\documentclass[11pt,twoside]{amsart}
\usepackage{amssymb,amsmath,amsthm,mathtools,bm,mathrsfs}
\usepackage{indentfirst}
\usepackage{url}
\usepackage[noadjust]{cite}
\usepackage{microtype}
\usepackage{enumitem}
\usepackage{xcolor}

\allowdisplaybreaks[4]

\usepackage[colorlinks=true,hypertexnames=false,
linkcolor=blue,anchorcolor=red,citecolor=blue,urlcolor=black,
linktocpage,pdfpagelabels,bookmarksnumbered,bookmarksopen]{hyperref}
\numberwithin{equation}{section}

\newtheorem{theorem}{Theorem}[section]
\newtheorem{proposition}[theorem]{Proposition}
\newtheorem{lemma}[theorem]{Lemma}
\newtheorem{corollary}[theorem]{Corollary}
\newtheorem{conjecture}[theorem]{Conjecture}
\theoremstyle{definition}

\theoremstyle{remark}
\newtheorem{remark}[theorem]{Remark}
\newtheorem{question}[theorem]{Question}

\newcommand{\curl}{\nabla\times}
\newcommand{\curlname}{\mathrm{curl}}
\DeclareMathOperator{\diver}{div}
\DeclareMathOperator{\dist}{dist}
\DeclareMathOperator{\sech}{sech}
\newcommand{\R}{\mathbb{R}}
\newcommand{\Hh}{\mathbb{H}}
\newcommand{\Tclass}{\mathcal{T}}
\newcommand{\Sclass}{\mathcal{S}}
\newcommand{\Wspace}{\mathcal{W}}
\newcommand{\Mman}{\mathcal{M}}
\newcommand{\dd}{\,\mathrm{d}}
\newcommand{\e}{\mathrm{e}}
\newcommand{\norm}[2]{\left\lVert #1\right\rVert_{#2}}
\newcommand{\abs}[1]{\left|#1\right|}
\newcommand{\ip}[2]{\left\langle #1,#2\right\rangle}
\newcommand{\Dcurlhalf}{\mathcal{D}^{1,3/2}(\curlname;\R^3)}
\newcommand{\eps}{\varepsilon}

\title[Critical curl extremals and the hyperbolic plane]{Critical curl extremals and the hyperbolic plane}
\author[J. Mederski]{Jarosław Mederski}
\address{J. Mederski\newline\indent
Institute of Mathematics, Polish Academy of Sciences, \newline\indent
ul.~Śniadeckich 8, 00-656 Warsaw, Poland, \newline\indent
and Faculty of Mathematics and Computer Science, \newline\indent
Nicolaus Copernicus University, ul. Chopina 12/18, 87-100 Toruń, Poland}
\email{\href{mailto:jmederski@impan.pl}{jmederski@impan.pl}}
\subjclass[2020]{Primary 35J92; Secondary 35A23, 35Q60, 35R01, 46E35}
\keywords{Sobolev--curl inequality, critical curl--curl equation,
hyperbolic plane, nonlinear duality, semi-explicit extremals,
gauge-canonical inverse iteration, sharp constants, Dirac zero modes}
\date{}
\hypersetup{pdfauthor={Jarosław Mederski},
pdftitle={Critical curl extremals and the hyperbolic plane},
pdfsubject={Critical curl inequalities and the hyperbolic plane},
pdfkeywords={Sobolev-curl inequality, critical curl-curl equation,
hyperbolic plane, nonlinear duality, semi-explicit extremals,
gauge-canonical inverse iteration, sharp constants, Dirac zero modes}}

\begin{document}
\begin{abstract}
We study the critical Sobolev--curl quotient on $\R^3$ at the
conformally invariant exponent $p=3/2$.  Its sharp constant and
optimizers were posed as open problems in the literature.  We identify
the toroidal and poloidal extremals.  Two structural facts are used.
First, the critical curl--curl equation is self-dual: the map
$u\mapsto|\nabla\times u|^{-1/2}\nabla\times u$ is an involution on
finite-energy solutions.  It is the self-dual case of a duality between
the exponents $p$ and $3p/(4p-3)$, which exchanges toroidal and poloidal
fields and relates the sharp constants.  Second, the toroidal minimization
problem reduces exactly to a subcritical $3/2$-Laplace problem on the
hyperbolic plane.  Combined with symmetry and shooting results, these
facts show that the two restricted sharp constants coincide.  They also
classify the gauge-minimal extremals: these are unique up to scaling and
the Euclidean transformations realizing hyperbolic isometries,
semi-explicit, and dual to each other.  For nonzero one-signed profiles
radial about a fixed hyperbolic centre, a gauge-canonical inverse of the
duality yields an iteration by one-dimensional quadratures: the quotients
are nonincreasing and converge to the restricted sharp constant, while
the normalized iterates converge to the extremal pair.  The
first step from an explicit competitor gives the new upper bound
$96\sqrt{15}\pi/(5\sqrt{4455\pi-13568})\approx11.2948$ for the
restricted constants.  The unrestricted sharp constant remains open.
As an application, bounded axisymmetric poloidal potentials admit no
$L^2$ zero mode of the Weyl--Dirac operator.  An explicit conformal
transformation extends this exclusion to both extremals at every real
coupling.
\end{abstract}
\maketitle

\section{Introduction}\label{sec:introduction}

The Sobolev inequality controls a function in terms of its gradient.
For vector fields, replacing the gradient by the curl introduces an
additional difficulty: the kernel of $\curl$ is infinite-dimensional,
since it contains all gradients.  A Sobolev--curl inequality can
therefore control a field only modulo this kernel, and the denominator
of the Sobolev quotient has to be minimized over a gauge class.  For
$p=2$ such a gauge-corrected inequality was first established by the
author and Szulkin \cite{MederskiSzulkin2021}.

We focus on the exponent $p=3/2$, at which the quotient is
conformally invariant, as in the work of Frank and Loss
\cite{FrankLossOptimizers2022,FrankLossSharp2026,FrankLossZeroMode2022}.
For vector fields $u:\R^3\to\R^3$ we consider the quotient
\begin{equation}\label{eq:field-quotient}
Q(u)=\frac{\displaystyle\int_{\R^3}|\curl u|^{3/2}\dd x}
{\displaystyle\left(\inf_{\curl w=0,\ w\in L^3(\R^3,\R^3)}
	\int_{\R^3}|u+w|^3\dd x\right)^{1/2}}
\ge S_{3/2,\curlname}>0,
\end{equation}
where $S_{3/2,\curlname}$ is the sharp Sobolev--curl constant and the
inequality holds for every $u\in L^3(\R^3;\R^3)$ with
$\curl u\in L^{3/2}(\R^3;\R^3)\setminus\{0\}$; see
\cite{FrankLossOptimizers2022,FrankLossSharp2026,MederskiSzulkin2025,FrankLossZeroMode2022}.
Fields in $L^3(\R^3,\R^3)$ whose distributional curl belongs to
$L^{3/2}(\R^3,\R^3)$ are called \emph{finite-energy} fields.

Symmetry classes play a central role in what follows.  In cylindrical
coordinates $x=(\rho\cos\theta,\rho\sin\theta,z)$, $\rho>0$, we use the
Euclidean orthonormal frame
\[
 (e_\rho,e_\theta,e_3)
 =\bigl((\cos\theta,\sin\theta,0),\,
        (-\sin\theta,\cos\theta,0),\,
        (0,0,1)\bigr),
\]
pointing in the radial, azimuthal and axial directions.  Let $\Tclass$
and $\Sclass$ denote, respectively, the \emph{toroidal} and
\emph{poloidal} classes of finite-energy fields of the form
\[
 A(x)=a(\rho,z)e_\theta,
 \qquad
 U(x)=b(\rho,z)e_\rho+c(\rho,z)e_3
\]
for scalar functions $a,b,c$.  The restricted sharp constants are
\[
 S_{3/2,\curlname}^{\Tclass}
 :=\inf_{\substack{A\in\Tclass\\\curl A\ne0}}Q(A),
 \qquad
 S_{3/2,\curlname}^{\Sclass}
 :=\inf_{\substack{U\in\Sclass\\\curl U\ne0}}Q(U),
\]
and, since both classes are contained in the unrestricted admissible
class,
\[
 0<S_{3/2,\curlname}
 \le \min\bigl\{S_{3/2,\curlname}^{\Tclass},
                 S_{3/2,\curlname}^{\Sclass}\bigr\}.
\]
Whether equality holds is the analogue of the ground-state symmetry
question raised for $p=2$ in \cite[Section~7, (P3)]{MederskiSzulkin2021}.

Frank and Loss posed the problem of determining the sharp constant and
the optimizing fields in the critical vector inequality
\cite[Section~9, pp.~24--25]{FrankLossZeroMode2022}; see also
\cite[p.~311, equation~(8)]{FrankLossSharp2026}.
Their attainment theorem \cite[Theorem~1.1]{FrankLossOptimizers2022}
does not identify the optimizing fields.  The author and Szulkin
\cite{MederskiSzulkin2025} proved attainment for general $1<p<3$, also
within the classes $\Tclass$ and $\Sclass$.  An explicit upper bound is
furnished by the field
\[
 A_{\Tclass}(x)=\frac{3}{(1+|x|^2)^2}(-x_2,x_1,0),
\]
a combination of the Loss--Yau potential $A_{\mathrm{LY}}$
\cite{LossYau1986} and its reflection.  Here
$A_{\mathrm{LY}}=A_{\mathrm{LY},e_3}$ belongs to the family
\[
 A_{\mathrm{LY},w}(x)
 =\frac{3}{(1+|x|^2)^2}
 \bigl((1-|x|^2)w+2(w\cdot x)x+2w\times x\bigr),
 \qquad w\in\R^3\setminus\{0\},
\]
which solves equation
\eqref{eq:intro-PDE} below for $|w|=4/3$, as proved by the author and Szulkin
\cite[Theorem~1.3(d)]{MederskiSzulkin2025}; cf.\
\cite[Section~9, (9.4)--(9.5)]{FrankLossZeroMode2022}.
Bieganowski and the author
\cite[Theorem~1.1 and Section~6]{BieganowskiMederski2026} computed
\[
 S_{3/2,\curlname}^{\Tclass}
 \le Q(A_{\Tclass})
 =\frac{8\sqrt{10}}7\pi
 <4\pi=Q(A_{\mathrm{LY}}),
\]
which gives a negative answer to the axisymmetric minimization question
posed by Wang and Zhang \cite[Section~6.2]{WangZhangConstants2026}.
The sharp helicity inequality of Wang and Zhang
\cite{WangZhangSharp2026} has a different denominator and does not
determine the gauge-minimized constant considered here.

We identify the toroidal and poloidal extremals of the critical
Sobolev--curl quotient through an exact hyperbolic representation, and
construct them by a convergent gauge-canonical inverse iteration.  We do
not determine the unrestricted sharp constant: whether the restricted
extremals are global minimizers remains open, and
Section~\ref{sec:open-problems} states the corresponding conjecture
together with the available evidence.

\subsection{Two structural strategies of this work}

A minimizer, taken in its $L^3$-minimal gauge and multiplied by a
suitable positive constant, satisfies the Euler--Lagrange equation
\begin{equation}\label{eq:intro-PDE}
\curl\bigl(|\curl u|^{-1/2}\curl u\bigr)=|u|u.
\end{equation}
The first fact is that \eqref{eq:intro-PDE} is \emph{self-dual}: if
$u$ is a finite-energy weak solution, then its dual field
\[
\mathfrak D(u):=|\curl u|^{-1/2}\curl u
\]
is also a finite-energy weak solution, and
$\mathfrak D(\mathfrak D(u))=u$; the corresponding pair of dual
equations already appears in \cite[Section~9]{FrankLossZeroMode2022}.  Thus
$\mathfrak D$ is an involution on the set of finite-energy weak
solutions; the new feature observed here is that it exchanges toroidal
and poloidal solutions.
Section~\ref{sec:framework} shows that this is the self-dual case of a
duality between the critical curl--curl equations with exponents $p$ and
$\widehat p=3p/(4p-3)$, valid for all $1<p<3$, which interchanges the
two cylindrical classes and relates the sharp constants by
$S_{p,\curlname}^{1/p}=S_{\widehat p,\curlname}^{1/\widehat p}$
(Theorems~\ref{thm:general-p-duality}
and~\ref{thm:general-p-constant-duality}).  At $p=2$ it yields the new
sharp inequality
$\|v\|_2\le S_{2,\curlname}^{-1/2}\|\curl v\|_{6/5}$ for
divergence-free fields, with optimizers obtained from $p=2$ ground
states by taking curl and multiplying by a nonzero constant
(Corollary~\ref{cor:p-two-dual}).

The second new observation concerns the toroidal class.  We write
$A(\psi)=\rho^{-1}\psi(\rho,z)e_\theta$ for the toroidal field
associated with a scalar function $\psi$ on the half-plane
$\{(\rho,z):\rho>0\}$, and we regard this half-plane as the half-plane
model of $\Hh^2$, with metric and area element
\[
 g=\frac{\dd\rho^2+\dd z^2}{\rho^2},
 \qquad
 \dd V_g=\frac{\dd\rho\,\dd z}{\rho^2},
 \qquad
 |\nabla_g\psi|_g=\rho\,|\nabla\psi|.
\]
Integration in the azimuthal variable gives the exact identities
\[
 \int_{\R^3}|A(\psi)|^3\dd x
 =2\pi\int_{\Hh^2}|\psi|^3\dd V_g,
 \qquad
 \int_{\R^3}|\curl A(\psi)|^{3/2}\dd x
 =2\pi\int_{\Hh^2}|\nabla_g\psi|_g^{3/2}\dd V_g,
\]
which Proposition~\ref{prop:T-H-correspondence} extends to the full
energy spaces.  For a general exponent the same computation produces
the weights $\rho^{3-2p}$ and $\rho^{3-p^*}$ in front of the hyperbolic
integrands; both are trivial exactly when $p=3/2$.  The {\em toroidal critical}
quotient is therefore $\sqrt{2\pi}$ times the quotient defining the
{\em subcritical problem}
\begin{equation}\label{eq:Lambda-H}
 \Lambda_{\Hh^2}
 :=\inf_{\psi\in\mathcal D^{1,3/2}(\Hh^2)\setminus\{0\}}
 \frac{\displaystyle\int_{\Hh^2}|\nabla_g\psi|_g^{3/2}\dd V_g}
 {\displaystyle\left(\int_{\Hh^2}|\psi|^3\dd V_g\right)^{1/2}}
 >0,
\end{equation}
where $\mathcal D^{1,3/2}(\Hh^2)$ is the completion of
$\mathcal C_c^\infty(\Hh^2)$ in the norm $\|\nabla_g\psi\|_{L^{3/2}(\Hh^2)}$;
the exponent $3$ lies below the critical exponent $(3/2)^*_2=6$.
In the notation of \cite[(1.2)]{DuttaSandeep2025} the constant is
$S_{0,3}$ with $n=2$ and gradient exponent $3/2$.  We do not know a
closed formula for it.  Writing
$\Delta_{3/2,g}\psi:=\diver_g(|\nabla_g\psi|_g^{-1/2}\nabla_g\psi)$, a
positive minimizer for $\Lambda_{\Hh^2}$, after multiplication by a
suitable positive constant, satisfies
\begin{equation}\label{eq:hyperbolic-PDE}
 -\Delta_{3/2,g}\psi=\psi^2
 \qquad\text{in }\Hh^2
\end{equation}
in the weak sense.  By \cite[Theorem~1.1]{DuttaSandeep2025} every
positive finite-energy solution is radial about some centre
$O\in\Hh^2$, and Theorem~\ref{thm:hyperbolic-profile-uniqueness} below
identifies it, via a weighted shooting classification, as the unique
rapidly decaying member of a one-parameter family: all positive
finite-energy solutions have the same radial profile $\psi_*$, with
only the centre varying.

\subsection{Main results}

Hyperbolic isometries moving the centre correspond to dilations and
axial translations of $\R^3$.  For $\lambda>0$ and $z_0\in\R$ put
\begin{equation}\label{eq:euclidean-moves}
 (T_{\lambda,z_0}u)(x):=\lambda\,u\bigl(\lambda(x-z_0e_3)\bigr),
\end{equation}
the natural action on one-forms; $Q$, equation \eqref{eq:intro-PDE}
and the two classes are invariant under these maps, and
$T_{\lambda,z_0}$ moves a toroidal field with profile radial about
$(1,0)$ to one with profile radial about $(1/\lambda,z_0)$.
To state the iteration, let $\Gamma(u)$ be the unique representative
of the gauge class of $u$ with minimal $L^3$ norm, and let $I(u)$ be
the $L^3$-minimal field whose curl is $|\Gamma(u)|\Gamma(u)$;
equivalently,
\[
 I(u)=\Gamma\!\left(\curl(-\Delta)^{-1}
                 (|\Gamma(u)|\Gamma(u))\right).
\]
The source is divergence-free, because the Euler--Lagrange condition
for gauge minimization is $\diver(|\Gamma(u)|\Gamma(u))=0$.  Thus
$\mathfrak D(I(u))=\Gamma(u)$: on solutions $I$ agrees with
$\mathfrak D$, whereas for general fields with nonzero curl it satisfies
$Q(I(u))\le Q(u)$ (Lemma~\ref{lem:canonical-inverse-descent}).

The first main result reads as follows.
\begin{theorem}\label{thm:main-curl}
Let $\psi_*$ be the positive minimizer for $\Lambda_{\Hh^2}$ centred at
$(1,0)$ and normalized to solve \eqref{eq:hyperbolic-PDE}, and put
\[
 A_*:=\rho^{-1}\psi_*e_\theta,
 \qquad U_*:=\mathfrak D(A_*).
\]
\begin{enumerate}[label=\textup{(\alph*)},leftmargin=*]
\item \textup{(Sharp constants.)}
$0<S_{3/2,\curlname}\le S_{3/2,\curlname}^{\Tclass}
 =S_{3/2,\curlname}^{\Sclass}=\sqrt{2\pi}\,\Lambda_{\Hh^2}$.
\item \textup{(Classification of the extremals.)}
$A_*\in\Tclass$ and $U_*\in\Sclass$ are extremals of the respective
restricted constants, solve \eqref{eq:intro-PDE}, and satisfy
$\mathfrak D(U_*)=A_*$.  Every extremal of
$S_{3/2,\curlname}^{\Tclass}$ is of the form $cT_{\lambda,z_0}A_*$, and
every extremal of $S_{3/2,\curlname}^{\Sclass}$ is of the form
$cT_{\lambda,z_0}U_*+w$, with $c\in\R\setminus\{0\}$, $\lambda>0$,
$z_0\in\R$ and $w\in\Sclass$ with $\curl w=0$.
\item \textup{(Convergence to the two-cycle.)}
Let $\psi\in\mathcal D^{1,3/2}(\Hh^2)$ be nonzero, of one sign and
radial about a centre $O'=(1/\lambda,z_0)$, and let $u_0=A(\psi)$,
$u_{n+1}=I(u_n)$, $v_n=u_n/\|u_n\|_{L^3}$.  Then $Q(u_n)$ is
nonincreasing, $Q(u_n)\to S_{3/2,\curlname}^{\Tclass}$, and
\[
 v_{2k}\longrightarrow\sigma\,\frac{T_{\lambda,z_0}A_*}{\|A_*\|_{L^3}},
 \qquad
 v_{2k+1}\longrightarrow\sigma\,\frac{T_{\lambda,z_0}U_*}{\|U_*\|_{L^3}}
 \qquad\text{in }L^3(\R^3;\R^3),
\]
\[
 \curl v_{2k}\longrightarrow\sigma\,
 \frac{\curl(T_{\lambda,z_0}A_*)}{\|A_*\|_{L^3}},
 \qquad
 \curl v_{2k+1}\longrightarrow\sigma\,
 \frac{\curl(T_{\lambda,z_0}U_*)}{\|U_*\|_{L^3}}
 \qquad\text{in }L^{3/2}(\R^3;\R^3)
\]
as $k\to\infty$, where $\sigma=1$ if $\psi\ge0$ and $\sigma=-1$ if
$\psi\le0$.
\end{enumerate}
\end{theorem}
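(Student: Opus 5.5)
The proof assembles the structural results announced in the introduction. For (a) and the toroidal half of (b), we first use Proposition~\ref{prop:T-H-correspondence}. Every toroidal field is automatically divergence-free and $L^3$-gauge minimal, since $\diver(|A|A)=0$ for $A=a(\rho,z)e_\theta$. The correspondence $\psi\mapsto A(\psi)$ is an isometric bijection of the energy spaces, with $Q(A(\psi))=\sqrt{2\pi}$ times the hyperbolic quotient. Hence $S^{\Tclass}_{3/2,\curlname}=\sqrt{2\pi}\,\Lambda_{\Hh^2}$, and toroidal extremals correspond exactly to minimizers of $\Lambda_{\Hh^2}$.

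A minimizer $\psi$ may be replaced by $|\psi|$ without changing the quotient. The strong maximum principle for the $3/2$-Laplacian then shows that minimizers are of one sign. By \cite[Theorem~1.1]{DuttaSandeep2025} and Theorem~\ref{thm:hyperbolic-profile-uniqueness}, a positive minimizer is, after scaling, $\psi_*$ composed with a hyperbolic isometry. Moreover, only isometries moving the centre matter, because $\psi_*$ is radial. These isometries are realized by $T_{\lambda,z_0}$. This gives the form $cT_{\lambda,z_0}A_*$.

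For the poloidal constant, we apply the duality of Theorem~\ref{thm:general-p-duality} at $p=3/2$, together with the gauge-canonical inverse $I$ and Lemma~\ref{lem:canonical-inverse-descent}. The key structural fact is that $I$ maps $\Tclass\to\Sclass$ and $\Sclass\to\Tclass$, since curl exchanges the two classes. Given $U\in\Sclass$, Lemma~\ref{lem:canonical-inverse-descent} yields $Q(I(U))\le Q(U)$ with $I(U)\in\Tclass$, so $S^{\Tclass}\le S^{\Sclass}$. Symmetrically, $S^{\Sclass}\le S^{\Tclass}$, which proves (a).

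Next we show that $U_*=\mathfrak D(A_*)$ is a poloidal extremal. Since $A_*$ solves \eqref{eq:intro-PDE}, $U_*$ solves it too, and $\mathfrak D(U_*)=A_*$. For solutions, testing the equation gives $\int|\curl u|^{3/2}=\int|u|^3$. Applying the identity $|\mathfrak D(u)|^3=|\curl u|^{3/2}$ to both $A_*$ and $U_*$ shows that all four integrals coincide, so $Q(U_*)=Q(A_*)$.

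For uniqueness of poloidal extremals, let $U$ be an extremal. Then $\Gamma(U)$, suitably scaled, solves \eqref{eq:intro-PDE}. Its dual $\mathfrak D(\Gamma(U))$ is a toroidal solution with the same quotient, hence a toroidal extremal of the form $cT_{\lambda,z_0}A_*$. Applying $\mathfrak D$ once more and using that $\mathfrak D$ is an involution recovers $\Gamma(U)$ as $c'T_{\lambda,z_0}U_*$. The difference $U-\Gamma(U)$ is a curl-free poloidal field $w$. Here we need the equivariance $\mathfrak D\circ T_{\lambda,z_0}=T_{\lambda,z_0}\circ\mathfrak D$ and the homogeneity $\mathfrak D(cu)=\operatorname{sgn}(c)|c|^{1/2}\mathfrak D(u)$, both of which are direct checks.

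For (c), the iterates alternate between $\Tclass$ and $\Sclass$, and $Q(u_n)$ is nonincreasing by the descent lemma. We next show that $I$ preserves the properties ``one-signed'' and ``radial about $O'$'' along the toroidal steps. Radiality about the centre is preserved because $I$ commutes with the Euclidean transformations realizing the isometries that fix $O'$. Those isometries do not all act isometrically on $\R^3$, but conformal invariance at $p=3/2$ should handle this; alternatively, one can use the reduction to one-dimensional quadratures. Sign preservation should come from a positivity-preserving Green's function for the two-step toroidal map. I expect this sign and radiality invariance to be the main obstacle; the first attempt would be to write $I\circ I$ on $\Tclass$ as a composition of positive hyperbolic integral operators.

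Given the invariance, the normalized toroidal iterates $\psi_{2k}$ are bounded in $\mathcal D^{1,3/2}(\Hh^2)$. Radial functions about a fixed centre enjoy compactness: a Strauss-type decay lemma gives compact embedding into $L^3(\Hh^2)$, since $3$ is subcritical. Hence a subsequence converges weakly in the energy space and strongly in $L^3$. The limiting quotient $\lim Q(u_n)$ is then attained, and the descent becomes equality in the limit. The equality case in Lemma~\ref{lem:canonical-inverse-descent} forces the limit to be a fixed point of $I\circ I$ up to scaling, that is, a solution. By the uniqueness above it is the positive (or negative) $\psi_*$ centred at $O'$, with sign $\sigma$. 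The limit is therefore $S^{\Tclass}$ and is independent of the subsequence, so the whole sequence converges. Strong $L^{3/2}$ convergence of the curls follows from convergence of the norms together with uniform convexity. The odd iterates converge because $I$ is continuous on these sets.
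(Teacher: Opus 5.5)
\textbf{The gap is in part (c).} You never prove the step you yourself flag: that the two-step map preserves ``nonnegative, nonzero and radial about $O'$''. This step is indispensable. Lemma~\ref{lem:fixed-centre-radial-compactness} gives compactness only for profiles radial about one fixed centre; without it, mass can escape along hyperbolic translations. Likewise, Theorem~\ref{thm:hyperbolic-profile-uniqueness} identifies the limit only if it is positive and centred at $O'$.

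Neither of your suggested mechanisms works as stated. No map $T_{\lambda,z_0}$ other than the identity fixes $O'$. Hyperbolic rotations about $O'$ are realized only by M\"obius transformations of $\R^3$, some of which send a point to infinity, and checking that $\Gamma$ and $\mathcal K$ commute with them would be substantial unproved work. Moreover, the two-step map is nonlinear, so there is no Green's function whose positivity you could use.

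The missing idea is to pass to the scalar problem. Since $\curl u_{2k+1}=|u_{2k}|u_{2k}$ and $\mathfrak D(u_{2k+2})=u_{2k+1}$, Proposition~\ref{prop:T-PDE-H} shows that the profile $\phi_{k+1}$ of the next even iterate is a weak solution in $\mathcal D^{1,3/2}(\Hh^2)$ of $-\Delta_{3/2,g}\phi_{k+1}=c_k\phi_k^2$, with a constant $c_k>0$. The operator $-\Delta_{3/2,g}$ is strictly monotone and coercive on $\mathcal D^{1,3/2}(\Hh^2)$, and $\phi_k^2\in L^{3/2}(\Hh^2)$ lies in its dual. Hence this solution is unique. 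Testing with $\phi_{k+1}^-$ gives $\phi_{k+1}\ge0$. Rotations about $O'$ are isometries of $\Hh^2$ that preserve the scalar equation and the inductively radial source intrinsically, so uniqueness yields radiality with no realization in $\R^3$ needed. The quadrature \eqref{eq:inverse-radial-quadrature} then gives $\phi_{k+1}>0$.

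\textbf{The rest is sound.} Your part (a) takes a different and more elementary route than the paper. You get $S^{\Tclass}_{3/2,\curlname}=S^{\Sclass}_{3/2,\curlname}$ directly from the descent inequality and the class interchange of $I$. The paper instead uses attainment of both restricted constants (the Mederski--Szulkin existence theorem) together with Theorem~\ref{thm:general-p-constant-duality}. Your argument needs no existence theory.

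In (b) you assert without justification that a rescaled $\Gamma(U)$ solves the full equation; the paper cites the attainment theorem for this. In your own framework it follows from the equality case of Lemma~\ref{lem:canonical-inverse-descent}. Indeed, $S^{\Tclass}_{3/2,\curlname}\le Q(I(U))\le Q(U)=S^{\Sclass}_{3/2,\curlname}=S^{\Tclass}_{3/2,\curlname}$ puts $U$ on a projective two-cycle, which makes $I(U)$ and $\Gamma(U)$ solutions after positive rescaling.

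Similarly, in (c) the claim that ``the descent becomes equality in the limit'' needs the $L^3$-continuity of $I$, which you invoke only for the odd iterates. Continuity gives $Q(I(v))=\lim_k q_{2k+1}=L$ for the limit $v$. Weak lower semicontinuity, together with $\|v\|_{L^3}=1$ and $\Gamma(v)=v$, gives $Q(v)\le L$. Hence $Q(I(v))=Q(v)$, and the equality case applies to $v$ itself. Completed this way, your vector-level limit argument is a valid substitute for the paper's scalar H\"older and monotonicity argument.
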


The iteration is computable: for a nonnegative radial profile, one
full cycle $\Tclass\to\Sclass\to\Tclass$ amounts to two one-dimensional
quadratures, $I^2(A(\psi))=A(\mathcal R\psi)$ with $\mathcal R$ given in
\eqref{eq:inverse-radial-quadrature}, and each iterate is an admissible
competitor, so that every $Q(u_n)$ is a rigorous upper bound for the
restricted constant.  The first step from $A_{\Tclass}$ can be
evaluated in closed form.

\begin{corollary}\label{cor:explicit-bound}
\[
 S_{3/2,\curlname}^{\Tclass}
 <Q(I(A_{\Tclass}))
 =\frac{96\sqrt{15}\,\pi}{5\sqrt{4455\pi-13568}}
 <\frac{8\sqrt{10}}7\pi=Q(A_{\Tclass}).
\]
\end{corollary}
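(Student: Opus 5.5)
The plan is to compute $I(A_{\Tclass})$ explicitly and then evaluate its quotient in closed form.

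\emph{Step 1: the source.} $A_{\Tclass}=\rho^{-1}\psi_0 e_\theta$ with $\psi_0=3\rho^2/(1+\rho^2+z^2)^2$, and it is already $L^3$-minimal in its gauge class. Indeed, toroidal fields are divergence-free, and $|A|A$ is toroidal, hence divergence-free, which is exactly the gauge-minimality condition. So $\Gamma(A_{\Tclass})=A_{\Tclass}$, and the source is
\[
|A_{\Tclass}|A_{\Tclass}=\frac{9\rho^2}{(1+|x|^2)^4}\,e_\theta ,
\]
with $\rho=|x'|$ the cylindrical radius.

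\emph{Step 2: solve $\curl V=|A|A$.} I would take $V=\curl(-\Delta)^{-1}(|A|A)$ and use the Stokes stream-function ansatz for a poloidal field, $V=\rho^{-1}(-\partial_z\chi\, e_\rho+\partial_\rho\chi\, e_3)$. Its curl is $-\rho^{-1}\mathcal L\chi\, e_\theta$, where $\mathcal L\chi=\chi_{\rho\rho}-\rho^{-1}\chi_\rho+\chi_{zz}$. This reduces Step 2 to solving
\[
\mathcal L\chi=-\frac{9\rho^3}{(1+|x|^2)^4}.
\]
Because the source is a rational function of $r^2=|x|^2$ times $\rho^2$, I would try $\chi=\rho^2 f(r)$. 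The equation then becomes a radial ODE for the $\ell=1$ vector harmonic, and integrating it by two quadratures should give an elementary $f$, possibly involving $\arctan r$.

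Next I check that $V$ is the $L^3$-minimal representative, i.e.\ that $\diver(|V|V)=0$. This can fail, since $|V|$ is not constant along the flow. If it fails, I fall back on the radial quadrature formula \eqref{eq:inverse-radial-quadrature}. The profile $\psi_0$ is radial about $(1,0)$ in $\Hh^2$, since $(1+\rho^2+z^2)/\rho$ is a function of the hyperbolic distance to $(1,0)$. The paper already asserts that $I(A(\psi))$ for such profiles is given by one-dimensional quadratures in the hyperbolic radial variable, and I would use that structure. A plausible route is that the gauge-minimal poloidal field is $\mathfrak D$ of a toroidal field whose profile is radial. Then $I(A_{\Tclass})=U$ is characterized by requiring $\mathfrak D(U)$ to be the toroidal field whose curl relation matches, and gauge-minimality is automatic by Lemma~\ref{lem:canonical-inverse-descent}-type structure.

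\emph{Step 3: evaluate the quotient.} I would compute $Q(I(A_{\Tclass}))$ using $\curl I(A_{\Tclass})=|A|A$. The numerator is then
\[
\int|A|^3\dd x=2\pi\int_{\Hh^2}\psi_0^3\dd V_g,
\]
which is an elementary integral; in hyperbolic polar coordinates it is a beta integral. The denominator is $\|I(A_{\Tclass})\|_3^{3/2}$. This is the only non-elementary piece, and the appearance of $4455\pi-13568$ suggests it produces $\pi$ from an $\arctan$ term. I would reduce it via the hyperbolic-radial quadrature to a single integral in the distance variable $t$ and evaluate it symbolically, expecting a combination of rational numbers and $\pi$.

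\emph{Step 4: the inequalities.} The right inequality is a numerical comparison of explicit constants: $11.2948<8\sqrt{10}\pi/7\approx11.357$. The weak form $\le$ also follows from Lemma~\ref{lem:canonical-inverse-descent}. For the left strict inequality, the restricted infimum is $\le Q$ of every iterate, so I argue by contradiction. If equality held, $I(A_{\Tclass})$ would be a poloidal extremal. By Theorem~\ref{thm:main-curl}(b), after normalization it would then solve \eqref{eq:intro-PDE}, and applying $\mathfrak D$ would make $A_{\Tclass}$ a multiple of $T_{\lambda,z_0}A_*$. That is impossible, since the Loss--Yau type profile $\psi_0$ does not solve \eqref{eq:hyperbolic-PDE}; one can check this directly by substitution. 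Alternatively, the descent lemma's equality case gives the contradiction.

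The main obstacle is Step 3, the closed-form evaluation of $\|I(A_{\Tclass})\|_3$, together with confirming the correct gauge.
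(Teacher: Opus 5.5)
You have the architecture right: toroidal fields are gauge-minimal, the numerator is $E(I(A_{\Tclass}))=\int|A_{\Tclass}|^3\dd x$, and the denominator needs the gauge-minimal primitive. Your left-hand strict inequality also works. Since $\mathfrak D(I(A_{\Tclass}))=\Gamma(A_{\Tclass})=A_{\Tclass}$, part (b) would force $-\Delta_{3/2,g}\psi_0$ to be a multiple of $\psi_0^2$. This is false, because $-\Delta_{3/2,g}\psi_0=(3/2)^{3/2}\sech^3t\,\tanh^{-1/2}t$ while $\psi_0^2=\frac9{16}\sech^4t$. The paper instead applies the equality case of Lemma~\ref{lem:canonical-inverse-descent} twice: $q_1=S$ gives $q_2=q_1$, then $u_3=cu_1$, then $u_2=\sqrt c\,u_0$, so $q_2=q_0>q_1$.

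The gap is the middle equality, which is the actual content of the corollary and which you leave as ``evaluate symbolically''. Your primary route in Step~2 cannot supply it. The field $\curl(-\Delta)^{-1}(|A_{\Tclass}|A_{\Tclass})$ is divergence-free. The gauge-minimal primitive, by contrast, is $G(\eta)s^{-2}N$ with $G>0$. Since $\nabla\eta\cdot N=0$ and $\diver(N/s^2)=2z/s^2$, its divergence is $2zG(\eta)/s^2\neq0$. By uniqueness of the $L^3$-minimizer, the Coulomb-gauge field therefore has strictly larger $L^3$ norm and gives the wrong denominator. Fixing the gauge by hand would mean solving a nonlinear elliptic equation.

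The fallback you only gesture at has to be made precise, as follows.
\begin{itemize}
\item Let $\chi$ solve $-\Delta_{3/2,g}\chi=\psi_0^2$ on $\Hh^2$.
\item By Proposition~\ref{prop:T-PDE-H}, $\curl\mathfrak D(A(\chi))=\rho^{-2}\psi_0^2e_\theta=|A_{\Tclass}|A_{\Tclass}$.
\item Also $|\mathfrak D(A(\chi))|\,\mathfrak D(A(\chi))=\curl A(\chi)$ is divergence-free.
\item The uniqueness in Lemma~\ref{lem:canonical-inverse-descent} then gives $I(A_{\Tclass})=\mathfrak D(A(\chi))$.
\item Hence $\|I(A_{\Tclass})\|_3^3=\int|\curl A(\chi)|^{3/2}\dd x=4\pi^2\int_0^\infty\mathcal J_{\psi_0}(t)^3\sinh^{-2}t\dd t$.
\end{itemize}
Here $\mathcal J_{\psi_0}(t)=\sinh t\sqrt{-\chi'(t)}=\int_0^t\psi_0^2\sinh s\dd s=\frac3{16}(1-\sech^3t)$, and the flux constant is zero by finite energy. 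Substituting $\eta=\sech t$ turns the integral into $\frac{27}{4096}\int_0^1\eta(1-\eta^3)^3(1-\eta^2)^{-3/2}\dd\eta$. One integration by parts and the Wallis integrals $\int_0^1\eta^k(1-\eta^2)^{-1/2}\dd\eta$ for $k=2,5,8$ give $\frac{27}{4096}\bigl(\frac{891\pi}{256}-\frac{53}5\bigr)$. So the $\pi$ comes from these Wallis integrals, not from an arctangent. Inserting this and $m(\psi_0)=27/320$ into \eqref{eq:inverse-radial-quotients} yields the stated constant. The right-hand strict inequality then reduces to $\pi>349784/111375$. Note that $8\sqrt{10}\pi/7\approx11.3538$, not $11.357$.
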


Numerically, $S_{3/2,\curlname}^{\Tclass}\approx11.2615$
(Section~\ref{sec:numerics}), while the two bounds in
Corollary~\ref{cor:explicit-bound} are $11.2948$ and $11.3538$.
The inverse $I$ is iterated rather than $\mathfrak D$, although the
two maps agree on solutions: by
Proposition~\ref{prop:forward-breakdown}, the forward orbit
$\mathfrak D^j(A_{\Tclass})$ has increasing quotients and leaves the
energy space at $j=3$, whereas $I$ lowers the quotient at every step.

The proof of Theorem~\ref{thm:main-curl} is given in
Section~\ref{sec:proof-main}.  Part~\textup{(a)} combines the
hyperbolic representation with the duality; part~\textup{(b)} rests on
the uniqueness of the positive hyperbolic profile
(Theorem~\ref{thm:hyperbolic-profile-uniqueness}), which we derive from
the symmetry theorem of Dutta and Sandeep \cite{DuttaSandeep2025} and
the weighted shooting classification of Kawano, Yanagida and Yotsutani
\cite{KawanoYanagidaYotsutani1993}; part~\textup{(c)} uses the exact
descent formula for $I$, compactness at a fixed centre, and again the
uniqueness.  The profile $\psi_*$ is smooth away from its centre but
only $\mathcal C^{2,1}_{\mathrm{loc}}$ there
(Remark~\ref{rem:profile-regularity}), so the extremals are not smooth
across the circle $\{\rho=1,\ z=0\}$.

The reduction also determines the geometry of the poloidal extremal.
With $s=1+|x|^2$ and $\eta=2\rho/s$, Theorem~\ref{thm:semi-explicit}
gives
\[
 U_*(x)=s^{-2}G_*(\eta)N(x),
 \qquad N(x)=(2x_1z,\,2x_2z,\,1+z^2-\rho^2),
 \qquad
 G_*(\eta)=\frac{2\mathcal J_{\psi_*}(t)}{1-\eta^2},
\]
where $t=\operatorname{arcosh}(1/\eta)$ and
$\mathcal J_{\psi_*}(t)=\int_0^t\psi_*(s)^2\sinh s\dd s$ is the radial
flux of $\psi_*$.  The formula is \emph{semi-explicit}: $N$ and $\eta$
are explicit, and the only remaining factor is computed from the
one-dimensional profile; $G_*$ extends continuously to $[0,1]$ with
$G_*(0)=2\mathcal J_{\psi_*}(\infty)$ and $G_*(1)=\psi_*(0)^2$.

\subsection{Dirac zero modes}

The constant $S_{3/2,\curlname}$ is also relevant to magnetic Weyl--Dirac
operators $\mathcal D_A=\boldsymbol\sigma\cdot(-i\nabla-A)$, where
$\boldsymbol\sigma$ denotes the Pauli matrices.  A \emph{zero mode} is
a nonzero spinor $\Psi\in L^2(\R^3;\mathbb C^2)$ satisfying
$\mathcal D_A\Psi=0$.  The classical Loss--Yau potential
$A_{\mathrm{LY}}$ admits an explicit $L^2$ zero mode, so
$\ker_{L^2}\mathcal D_{A_{\mathrm{LY}}}\ne\{0\}$
\cite[Section~II]{LossYau1986}.  In contrast, we prove that our
semi-explicit extremals have trivial Dirac kernels at every coupling:
$\ker_{L^2}\mathcal D_{cA_*}=\ker_{L^2}\mathcal D_{cU_*}=\{0\}$ for all
$c\in\R$; see Theorem~\ref{thm:extremals-no-zero}.
Frank and Loss \cite[Theorem~1 and Remark~2]{FrankLossSharp2026} showed
that a zero mode requires
\begin{equation}\label{eq:zeromodecon}
\inf_{\nabla\chi\in L^3(\R^3;\R^3)}
\norm{A+\nabla\chi}{L^3}^2\ge3S_{\mathrm{Sob},2},
\qquad S_{\mathrm{Sob},2}=3(\pi/2)^{4/3},
\end{equation}
a sharp condition on the potential modulo gauge.  Combined with the
Sobolev--curl inequality
$\norm{\curl A}{L^{3/2}}\ge S_{3/2,\curlname}^{2/3}
 \inf_{\nabla\chi}\norm{A+\nabla\chi}{L^3}$, it gives
$\norm{\curl A}{L^{3/2}}\ge S_{3/2,\curlname}^{2/3}\sqrt{3S_{\mathrm{Sob},2}}$
for a zero mode, which is strictly below the sharp field-norm threshold
$\norm{\curl A}{L^{3/2}}\ge4S_{\mathrm{Sob},2}$ of Wang and Zhang
\cite[Theorem~1.6]{WangZhangSharp2026}, stated on $\mathbb S^3$ and
transferred to $\R^3$ by conformal invariance, because
$S_{3/2,\curlname}<4\pi$; see also \cite{WangZhangZeroModes2026} and,
for the related spinorial Sobolev inequalities on $\mathbb S^n$ and
their stability, \cite{WangZhangStability2026}.
Their bound doubles the earlier field bound $2S_{\mathrm{Sob},2}$ of
\cite[Theorem~1.1]{FrankLossZeroMode2022} and complements, rather than
replaces, the gauge-invariant condition \eqref{eq:zeromodecon} on the
potential.  Such norm thresholds say nothing at large coupling, where
the following geometric exclusion takes over.

\begin{theorem}\label{thm:extremals-no-zero}
\begin{enumerate}[label=\textup{(\alph*)},leftmargin=*]
\item Let $B=b_\rho(\rho,z)e_\rho+b_z(\rho,z)e_3\in L^\infty(\R^3;\R^3)$
be a bounded axisymmetric poloidal potential.  Then
$\ker_{L^2}\mathcal D_{cB}=\{0\}$ for every $c\in\R$.
\item Let $A_*$ and $U_*$ be the extremals in
Theorem~\ref{thm:main-curl}.  For every $c\in\R$, $\lambda>0$ and
$z_0\in\R$,
\[
 \ker_{L^2}\mathcal D_{cT_{\lambda,z_0}A_*}
 =\ker_{L^2}\mathcal D_{cT_{\lambda,z_0}U_*}=\{0\}.
\]
\end{enumerate}
The operators have domain $H^1(\R^3;\mathbb C^2)$ in
$L^2(\R^3;\mathbb C^2)$.
\end{theorem}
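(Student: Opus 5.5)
We first prove (a) by separating the azimuthal angle, and then reduce (b) to (a), using a conformal transformation for the toroidal extremal.

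\emph{Part (a).} Since $B$ is axisymmetric, $\mathcal D_{cB}$ commutes with the total angular momentum $J_3=-i\partial_\theta+\tfrac12\sigma_3$. Because $B\in L^\infty$, the operator $\mathcal D_{cB}$ is closed on $H^1(\R^3;\mathbb C^2)$ and its $L^2$-kernel splits orthogonally into the eigenspaces of $J_3$. It therefore suffices to treat a spinor
\[
\Psi=\bigl(f(\rho,z)e^{im\theta},\,g(\rho,z)e^{i(m+1)\theta}\bigr),\qquad m\in\mathbb Z .
\]
The key point is that $\boldsymbol\sigma\cdot B$ has no $e_\theta$-component. Put $D_\rho=\partial_\rho-icb_\rho$ and $D_z=\partial_z-icb_z$, a $U(1)$ connection on the meridian half-plane. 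The zero-mode equation becomes the first-order system
\[
D_zf+\bigl(D_\rho+\tfrac{m+1}{\rho}\bigr)g=0,\qquad \bigl(D_\rho-\tfrac m\rho\bigr)f-D_zg=0 .
\]
The substitution $f=\rho^mF$, $g=\rho^{-m-1}G$ removes the singular terms. It gives a twisted Cauchy--Riemann system with weight $w=\rho^{-2m-1}$:
\[
D_zF=-wD_\rho G,\qquad D_\rho F=wD_zG .
\]
From this system I would derive an integrated identity. Pair the first equation with $\overline{D_\rho G}$ and the second with $\overline{D_zG}$, add, take imaginary parts, and integrate by parts against $\rho\,d\rho\,dz$. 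This should express a sum of squares such as $\int w(|D_\rho G|^2+|D_zG|^2)$ as a boundary contribution. That contribution should vanish because $\Psi\in H^1$ and $B$ is bounded, which controls the terms at $\rho=0$ and at infinity. Hence $F$ and $G$ would be covariantly constant and $L^2$, so they vanish.

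If this direct pairing does not close, my fallback is a reflection argument. Use the reflection $x_2\mapsto-x_2$, which preserves poloidal fields, combined with complex conjugation and $\sigma_2$, to build an antiunitary symmetry of $\mathcal D_{cB}$ that maps the $m$-sector to the $-m-1$-sector. Then compare the two sectors to force the kernel to be trivial. Making one of these two arguments actually close, with the boundary terms at $\rho=0$ justified for every $m\in\mathbb Z$, is the step I expect to be the main obstacle.

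\emph{Part (b), poloidal extremal.} First, $\mathcal D_{cT_{\lambda,z_0}B}$ is unitarily equivalent to $\lambda\,\mathcal D_{cB}$ via $\Psi\mapsto\lambda^{3/2}\Psi(\lambda(\cdot-z_0e_3))$, so it suffices to take $\lambda=1$ and $z_0=0$. By Theorem~\ref{thm:semi-explicit}, $U_*=s^{-2}G_*(\eta)N$, where $G_*$ is continuous on $[0,1]$ and $|N|\le s$. Hence $U_*\in L^\infty$, and since $U_*$ is axisymmetric poloidal, part (a) applies directly.

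\emph{Part (b), toroidal extremal.} Part (a) does not cover $A_*$, so I would use the conformal covariance of the Weyl--Dirac operator. Regard $\R^3\cup\{\infty\}$ as $\mathbb S^3$. The $z$-axis together with $\infty$, and the circle $\{\rho=1,\,z=0\}$, are linked great circles. A Möbius transformation $\Phi$ of $\R^3$ exchanging them can be written explicitly as an inversion composed with a rotation. It maps axisymmetric fields to axisymmetric fields about the new axis, and it sends the rotation about the $z$-axis to a rotation about the image circle.

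Consequently the pullback one-form $\Phi^*A_*$ has no azimuthal component about the new axis, so it is an axisymmetric poloidal field. Under $\Psi\mapsto|\det D\Phi|^{1/3}S_\Phi\,\Psi\circ\Phi$, zero modes of $\mathcal D_{cA_*}$ correspond to zero modes of $\mathcal D_{c\Phi^*A_*}$. This map preserves $L^2$ norms, and $H^1_{\mathrm{loc}}$ regularity is preserved away from the exceptional point, which is removable for $L^2$ zero modes. Thus it remains to check that $\Phi^*A_*$ is bounded. Near $\infty$, the inversion weight $|x|^{-2}$ is compensated by the decay $|A_*|\lesssim(1+|x|)^{-2}$. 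This decay follows from the exponential decay of $\psi_*$ in hyperbolic distance, together with $\psi_*(0)<\infty$ and $A_*=\rho^{-1}\psi_*e_\theta$ being bounded near the axis. Part (a) then finishes the proof.
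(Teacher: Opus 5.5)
\textbf{Part (a) has a genuine gap.} Your reduction to the system for $(f,g)$ is correct, and so is the substitution $f=\rho^mF$, $g=\rho^{-m-1}G$. But the identity you propose does not close, and this step is the heart of the theorem.

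By your equations, $D_zF\,\overline{D_\rho G}=-w|D_\rho G|^2$ and $D_\rho F\,\overline{D_zG}=w|D_zG|^2$ are both real. Taking imaginary parts therefore discards the $w$-terms altogether. To obtain $w(|D_\rho G|^2+|D_zG|^2)$ you must subtract and take real parts. Then $\mathrm{Re}(D_zF\,\overline{D_\rho G}-D_\rho F\,\overline{D_zG})$ equals a divergence plus a curvature term $c\,(\partial_zb_\rho-\partial_\rho b_z)\,\mathrm{Im}(F\bar G)$. This term has no sign, and it requires derivatives of $B$, which are not available for a merely bounded potential.

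The reflection fallback cannot work either. An antiunitary symmetry exchanging the sectors $m$ and $-m-1$ only pairs their kernels; it cannot make them vanish.

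The missing idea is a zeroth-order pairing. Multiply the first equation by $\rho\bar g$ and the second by $\rho\bar f$, subtract, and take real parts. Since $b_\rho,b_z$ are real, the potential drops out: $\mathrm{Re}(\bar gD_zf+\bar fD_zg)=\partial_z\mathrm{Re}(\bar gf)$ and $\mathrm{Re}(\bar gD_\rho g)=\frac12\partial_\rho|g|^2$. Integrating in $\dd\rho\dd z$ gives
\[
 \bigl(m+\tfrac12\bigr)\int_{\R}\int_0^\infty\bigl(|f|^2+|g|^2\bigr)\dd\rho\dd z=0,
\]
hence $f=g=0$. This is the paper's test of $H_mu_m=0$ against $\chi\sigma_2u_m$ in the rotating spin frame; the paper's half-integer $m$ is your $m+\frac12$.

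The axial cutoff terms are controlled by $\int_{\R^3}|\Psi|^2\rho^{-1}\dd x\le2\|\Psi\|_2\|\nabla\Psi\|_2$. Here $\Psi\in H^1$ follows from $\|\nabla\Psi\|_2=\|\mathcal D_0\Psi\|_2\le|c|\|B\|_\infty\|\Psi\|_2$. No derivative of $B$ is used anywhere.

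\textbf{Part (b) is mostly right but contains one false step.} Your treatment of $U_*$ and of $T_{\lambda,z_0}$ agrees with the paper, as does the M\"obius strategy for $A_*$. The false step is the claim that $\Psi\mapsto|\det D\Phi|^{1/3}S_\Phi\,\Psi\circ\Phi$ preserves $L^2$ norms. Changing variables, the squared $L^2$ norm of the image is $\int\lambda_{\Phi^{-1}}(y)|\Psi(y)|^2\dd y$. The conformal factor $\lambda_{\Phi^{-1}}$ of $\Phi^{-1}$ blows up like a multiple of $|y-y_0|^{-2}$ at $y_0=\Phi(\infty)$. At this spinor weight it is the $L^3$ norm that is invariant, not the $L^2$ norm. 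So the transformed zero mode is not automatically in $L^2$.

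The paper obtains $L^2$ membership from $\Psi\in H^1$, which holds because $A_*$ is bounded, together with Hardy's inequality. The same $L^2$ control near the pole is what allows you to remove the exceptional point from the weak equation.

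You should also make explicit why $\Phi^*A_*$ is axisymmetric about the $z$-axis. The map $\Phi$ conjugates rotations about the axis into conformal rotations about the circle. So you need $A_*=\psi_*(t)\dd\theta$ to be invariant under the conformal rotations about $\{\rho=1,z=0\}$, and this is exactly the radiality of $\psi_*$ about $(1,0)$.
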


Both extremals are in the minimizing gauge, so the Nehari identity
gives
$\inf_\chi\|cB+\nabla\chi\|_{L^3}^2
 =c^2(S_{3/2,\curlname}^{\Tclass})^{4/3}$ for $B\in\{A_*,U_*\}$, and
\eqref{eq:zeromodecon} excludes zero modes only for
$|c|<\sqrt{3S_{\mathrm{Sob},2}}/(S_{3/2,\curlname}^{\Tclass})^{2/3}
 \approx0.81$.  Theorem~\ref{thm:extremals-no-zero} excludes them at
every coupling: an arbitrarily large gauge-minimal norm does not
ensure a zero mode.  Elton \cite[Theorem~1.1]{Elton2018} proved a
geometric exclusion of this kind for smooth potentials whose magnetic
field is parallel to a conformal Killing field $X$ with
$X\cdot\curl X=0$; part~\textup{(a)} is a version of his angular
obstruction for bounded measurable poloidal potentials, which is what
the non-smooth extremals require, and part~\textup{(b)} reduces $A_*$
to part~\textup{(a)} by an explicit M\"obius map
(Section~\ref{subsec:no-zero}).

Fr\"ohlich, Lieb and Loss \cite{FrohlichLiebLoss1986} showed that
zero modes with finite magnetic-field energy lead to instability
of the one-electron atom in a self-generated magnetic field at
sufficiently large nuclear charge.  The absence of zero modes
established in Theorem~\ref{thm:extremals-no-zero} may therefore
provide a starting point for investigating stability within the
corresponding classes of magnetic potentials.

\subsection{Notation and organization}

Unless another centre is specified, $t$ denotes the hyperbolic
distance from $(\rho,z)=(1,0)$, so that $\eta=\sech t$.
The operators $\curl$ and $\diver$ act in the sense
of distributions.
For positive functions $f,h$, we write $f\asymp h$ when $f/h$ is
bounded above and below by positive constants in the specified limit,
and $f\sim h$ when $f/h\to1$.

Section~\ref{sec:framework} proves the general duality.
Section~\ref{sec:hyperbolic} develops the hyperbolic problem and proves
Theorem~\ref{thm:hyperbolic-profile-uniqueness}.
Section~\ref{sec:reduction} establishes the exact toroidal reduction.
Section~\ref{sec:duality} derives the semi-explicit poloidal extremal
and proves Theorem~\ref{thm:extremals-no-zero}.
Section~\ref{sec:iteration} defines the gauge-canonical inverse
and proves its descent property.
Section~\ref{sec:proof-main} contains the proof of
Theorem~\ref{thm:main-curl} and Corollary~\ref{cor:explicit-bound},
and the breakdown of the forward iteration.
Numerical values and open problems conclude the paper.

\section{Variational framework}\label{sec:framework}

In this section we recall the setting of \cite{MederskiSzulkin2025} and
fix the conventions used later.  For $r>1$, write $r'=r/(r-1)$
for its conjugate exponent.  Let $1<p<3$ and
\[
 p^*:=\frac{3p}{3-p}.
\]
The critical $p$-curl--curl equation is
\begin{equation}\label{eq:p-curl-curl}
 \curl\bigl(\abs{\curl u}^{p-2}\curl u\bigr)
 =\abs{u}^{p^*-2}u
 \qquad\text{in }\R^3.
\end{equation}
The natural space is
\[
 \mathcal D^{1,p}(\curlname;\R^3)
 :=\bigl\{u\in L^{p^*}(\R^3,\R^3):\curl u\in L^p(\R^3,\R^3)\bigr\},
\]
and the kernel of curl is denoted by
\[
 \Wspace:=\bigl\{w\in\mathcal D^{1,p}(\curlname;\R^3):\curl w=0\bigr\}.
\]
The gauge-corrected Sobolev--curl constant is
\begin{equation}\label{eq:global-constant}
 S_{p,\curlname}:=
 \inf_{\curl u\ne0}
 \frac{\displaystyle\int_{\R^3}\abs{\curl u}^{p}\dd x}
 {\displaystyle
  \left(
   \inf_{w\in\Wspace}\int_{\R^3}\abs{u+w}^{p^*}\dd x
  \right)^{p/p^*}}.
\end{equation}
Without this minimization, adding arbitrarily large curl-free fields
to a fixed field with nonzero curl would force the quotient to zero.

For every $u\in\mathcal D^{1,p}(\curlname;\R^3)$ there is a unique
$w(u)\in\Wspace$ satisfying
\begin{equation}\label{eq:gauge-minimizer}
 \int_{\R^3}\abs{u+w(u)}^{p^*}\dd x
 =\inf_{w\in\Wspace}\int_{\R^3}\abs{u+w}^{p^*}\dd x.
\end{equation}
By strict convexity, this is equivalent to
\begin{equation}\label{eq:gauge-EL}
 \int_{\R^3}
 \ip{\abs{u+w(u)}^{p^*-2}(u+w(u))}{\zeta}\dd x=0
 \qquad\text{for every }\zeta\in\Wspace,
\end{equation}
and hence to the nonlinear Coulomb-type gauge condition
\begin{equation}\label{eq:nonlinear-gauge}
 \diver\bigl(\abs{u+w(u)}^{p^*-2}(u+w(u))\bigr)=0
 \quad\text{in }\mathscr D'(\R^3).
\end{equation}
Conversely, \eqref{eq:nonlinear-gauge} implies \eqref{eq:gauge-EL}:
for $1<r<\infty$, every curl-free $Z\in L^r(\R^3,\R^3)$ is the $L^r$
limit of gradients $\nabla\varphi_n$ with
$\varphi_n\in\mathcal C_c^\infty(\R^3)$
\cite[Section~2.1]{MederskiSzulkin2021}, so that
$\int_{\R^3}\ip{B}{Z}\dd x=0$ for every divergence-free
$B\in L^{r'}(\R^3,\R^3)$.  We refer to this fact as the annihilator
identity; it will be used again in the duality arguments below.
Condition \eqref{eq:nonlinear-gauge} defines the set of gauge-minimal fields
used in \cite{MederskiSzulkin2025},
\[
 \Mman:=\bigl\{u\in\mathcal D^{1,p}(\curlname;\R^3):
 \diver(\abs{u}^{p^*-2}u)=0\bigr\},
\]
which contains exactly one representative of every gauge class.

The functional associated with \eqref{eq:p-curl-curl} is
\[
 J_p(u):=\frac1p\int_{\R^3}\abs{\curl u}^{p}\dd x
 -\frac1{p^*}\int_{\R^3}\abs{u}^{p^*}\dd x.
\]
By the main theorem of \cite{MederskiSzulkin2025}, $S_{p,\curlname}$ is attained
and \eqref{eq:p-curl-curl} has a ground state for every $1<p<3$; the same
holds under the cylindrical symmetry restrictions recalled below.

\subsection{The cylindrical symmetry classes}

Write
\[
 x=(\rho\cos\theta,\rho\sin\theta,z),\qquad
 \rho=(x_1^2+x_2^2)^{1/2},\qquad z=x_3,
\]
and let $e_\rho,e_\theta,e_3$ be the cylindrical orthonormal frame.  An
$SO(2)$-equivariant field can be decomposed into radial, azimuthal and axial
parts.

The two fixed-point classes introduced in \cite{MederskiSzulkin2025},
viewed here as subspaces of $\mathcal D^{1,p}(\curlname;\R^3)$, are:
\begin{align}
 \Tclass:
 \quad&A(x)=a(\rho,z)e_\theta
 =\frac{a(\rho,z)}{\rho}(-x_2,x_1,0),
 \notag\\
 \Sclass:
 \quad&U(x)=b(\rho,z)e_\rho+c(\rho,z)e_3
 \notag\\
 &\hspace{15mm}=\alpha(\rho,z)(x_1,x_2,0)+\beta(\rho,z)(0,0,1).
 \label{eq:S-form}
\end{align}
Let $S_{p,\curlname}^{\Tclass}$ and $S_{p,\curlname}^{\Sclass}$ denote the constants
obtained by restricting \eqref{eq:global-constant} to these classes.
By \cite[Theorem~1.4]{MederskiSzulkin2025} both restricted constants are
attained.  After multiplication by a suitable positive constant, the gauge-minimal
representatives of the minimizers solve the full equation
\eqref{eq:p-curl-curl}, not only its restriction to the symmetry subspace.

Since both classes are contained in the full energy space,
\begin{equation}\label{eq:constant-chain}
 0<S_{p,\curlname}
 \le \min\bigl\{S_{p,\curlname}^{\Tclass},
                 S_{p,\curlname}^{\Sclass}\bigr\},
 \qquad 1<p<3.
\end{equation}
Whether the unrestricted constant equals either restricted constant
remains open.

\subsection{Dual exponents and nonlinear duality}
\label{subsec:general-p-duality}

The pair of dual equations written by Frank and Loss at $p=3/2$
\cite[Section~9, pp.~24--25]{FrankLossZeroMode2022} is the self-dual
case of a correspondence between different critical exponents.
Inverting a nonlinear flux is also the mechanism of nonlinear Hodge
duality \cite{SibnerSibner1970,MariniOtway2014}.
Here we identify the paired critical exponents and the interchange of
the cylindrical classes.  For $1<p<3$, put
\begin{equation}\label{eq:dual-exponent}
 \widehat p:=(p^*)'=\frac{3p}{4p-3}.
\end{equation}
Then $1<\widehat p<3$ and
\[
 \widehat p^*=p',\qquad
 \widehat{\widehat p}=p,\qquad
 (\widehat p-1)(p^*-1)=1.
\]
In particular, $\widehat p=p$ if and only if $p=3/2$.
We denote the quotient in \eqref{eq:global-constant} by
\[
 Q_p(u):=\frac{\|\curl u\|_p^p}{\|u+w(u)\|_{p^*}^p},
 \qquad \curl u\ne0,
\]
where the gauge space and the minimizing correction $w(u)$ are
understood at the exponent of the corresponding problem.
Thus $Q_{3/2}=Q$.  Define the nonlinear map
\[
 \mathfrak D_p(u):=|\curl u|^{p-2}\curl u,
\]
where the expression is defined to be zero when $\curl u=0$.
For an arbitrary finite-energy field, $\mathfrak D_p(u)$ is merely an
element of $L^{p'}$; that it lies in the dual energy space is a
consequence of the equation.

\begin{theorem}[Duality of finite-energy solutions]
\label{thm:general-p-duality}
Let $1<p<3$, let $\widehat p$ be given by \eqref{eq:dual-exponent}, and let
$u\in\mathcal D^{1,p}(\curlname;\R^3)$ be a nonzero weak solution of
\eqref{eq:p-curl-curl}.  Then
$v:=\mathfrak D_p(u)\in\mathcal D^{1,\widehat p}(\curlname;\R^3)$ solves
\begin{equation}\label{eq:dual-p-PDE}
 \curl\bigl(|\curl v|^{\widehat p-2}\curl v\bigr)
 =|v|^{\widehat p^*-2}v
 \qquad\text{in }\R^3,
\end{equation}
and $\mathfrak D_{\widehat p}(v)=u$ almost everywhere.  Thus
$\mathfrak D_p$ is a bijection between the nonzero finite-energy solution
sets at $p$ and $\widehat p$, with inverse $\mathfrak D_{\widehat p}$.
It interchanges the cylindrical classes:
\[
 u\in\Tclass\ \Longrightarrow\ v\in\Sclass,
 \qquad
 u\in\Sclass\ \Longrightarrow\ v\in\Tclass,
\]
where each class is taken in the energy space of its respective exponent.
The two solutions satisfy
\begin{equation}\label{eq:dual-energy-mass}
 \int_{\R^3}|\curl u|^p\dd x
 =\int_{\R^3}|u|^{p^*}\dd x
 =\int_{\R^3}|v|^{\widehat p^*}\dd x
 =\int_{\R^3}|\curl v|^{\widehat p}\dd x=:m,
\end{equation}
and consequently
\begin{equation}\label{eq:dual-energy-quotients}
 J_p(u)=J_{\widehat p}(v)=\frac m3,
 \qquad Q_p(u)^{1/p}=Q_{\widehat p}(v)^{1/\widehat p}=m^{1/3}.
\end{equation}
In particular, the correspondence preserves ground states, both in the
full space and between the paired cylindrical classes.
\end{theorem}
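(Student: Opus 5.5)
We work directly from the weak formulation of \eqref{eq:p-curl-curl} and the exponent identities $\widehat p^*=p'$ and $(\widehat p-1)(p^*-1)=1$. Let $u\in\mathcal D^{1,p}(\curlname;\R^3)$ be a nonzero weak solution and set $v:=\mathfrak D_p(u)=|\curl u|^{p-2}\curl u$. Then $|v|^{p'}=|\curl u|^p$, so $v\in L^{p'}=L^{\widehat p^*}$ and $\|v\|_{\widehat p^*}^{\widehat p^*}=\|\curl u\|_p^p$. The weak form of the equation reads $\curl v=|u|^{p^*-2}u$ in $\mathscr D'$. The right-hand side lies in $L^{(p^*)'}=L^{\widehat p}$, with $\||u|^{p^*-2}u\|_{\widehat p}^{\widehat p}=\|u\|_{p^*}^{p^*}$. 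Hence $v\in\mathcal D^{1,\widehat p}(\curlname;\R^3)$.

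Next we invert the pointwise maps. Since $(\widehat p-1)(p^*-1)=1$, the map $\xi\mapsto|\xi|^{\widehat p-2}\xi$ inverts $\xi\mapsto|\xi|^{p^*-2}\xi$. Therefore
\[
|\curl v|^{\widehat p-2}\curl v=u
\]
almost everywhere, that is, $\mathfrak D_{\widehat p}(v)=u$. Similarly, $p'-2=(2-p)/(p-1)$ gives $|v|^{p'-2}v=\curl u$, so $|v|^{\widehat p^*-2}v=\curl u$. The dual equation \eqref{eq:dual-p-PDE} then reads $\curl u=\curl u$ in $\mathscr D'$, which holds trivially. Two points need care.

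\begin{itemize}
\item[(i)] The dual solution must be nonzero. If $v=0$ then $\curl u=0$, which forces $|u|^{p^*-2}u=\curl v=0$ and so $u=0$, a contradiction.
\item[(ii)] The weak formulation must be identified with the distributional identity $\curl(|\curl u|^{p-2}\curl u)=|u|^{p^*-2}u$. Testing against $\mathcal C_c^\infty$ fields is admissible, because such fields lie in $\mathcal D^{1,p}(\curlname;\R^3)$. I expect this to be the only genuinely delicate point. If weak solutions in \cite{MederskiSzulkin2025} are defined only against $\Mman$-type test fields, one first uses the gauge condition and the annihilator identity to extend to all test fields.
\end{itemize}

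Symmetry in $p\leftrightarrow\widehat p$ gives the bijection, with inverse $\mathfrak D_{\widehat p}$.

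For \eqref{eq:dual-energy-mass}, test the equation with $u$ itself, using density of $\mathcal C_c^\infty$ in the energy space or directly the weak formulation. This gives $\|\curl u\|_p^p=\|u\|_{p^*}^{p^*}$, the Nehari identity. Combined with the two norm identities above, $\|v\|_{\widehat p^*}^{\widehat p^*}=\|\curl u\|_p^p$ and $\|\curl v\|_{\widehat p}^{\widehat p}=\|u\|_{p^*}^{p^*}$, all four quantities equal $m$. Then
\[
J_p(u)=m\bigl(\tfrac1p-\tfrac1{p^*}\bigr)=\tfrac m3,
\]
and likewise for $J_{\widehat p}(v)$.

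For the quotients we need $w(u)=0$ and $w(v)=0$. Taking the divergence of the distributional equation gives $\diver(|u|^{p^*-2}u)=0$, so $u\in\Mman$. The same argument for \eqref{eq:dual-p-PDE} gives $v\in\Mman$ at exponent $\widehat p$. Hence $Q_p(u)=m/m^{p/p^*}=m^{p/3}$, so $Q_p(u)^{1/p}=m^{1/3}$, and identically $Q_{\widehat p}(v)^{1/\widehat p}=m^{1/3}$.

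For the class interchange, compute curls in cylindrical coordinates. If $u=a(\rho,z)e_\theta$, then $\curl u=-\partial_z a\,e_\rho+\rho^{-1}\partial_\rho(\rho a)e_3$ is poloidal. Multiplying by the scalar $|\curl u|^{p-2}$, which depends only on $(\rho,z)$, keeps $v$ poloidal. If instead $u=b e_\rho+c e_3$, then $\curl u=(\partial_z b-\partial_\rho c)e_\theta$ is toroidal, and so is $v$. For distributional curls this follows by equivariance under rotations about the $e_3$-axis and the reflection through a plane containing it. These symmetries characterise the two classes, and $\mathfrak D_p$ commutes with them, up to the sign change of the pseudovector curl under reflections, which is exactly what swaps the classes.

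Ground states are preserved because $J$ equals $m/3$ on both sides and $\mathfrak D_p$ is a bijection between the solution sets, in the full space and between the paired classes. So minimal energy at $p$ corresponds to minimal energy at $\widehat p$.
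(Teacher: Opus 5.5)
Your proposal is correct and follows essentially the same route as the paper. The pointwise identities $(\widehat p-1)(p^*-1)=1$ and $|v|^{p'-2}v=\curl u$ give the dual equation and the inverse map, testing with $u$ gives the Nehari identity, and taking divergences gives the nonlinear gauge conditions, hence the quotient formula. The only differences are cosmetic: you check explicitly that $v\neq0$, and you justify the distributional class interchange through the reflection parity of curl (the device the paper itself uses in Lemma~\ref{lem:canonical-inverse-descent}), rather than through cylindrical curl formulas with rotational averaging and smooth approximation.
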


\begin{proof}
By definition, $v\in L^{p'}=L^{\widehat p^*}$, and the equation for $u$ gives
\[
 \curl v=|u|^{p^*-2}u\in L^{\widehat p},
\]
because $(p^*-1)\widehat p=p^*$.  The identity
$(\widehat p-1)(p^*-1)=1$ now yields
\[
 |\curl v|^{\widehat p-2}\curl v=u.
\]
On the other hand, $|v|^{p'-2}v=\curl u$.  Taking the distributional curl
of the preceding identity proves \eqref{eq:dual-p-PDE}.  The same argument
with $p$ and $\widehat p$ interchanged gives the inverse correspondence.
The cylindrical curl formulas interchange toroidal and poloidal fields,
while multiplication by a scalar function of their length preserves each
type.  These statements remain valid distributionally, by rotational
averaging and smooth approximation in the curl graph norm.

Testing the equation for $u$ with $u$ gives the first equality in
\eqref{eq:dual-energy-mass}.  This testing is legitimate by density of
$\mathcal C_c^\infty(\R^3,\R^3)$ in the curl graph space: in a cutoff at
radius $R$, the extra curl term is bounded in $L^p$ by
$C\|u\|_{L^{p^*}(B_{2R}\setminus B_R)}\to0$ as $R\to\infty$;
mollification then gives smooth test fields.  The other equalities follow directly from the two pointwise
power identities above.  Taking divergence of both equations shows that
$u$ and $v$ satisfy their respective nonlinear gauge conditions.  By the
annihilator identity, both are therefore the norm-minimizing
representatives of their gauge classes.  Since
$1/p-1/p^*=1/\widehat p-1/\widehat p^*=1/3$, formula
\eqref{eq:dual-energy-quotients} follows.  Finally, the bijection of solution
sets preserves their energies, and hence also their least energies.
\end{proof}

Together with the attainment results of
\cite{MederskiSzulkin2025}, the solution correspondence gives the
following identities for the sharp constants.

\begin{theorem}[Duality of the sharp constants]
\label{thm:general-p-constant-duality}
For every $1<p<3$ one has
\begin{align}
 S_{p,\curlname}^{1/p}
 &=S_{\widehat p,\curlname}^{1/\widehat p},
 \label{eq:global-p-constant-duality}\\
 \bigl(S_{p,\curlname}^{\Tclass}\bigr)^{1/p}
 &=\bigl(S_{\widehat p,\curlname}^{\Sclass}\bigr)^{1/\widehat p},
 \qquad
 \bigl(S_{p,\curlname}^{\Sclass}\bigr)^{1/p}
 =\bigl(S_{\widehat p,\curlname}^{\Tclass}\bigr)^{1/\widehat p}.
 \label{eq:cylindrical-p-constant-duality}
\end{align}
If a minimizer is rescaled to solve \eqref{eq:p-curl-curl}, its image
under $\mathfrak D_p$ attains the corresponding dual constant and solves
the dual equation.  Thus attainment transfers in both directions.
\end{theorem}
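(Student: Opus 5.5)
The plan is to combine the attainment theorem of \cite{MederskiSzulkin2025} (for the full problem and for the classes $\Tclass$, $\Sclass$) with Theorem~\ref{thm:general-p-duality}. The key link is an identification of each sharp constant with a ground-state energy level.

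\emph{Step 1 (constants as least levels).} Fix $1<p<3$. Let $u$ be a minimizer for $S_{p,\curlname}$, taken in its gauge-minimal representative $u\in\Mman$. The Euler--Lagrange equation of the quotient $Q_p$ at $u$ reads $\curl(|\curl u|^{p-2}\curl u)=\mu|u|^{p^*-2}u$ with $\mu=S_{p,\curlname}\|u\|_{p^*}^{p-p^*}$. Variations in gauge directions vanish by \eqref{eq:gauge-EL}. Since $p\ne p^*$, a positive rescaling $tu$ solves \eqref{eq:p-curl-curl}, and $Q_p$ is scale invariant. By \eqref{eq:dual-energy-quotients}, this solution satisfies $S_{p,\curlname}^{1/p}=Q_p(tu)^{1/p}=m^{1/3}$.

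Conversely, every nonzero finite-energy solution $u$ lies in $\Mman$, since taking the divergence of \eqref{eq:p-curl-curl} gives \eqref{eq:nonlinear-gauge}. Hence $Q_p(u)^{1/p}=m(u)^{1/3}\ge S_{p,\curlname}^{1/p}$. Thus
\[
S_{p,\curlname}^{1/p}=\min\{m(u)^{1/3}: u\ \text{a nonzero solution of \eqref{eq:p-curl-curl}}\},
\]
and the minimum is attained exactly at the rescaled minimizers.

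For the restricted classes, I would repeat this argument with minimizers over $\Tclass$ (resp.\ $\Sclass$). Here I invoke the statement recalled after \eqref{eq:S-form}: the rescaled gauge-minimal representatives of restricted minimizers solve the full equation. One has to check that the gauge-minimal representative stays in the class. I would argue this by uniqueness of $w(u)$ combined with $SO(2)$-equivariance and the reflection symmetry that distinguishes toroidal from poloidal fields. The conclusion is $(S^{\Tclass}_{p,\curlname})^{1/p}=\min m^{1/3}$ over nonzero solutions lying in $\Tclass$, and likewise for $\Sclass$.

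\emph{Step 2 (transfer).} By Theorem~\ref{thm:general-p-duality}, $\mathfrak D_p$ is a bijection from nonzero solutions at $p$ onto nonzero solutions at $\widehat p$, with inverse $\mathfrak D_{\widehat p}$. It preserves $m$ by \eqref{eq:dual-energy-mass}, and it maps $\Tclass$ onto $\Sclass$ and $\Sclass$ onto $\Tclass$; surjectivity onto each class follows from applying the inverse map. Taking minima of $m^{1/3}$ over the corresponding solution sets gives \eqref{eq:global-p-constant-duality} and \eqref{eq:cylindrical-p-constant-duality}.

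For the attainment statement, let $u$ be a rescaled minimizer at $p$. Then $v=\mathfrak D_p(u)$ solves \eqref{eq:dual-p-PDE} and $Q_{\widehat p}(v)^{1/\widehat p}=m^{1/3}$, which equals the dual constant to the power $1/\widehat p$. Hence $v$ attains the dual constant, and by symmetry of the argument attainment transfers in both directions.

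\emph{Main obstacle.} The delicate point is Step~1 for the restricted classes. There one must ensure that a restricted minimizer satisfies the full equation, not just the equation tested against fields in the class, and that the class-restricted minimum is taken over genuine finite-energy solutions in that class. I would rely on the principle of symmetric criticality as cited from \cite[Theorem~1.4]{MederskiSzulkin2025}. The remaining ingredient is that restricting the gauge minimization to curl-free fields in the class yields the same $w(u)$ as the full minimization, which follows from the uniqueness and symmetry argument above.
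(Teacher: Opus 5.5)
Your proposal is correct and is essentially the paper's argument. The paper also takes the rescaled gauge-minimal minimizers supplied by \cite[Theorems~1.1 and~1.4]{MederskiSzulkin2025} and pushes them through $\mathfrak D_p$, using \eqref{eq:dual-energy-quotients} and the class interchange of Theorem~\ref{thm:general-p-duality}, to obtain one inequality; it gets the reverse inequality by swapping $p$ and $\widehat p$, or by applying the inverse map to a minimizer of the opposite class. Your formulation of each constant as the least value of $m^{1/3}$ over the solutions of the corresponding class repackages these two inequalities as an equality of minima over solution sets matched by the $m$-preserving bijection.
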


\begin{proof}
By \cite[Theorems~1.1 and~1.4]{MederskiSzulkin2025}, the unrestricted
and the two cylindrical constants are attained, and their gauge-minimal
minimizers can be rescaled to solve \eqref{eq:p-curl-curl}.
Let $u$ be such a minimizer for $S_{p,\curlname}$.
Theorem~\ref{thm:general-p-duality} gives
\[
 S_{\widehat p,\curlname}^{1/\widehat p}
 \le Q_{\widehat p}(\mathfrak D_pu)^{1/\widehat p}
 =Q_p(u)^{1/p}=S_{p,\curlname}^{1/p}.
\]
Interchanging $p$ and $\widehat p$ proves
\eqref{eq:global-p-constant-duality}.
Starting instead with a toroidal minimizer yields
\[
 \bigl(S_{\widehat p,\curlname}^{\Sclass}\bigr)^{1/\widehat p}
 \le\bigl(S_{p,\curlname}^{\Tclass}\bigr)^{1/p},
\]
because the dual solution is poloidal.  Applying the inverse
correspondence to a poloidal minimizer gives the reverse inequality.
Interchanging the cylindrical classes proves the remaining identity.
Equality in these comparisons also shows that the image of every
normalized minimizer is a minimizer of the paired problem.
\end{proof}

\begin{corollary}[The exponent dual to two]
\label{cor:p-two-dual}
Let $S_{\mathrm{Sob},2}=3(\pi/2)^{4/3}$ be the classical scalar Sobolev constant in
dimension three.  Then
\[
 S_{6/5,\curlname}=S_{2,\curlname}^{3/5}
 >S_{\mathrm{Sob},2}^{3/5}=3^{3/5}\left(\frac\pi2\right)^{4/5}.
\]
The sharp inequality
\begin{equation}\label{eq:p-two-dual-inequality}
 \|v\|_2\le S_{2,\curlname}^{-1/2}\|\curl v\|_{6/5}
\end{equation}
holds for every $v\in L^2(\R^3,\R^3)$ satisfying $\diver v=0$ and
$\curl v\in L^{6/5}(\R^3,\R^3)$, and is attained.
If $u$ is any ground state furnished by
\cite[Theorem~1.2]{MederskiSzulkin2021}, then $v=\curl u$ is a ground state
of
\[
 \curl\bigl(|\curl v|^{-4/5}\curl v\bigr)=v
 \qquad\text{in }\R^3,
\]
and the inverse formula is $u=|\curl v|^{-4/5}\curl v$.  Their common
ground-state energy is
\[
 J_{6/5}(v)=J_2(u)
 =\frac13S_{2,\curlname}^{3/2}
 =\frac13S_{6/5,\curlname}^{5/2}.
\]
For the cylindrical constants one has
\[
 S_{6/5,\curlname}^{\Sclass}
 =\bigl(S_{2,\curlname}^{\Tclass}\bigr)^{3/5},
 \qquad
 S_{6/5,\curlname}^{\Tclass}
 =\bigl(S_{2,\curlname}^{\Sclass}\bigr)^{3/5}.
\]
If a toroidal $p=2$ minimizer solves \eqref{eq:p-curl-curl}, its curl is
a poloidal $p=6/5$ minimizer; the analogous assertion holds with the two
classes interchanged.
\end{corollary}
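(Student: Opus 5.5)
The corollary is the case $p=2$ of Theorems~\ref{thm:general-p-duality} and~\ref{thm:general-p-constant-duality}. For $p=2$ we have $p^*=6$ and $\widehat p=3\cdot2/(8-3)=6/5$, and hence $\widehat p^*=p'=2$. Equation \eqref{eq:dual-p-PDE} at $\widehat p=6/5$ then reads $\curl(|\curl v|^{-4/5}\curl v)=v$, and $\mathfrak D_2(u)=\curl u$. Theorem~\ref{thm:general-p-constant-duality} gives $S_{6/5,\curlname}^{5/6}=S_{2,\curlname}^{1/2}$, that is, $S_{6/5,\curlname}=S_{2,\curlname}^{3/5}$. The cylindrical identities follow in the same way from \eqref{eq:cylindrical-p-constant-duality}. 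The ground-state correspondence, $v=\curl u$ with inverse $u=\mathfrak D_{6/5}(v)=|\curl v|^{-4/5}\curl v$, is the bijection of Theorem~\ref{thm:general-p-duality}. For the energies, \eqref{eq:dual-energy-quotients} gives $J=m/3$ with $m^{1/3}=Q_2(u)^{1/2}=S_{2,\curlname}^{1/2}$. Hence $m=S_{2,\curlname}^{3/2}=S_{6/5,\curlname}^{5/2}$.

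For the strict inequality I will compare with the scalar Sobolev constant. Take a gauge-minimal $u$ with $\curl u\neq0$ at $p=2$. The nonlinear gauge condition is then $\diver(|u|^4u)=0$, and $u\in L^6$. I would bound $\|u\|_6$ through a divergence-free representative: let $\tilde u$ be the Helmholtz projection of $u$, which has the same curl and lies in $L^6$. Then $\|u\|_6\le\|\tilde u\|_6$, and for divergence-free $\tilde u$,
\[
\|\nabla\tilde u\|_2=\|\curl\tilde u\|_2,\qquad \|\tilde u\|_6\le S_{\mathrm{Sob},2}^{-1/2}\|\nabla\tilde u\|_2
\]
componentwise. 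This gives $S_{2,\curlname}\ge S_{\mathrm{Sob},2}$. Strictness is the main obstacle. Equality would force the minimizer's divergence-free representative to have every component an Aubin--Talenti bubble with a common profile, or zero. Such a field is $\tilde u=f\,e$ for a fixed vector $e$ with $\diver\tilde u=\nabla f\cdot e=0$, which is impossible for a bubble unless $\tilde u=0$. Attainment of $S_{2,\curlname}$ \cite{MederskiSzulkin2021} makes this rigorous. Alternatively, I would cite the strict inequality from \cite{MederskiSzulkin2021}, where it is proved. Raising to the power $3/5$ gives the displayed bound.

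For the inequality \eqref{eq:p-two-dual-inequality}, let $v\in L^2$ be divergence-free with $\curl v\in L^{6/5}$. Then $v\in\mathcal D^{1,6/5}(\curlname;\R^3)$, and $\diver(|v|^{0}v)=\diver v=0$ because $\widehat p^*-2=0$. So $v\in\Mman$ at exponent $6/5$, and its gauge correction vanishes. The definition of $S_{6/5,\curlname}$ then gives $\|\curl v\|_{6/5}^{6/5}\ge S_{6/5,\curlname}\|v\|_2^{6/5}$. Taking the $5/6$ power and using $S_{6/5,\curlname}^{5/6}=S_{2,\curlname}^{1/2}$ yields the claim. Attainment and the form of the optimizers come from the image $\curl u$ of a $p=2$ ground state, multiplied by nonzero constants. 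This image is divergence-free and attains the dual constant by Theorem~\ref{thm:general-p-constant-duality}. The last sentence of the corollary is again the class interchange in Theorem~\ref{thm:general-p-duality}.
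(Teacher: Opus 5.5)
Your proposal is correct and follows the paper's proof. You specialize Theorems~\ref{thm:general-p-duality} and~\ref{thm:general-p-constant-duality} to $p=2$, $\widehat p=6/5$, $\widehat p^*=2$; you note that a divergence-free $v\in L^2$ is already gauge-minimal at exponent $6/5$, where your observation $\diver(|v|^0v)=0$ plays the role of the paper's Helmholtz-orthogonality remark; and you transfer ground states by $\mathfrak D_2(u)=\curl u$. The one difference is that you sketch a direct equality-case proof of $S_{2,\curlname}>S_{\mathrm{Sob},2}$, whereas the paper simply cites \cite[Theorem~1.2(a)]{MederskiSzulkin2021}, which you also offer as a fallback. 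Your sketch still relies on attainment from \cite{MederskiSzulkin2021}, and its ``common profile'' step needs equality in Minkowski's inequality for $\sum_i\tilde u_i^2$ in $L^3$, or equivalently Kato's inequality $|\nabla|\tilde u||\le|\nabla\tilde u|$.
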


\begin{proof}
For $p=2$ one has $p^*=6$, $\widehat p=6/5$, and
$\widehat p^*=2$.  Theorem~\ref{thm:general-p-constant-duality}, that is,
\eqref{eq:global-p-constant-duality} and
\eqref{eq:cylindrical-p-constant-duality}, gives the constant identities.
The strict inequality $S_{2,\curlname}>S_{\mathrm{Sob},2}$ and
attainment at $p=2$ are precisely
\cite[Theorem~1.2(a)--(b)]{MederskiSzulkin2021}.
In $L^2$ the gauge-minimizing representative is the divergence-free one,
by the orthogonal Helmholtz decomposition.  Taking the $5/6$ power of
the $p=6/5$ Sobolev--curl inequality therefore gives
\eqref{eq:p-two-dual-inequality}.  Theorem~\ref{thm:general-p-duality}
transfers the $p=2$ ground state, proves attainment of this inequality,
and gives both the equation and the inverse formula.  The cylindrical
attainment statements use \cite[Theorem~1.4]{MederskiSzulkin2025} at $p=2$.
\end{proof}

The existence of ground states at $p=6/5$ is contained in the general
theorem of \cite{MederskiSzulkin2025}; what the corollary adds is their
explicit reconstruction from the $p=2$ theory of
\cite{MederskiSzulkin2021}, the exact relation between the sharp constants,
and the interchange of the two cylindrical classes.  For $p\ne3/2$ the
identities \eqref{eq:cylindrical-p-constant-duality} relate different
exponents and say nothing about which of $S_{p,\curlname}^{\Tclass}$ and
$S_{p,\curlname}^{\Sclass}$ is smaller.  Note also that $\mathfrak D_p$
does not preserve the quotient of arbitrary fields;
\eqref{eq:dual-energy-quotients} relies on $u$ being a solution.

\subsection{The self-dual exponent}

From now on we take $p=3/2$, so that $p^*=3$.  The equation becomes
\begin{equation}\label{eq:selfdual-PDE}
 \curl\bigl(\abs{\curl u}^{-1/2}\curl u\bigr)=\abs{u}u.
\end{equation}
For a field satisfying the Nehari identity
\[
 \int_{\R^3}\abs{\curl u}^{3/2}\dd x
 =\int_{\R^3}\abs{u}^{3}\dd x,
\]
we have
\[
 J_{3/2}(u)=\frac13\int_{\R^3}\abs{u}^{3}\dd x
 =\frac13\int_{\R^3}\abs{\curl u}^{3/2}\dd x.
\]

At this exponent the gauge-invariant quotient is $Q$ of
\eqref{eq:field-quotient}, the infimum in its denominator being taken
over $\Wspace$.  If $u\in\Mman$, then $w(u)=0$ and this denominator
is simply $\bigl(\int_{\R^3}\abs{u}^{3}\dd x\bigr)^{1/2}$.

We recall the following elementary gauge property, already used in
\cite[Section~1]{BieganowskiMederski2026}.  For the reader's convenience,
we include a short proof.

\begin{lemma}
\label{lem:T-gauge}
If $A=a(\rho,z)e_\theta\in L^3(\R^3,\R^3)$, then
\[
 \diver(\abs{A}A)=0
 \quad\text{in }\mathscr D'(\R^3).
\]
Consequently, the infimum of $\|A+w\|_3$ over curl-free $w\in L^3$
is attained at $w=0$.  In particular, for a finite-energy field $A$,
one has $w(A)=0$ in \eqref{eq:gauge-minimizer}.
\end{lemma}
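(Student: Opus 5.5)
The plan is to verify the divergence identity directly by testing against smooth functions, using the rotational structure of $A$. Write $F:=\abs{A}A=\abs{a}a(\rho,z)\,e_\theta$. Since $A\in L^3$, $F\in L^{3/2}(\R^3,\R^3)\subset L^1_{\mathrm{loc}}$. We must show that
\[
 \int_{\R^3}\ip{F}{\nabla\varphi}\dd x=0
 \qquad\text{for every }\varphi\in\mathcal C_c^\infty(\R^3).
\]
In cylindrical coordinates, $\ip{e_\theta}{\nabla\varphi}=\rho^{-1}\partial_\theta\varphi$ away from the axis $\{\rho=0\}$, which has measure zero. Hence, by Fubini,
\[
 \int_{\R^3}\ip{F}{\nabla\varphi}\dd x
 =\int_{-\infty}^{\infty}\int_0^\infty\abs{a}a(\rho,z)
 \left(\int_0^{2\pi}\partial_\theta\varphi\dd\theta\right)\dd\rho\dd z .
\]
The inner integral vanishes by $2\pi$-periodicity in $\theta$. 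Fubini is legitimate because $\abs{F}\abs{\nabla\varphi}\in L^1$ and $\dd x=\rho\dd\rho\dd\theta\dd z$ with $\rho\cdot\rho^{-1}$ bounded. The same conclusion follows even more cleanly by writing $\ip{e_\theta}{\nabla\varphi}=\rho^{-1}\ip{(-x_2,x_1,0)}{\nabla\varphi}$ and using that the rotation generator has zero divergence. This proves $\diver(\abs{A}A)=0$ in $\mathscr D'(\R^3)$.

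For the minimization statement, the next step is to invoke the annihilator identity recalled after \eqref{eq:nonlinear-gauge}. The field $B:=\abs{A}A\in L^{3/2}=L^{3'}$ is divergence-free, so $\int\ip{B}{Z}\dd x=0$ for every curl-free $Z\in L^3$. By convexity of $t\mapsto\abs{t}^3$, which has gradient $3\abs{t}t$, we get, for curl-free $w\in L^3$,
\[
 \int\abs{A+w}^3\dd x\ge\int\abs{A}^3\dd x+3\int\ip{\abs{A}A}{w}\dd x=\int\abs{A}^3\dd x .
\]
So the infimum is attained at $w=0$. For finite-energy $A$, uniqueness of the minimizer in \eqref{eq:gauge-minimizer} (strict convexity) then gives $w(A)=0$; equivalently, $A\in\Mman$.

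The only mildly delicate point is justifying the coordinate computation for a merely measurable $a$ near the axis. The $L^1$ bound above handles it, so no real obstacle is expected.
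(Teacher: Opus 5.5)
Your proposal is correct and follows the paper's proof. Both use the cylindrical-coordinate computation in which the $\theta$-integral of $\partial_\theta\varphi$ vanishes, with absolute convergence from $\abs{A}A\in L^{3/2}$, so there is no axis contribution; both then apply the annihilator identity and uniqueness of the gauge minimizer. Your explicit convexity inequality $\int_{\R^3}\abs{A+w}^3\dd x\ge\int_{\R^3}\abs{A}^3\dd x+3\int_{\R^3}\ip{\abs{A}A}{w}\dd x$ just unpacks the equivalence \eqref{eq:gauge-EL}--\eqref{eq:nonlinear-gauge} that the paper cites.
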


\begin{proof}
For $\varphi\in\mathcal C_c^\infty(\R^3)$, cylindrical coordinates give
\[
 \int_{\R^3}|A|A\cdot\nabla\varphi\dd x
 =
 \int_{\R}\int_0^\infty |a|a
 \left(\int_0^{2\pi}
   \partial_\theta\varphi(\rho\cos\theta,\rho\sin\theta,z)
   \dd\theta\right)\dd\rho\,\dd z=0.
\]
The integral is absolutely convergent because $|A|A\in L^{3/2}$ and
$\nabla\varphi$ is bounded and compactly supported.  This argument
also rules out a distribution supported on the axis.
The conclusion follows from
\eqref{eq:gauge-EL}--\eqref{eq:nonlinear-gauge} and uniqueness of the
gauge minimizer.
\end{proof}

\section{The hyperbolic problem}\label{sec:hyperbolic}

In this section we fix the hyperbolic notation and collect the properties
of the reduced variational problem.  For the hyperbolic geometry used below
we refer to \cite{Ratcliffe2019}.

\subsection{Upper half-plane model}

We use the upper half-plane model
\[
 \Hh^2:=\bigl\{(\rho,z)\in\R^2:\rho>0\bigr\},
 \qquad
 g:=\frac{\dd\rho^2+\dd z^2}{\rho^2}.
\]
It is the complete, simply connected two-dimensional Riemannian manifold of
constant sectional curvature $-1$.  Its volume element is
\[
 \dd V_g=\frac{\dd\rho\,\dd z}{\rho^2}.
\]
For a scalar function $\psi$,
\[
 \abs{\nabla_g\psi}_g=\rho\abs{\nabla\psi},
\]
where $\nabla$ on the right is the ordinary Euclidean gradient in the
$(\rho,z)$ variables.

The hyperbolic $q$-Laplacian is
\begin{equation}\label{eq:H-p-laplace}
 \Delta_{q,g}\psi
 :=\diver_g\bigl(\abs{\nabla_g\psi}_g^{q-2}\nabla_g\psi\bigr).
\end{equation}
In dimension two, formula \eqref{eq:H-p-laplace} becomes
\[
 \Delta_{q,g}\psi
 =\rho^2\diver_{\rho,z}
 \left(\rho^{q-2}\abs{\nabla\psi}^{q-2}\nabla\psi\right).
\]
In particular,
\begin{equation}\label{eq:H-threehalf}
 \Delta_{3/2,g}\psi
 =\rho^2\diver_{\rho,z}
 \left(\rho^{-1/2}\abs{\nabla\psi}^{-1/2}\nabla\psi\right).
\end{equation}

\subsection{Hyperbolic polar coordinates}

Fix the point
\[
 O:=(1,0)\in\Hh^2
\]
and let
\[
 t:=\dist_g(O,(\rho,z)).
\]
The distance formula in the upper half-plane gives
\begin{equation}\label{eq:distance-formula}
 \cosh t
 =1+\frac{(\rho-1)^2+z^2}{2\rho}
 =\frac{1+\rho^2+z^2}{2\rho}.
\end{equation}
Throughout we use the abbreviations
\[
 s:=1+\rho^2+z^2,
 \qquad
 \eta:=\frac{2\rho}{s},
\]
so that
\[
 \eta=\sech t,\qquad 0<\eta\le1.
\]
The value $\eta=1$ corresponds to the centre $(\rho,z)=(1,0)$, and
$\eta\to0$ to hyperbolic infinity.

In geodesic polar coordinates $(t,\varphi)$ centred at $O$,
\[
 \dd V_g=\sinh t\,\dd t\,\dd\varphi.
\]
For a radial function $\psi=\psi(t)$,
\[
 \Delta_{q,g}\psi
 =\frac1{\sinh t}\frac{\dd}{\dd t}
 \left(\sinh t\,\abs{\psi'}^{q-2}\psi'\right).
\]
For $a>0$, let $\psi_a(t)$ be the radial solution about $O$ of
\begin{equation}\label{eq:hyperbolic-radial-ODE}
 -\frac1{\sinh t}\frac{\dd}{\dd t}
 \left(\sinh t\,\abs{\psi_a'}^{-1/2}\psi_a'\right)
 =\psi_a^2,
 \qquad t>0,
\end{equation}
with $\psi_a(0)=a$ and $\psi_a'(0)=0$, considered up to its first zero.

A \emph{hyperbolic isometry} is a bijection $F:\Hh^2\to\Hh^2$
preserving the distance $\dist_g$.  For $\rho_0>0$ and $z_0\in\R$,
the map
\[
 F_{\rho_0,z_0}(\rho,z)=(\rho_0\rho,\rho_0z+z_0)
\]
is an isometry sending $O$ to $(\rho_0,z_0)$; these maps act
transitively on $\Hh^2$, preserve the two integrals defining
$\Lambda_{\Hh^2}$ in \eqref{eq:Lambda-H} and equation
\eqref{eq:hyperbolic-PDE}, and correspond to the Euclidean maps
$T_{1/\rho_0,z_0}$ of \eqref{eq:euclidean-moves} under
$\psi\mapsto A(\psi)$.

\subsection{The reduced variational problem}

The hyperbolic Poincar\'e inequality identifies the completion
$\mathcal D^{1,3/2}(\Hh^2)$ of Section~\ref{sec:introduction} with
$W^{1,3/2}(\Hh^2)$, with equivalent norms.  We study the constant
$\Lambda_{\Hh^2}$ defined in \eqref{eq:Lambda-H}; since the critical
Sobolev exponent of the $3/2$-Laplacian in dimension two is
$(3/2)^*_2=6$, the exponent $3$ there is subcritical.

The positivity of $\Lambda_{\Hh^2}$ follows by interpolation.  Let
$S_{\mathrm E}=2(4\pi^2/(9\sqrt3))^{3/4}$ be the sharp constant of the
Euclidean Sobolev inequality in dimension two with gradient exponent
$3/2$ \cite{Aubin1976,Talenti1976}.  The hyperbolic Poincar\'e
inequality $\norm{\psi}{3/2}\le\frac32\norm{\nabla_g\psi}{3/2}$ and the
critical embedding
$\norm{\psi}{6}\le S_{\mathrm E}^{-2/3}\norm{\nabla_g\psi}{3/2}$ of
\cite[(6.2)]{DuttaSandeep2025}, all norms taken with respect to
$\dd V_g$, give
\begin{equation}\label{eq:H-continuous-L3-embedding}
 \norm{\psi}{3}
 \le\norm{\psi}{3/2}^{1/3}\norm{\psi}{6}^{2/3}
 \le(3/2)^{1/3}S_{\mathrm E}^{-4/9}
                           \norm{\nabla_g\psi}{3/2},
\end{equation}
hence $\Lambda_{\Hh^2}\ge\sqrt{2/3}\,S_{\mathrm E}^{2/3}>0$.

The following compactness statement is the case $n=2$, $p=3/2$,
$q=3$ of \cite[Lemma~6.1]{DuttaSandeep2025}.

\begin{lemma}[Radial compactness at a fixed centre]
\label{lem:fixed-centre-radial-compactness}
Let $\mathcal D_{\mathrm{rad},O}^{1,3/2}(\Hh^2)$ be the closure, in the
homogeneous energy norm, of the smooth compactly supported functions that are
radial about $O$, where
\[
 \norm{f}{\mathcal D^{1,3/2}}
 :=\left(\int_{\Hh^2}\abs{\nabla_gf}_g^{3/2}\dd V_g\right)^{2/3}.
\]
Then the following embedding is compact:
\[
 \mathcal D_{\mathrm{rad},O}^{1,3/2}(\Hh^2)
 \hookrightarrow L^3(\Hh^2).
\]
\end{lemma}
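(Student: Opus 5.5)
The plan is to prove the lemma directly by a radial (Strauss-type) decay estimate combined with local compactness, instead of only citing \cite[Lemma~6.1]{DuttaSandeep2025}. Work in geodesic polar coordinates $(t,\varphi)$ about $O$. For a smooth compactly supported $f=f(t)$ we have
\[
\norm{f}{\mathcal D^{1,3/2}}^{3/2}=2\pi\int_0^\infty|f'(t)|^{3/2}\sinh t\dd t .
\]

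\textbf{Step 1: pointwise decay.} For $t\ge1$, write $f(t)=-\int_t^\infty f'(\tau)\dd\tau$ and apply H\"older with the weight $\sinh\tau$:
\[
|f(t)|\le\Bigl(\int_t^\infty|f'|^{3/2}\sinh\tau\dd\tau\Bigr)^{2/3}\Bigl(\int_t^\infty(\sinh\tau)^{-2}\dd\tau\Bigr)^{1/3}.
\]
This gives $|f(t)|\le C\e^{-2t/3}\norm{f}{\mathcal D^{1,3/2}}$. By density, the bound extends to the closure $\mathcal D^{1,3/2}_{\mathrm{rad},O}(\Hh^2)$.

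\textbf{Step 2: a uniform tail.} Combine the decay with the continuous embedding \eqref{eq:H-continuous-L3-embedding}, or more crudely with the $L^{3/2}$ Poincar\'e bound. On $\{t\ge R\}$,
\[
\int_{t\ge R}|f|^3\dd V_g\le\sup_{t\ge R}|f|^{3/2}\int|f|^{3/2}\dd V_g\le C\e^{-R}\norm{f}{\mathcal D^{1,3/2}}^{3},
\]
since $\sup|f|^{3/2}\lesssim\e^{-R}$. Hence the $L^3$ mass outside a geodesic ball $B_R(O)$ is uniformly small on bounded sets of the space.

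\textbf{Step 3: compactness on the ball.} On a bounded ball $B_R(O)$ the metric $g$ is uniformly equivalent to the Euclidean one. The set $\{f|_{B_R}\}$ is bounded in $W^{1,3/2}(B_R)$, with the $L^{3/2}$ part controlled by the hyperbolic Poincar\'e inequality. Rellich--Kondrachov in dimension two, where $3<6=(3/2)^*_2$, then gives compactness into $L^3(B_R)$. A diagonal argument over $R\to\infty$, together with the uniform tail from Step~2, shows that every bounded sequence has a subsequence converging in $L^3(\Hh^2)$.

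\textbf{Main obstacle.} The delicate point is making the tail estimate uniform. The decay must be fast enough to beat the exponential growth $\sinh t$ of the volume. The pointwise rate $\e^{-2t/3}$ alone is not sufficient if one integrates $|f|^3\sinh t$ directly, because $\e^{-2t}\sinh t$ is merely bounded. The fix is to interpolate with the Poincar\'e control of $\int|f|^{3/2}\dd V_g$, as in Step~2. This is exactly where the subcritical gap and the spectral gap of $\Hh^2$ enter, and I expect it to be the step needing the most care.
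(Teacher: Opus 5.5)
Your proof is correct. It differs from the paper mainly in that the paper gives no argument of its own: it simply quotes \cite[Lemma~6.1]{DuttaSandeep2025} with $n=2$, $p=3/2$, $q=3$. You instead prove this special case directly by the Strauss-type mechanism, in three parts. First, the weighted H\"older bound $|f(t)|\le(\coth t-1)^{1/3}\bigl(\int_t^\infty|f'|^{3/2}\sinh\tau\dd\tau\bigr)^{2/3}$ extends to the closure by uniform convergence on $\{t\ge1\}$ together with the continuous $L^3$ embedding. Second, this bound gives a uniform exponential tail. Third, Rellich--Kondrachov on the geodesic ball plus a diagonal argument handle the bounded part. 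The ball is a Euclidean disc in the half-plane model, and there $g$ and $\dd V_g$ are comparable to the Euclidean ones; the $L^{3/2}$ bound comes from the Poincar\'e inequality. The citation covers the whole subcritical range in all dimensions at once. Your version is self-contained, and it shows that radial symmetry is used only in the tail, since the local compactness holds for arbitrary functions.

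One remark in your ``main obstacle'' paragraph is wrong, although it does no harm to the proof. The function $\e^{-2t}\sinh t=\tfrac12(\e^{-t}-\e^{-3t})$ is not merely bounded; it is integrable. So Step~1 alone already gives $\int_{t\ge R}|f|^3\dd V_g\le C\e^{-R}\norm{f}{\mathcal D^{1,3/2}}^3$ for $R\ge1$, the same rate your interpolation produces. With pointwise decay $\e^{-2t/3}$, direct integration of $|f|^q\sinh t$ works for every $q>3/2$; the borderline case is $q=p=3/2$, not $q=3$. The interpolation in Step~2 is therefore unnecessary. The Poincar\'e inequality is still needed elsewhere, namely to make the homogeneous closure a space of functions and to control the $L^{3/2}$ norm on the ball in Step~3.
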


For equation \eqref{eq:hyperbolic-PDE}, the symmetry theorem of
Dutta and Sandeep \cite[Theorem~1.1]{DuttaSandeep2025} states that
every positive finite-energy solution is radial about some point of
$\Hh^2$, strictly decreasing in the hyperbolic distance $t$, and
satisfies $\psi(t)\asymp|\psi'(t)|\asymp\e^{-2t}$ as $t\to\infty$.
An extremal for \eqref{eq:Lambda-H} exists by
\cite[Theorem~6.2]{DuttaSandeep2025}.

Every real minimizer has one sign: otherwise, applying
\eqref{eq:Lambda-H} to its positive and negative parts and adding their
gradient energies contradicts minimality, since
$\sqrt{m_+}+\sqrt{m_-}>\sqrt{m_++m_-}$, where
$m_\pm:=\int_{\Hh^2}|\psi_\pm|^3\dd V_g>0$.
After changing the sign if necessary, local
regularity and the strong maximum principle \cite{Vazquez1984} give
strict positivity.  After a positive rescaling we may therefore choose
a positive extremal $\psi_*$ solving \eqref{eq:hyperbolic-PDE}.  By applying a hyperbolic
isometry, we place its centre at $O=(1,0)$ and write $\psi_*=\psi_*(t)$.
It satisfies \eqref{eq:hyperbolic-radial-ODE} with
\begin{equation}\label{eq:hyperbolic-boundary}
 \psi_*'(0)=0,
 \qquad
 \psi_*(t)>0,
 \qquad
 \psi_*'(t)<0\ (t>0),
 \qquad
 \psi_*(t)\to0\quad(t\to\infty).
\end{equation}
We do not know a closed formula for $\psi_*$.

\subsection{Uniqueness and the shooting trichotomy}
\label{subsec:profile-uniqueness}

The uniqueness of the positive finite-energy profile is the following
classification of the initial-value solutions $\psi_a$ of
\eqref{eq:hyperbolic-radial-ODE}.

\begin{theorem}
\label{thm:hyperbolic-profile-uniqueness}
There exists a unique number $a_*>0$ with the following properties:
\begin{enumerate}[label=\textup{(\roman*)},leftmargin=9mm]
 \item if $0<a<a_*$, then $\psi_a$ remains positive, satisfies
\[
  \psi_a(t)\to0,
  \qquad
  \e^{2t}\psi_a(t)\to\infty
  \qquad(t\to\infty),
 \]
 and does not belong to $\mathcal D^{1,3/2}(\Hh^2)$;
 \item if $a=a_*$, then $\psi_{a_*}$ remains positive and, for some
 $C_*>0$,
 \begin{equation}\label{eq:rapid-hyperbolic-decay}
  \e^{2t}\psi_{a_*}(t)\to C_*
  \qquad(t\to\infty);
 \end{equation}
 this is the unique positive radial solution of
 \eqref{eq:hyperbolic-PDE} in $\mathcal D^{1,3/2}(\Hh^2)$ with centre $O$;
 \item if $a>a_*$, then $\psi_a$ has a zero at a finite hyperbolic radius.
\end{enumerate}
Consequently every positive finite-energy solution of
\eqref{eq:hyperbolic-PDE} is obtained from $\psi_{a_*}$ by a hyperbolic
isometry, and $\psi_*=\psi_{a_*}$.
\end{theorem}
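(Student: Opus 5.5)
The plan is to transform the radial equation \eqref{eq:hyperbolic-radial-ODE} into a one-dimensional quasilinear problem with a weight, of the form treated by Kawano, Yanagida and Yotsutani \cite{KawanoYanagidaYotsutani1993}, and to read off the trichotomy from their classification. The natural substitution flattens the operator. Put
\[
 s:=\int_t^\infty\frac{\dd\tau}{\sinh^2\tau}=\coth t-1,
 \qquad \frac{\dd s}{\dd t}=-\frac1{\sinh^2t},
\]
where the exponent $2=1/(q-1)$ comes from $q=3/2$. A direct computation gives $\sinh t\,|\psi_t|^{-1/2}\psi_t=-|\psi_s|^{-1/2}\psi_s$, so \eqref{eq:hyperbolic-radial-ODE} becomes
\[
 \bigl(|\psi_s|^{-1/2}\psi_s\bigr)_s+K(s)\,\psi^2=0,
 \qquad K(s):=\sinh^3 t(s),\quad 0<s<\infty .
\]
Under this change of variable the centre $t=0$ goes to $s=\infty$, and hyperbolic infinity $t=\infty$ goes to $s=0$. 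The weight satisfies $K(s)\sim s^{-3}$ as $s\to\infty$ and $K(s)\sim (2/s)^{3/2}\cdot\frac18\cdot 2^{3/2}$-type behaviour, i.e.\ $K\asymp s^{-3/2}$, as $s\to0$. The initial condition $\psi(0)=a$, $\psi'(0)=0$ at $t=0$ becomes the condition that $\psi(s)\to a$ with $|\psi_s|^{-1/2}\psi_s\to0$ as $s\to\infty$.

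Next I translate the decay regimes into this variable. Since $s\approx 2\e^{-2t}$ as $t\to\infty$, rapid decay $\e^{2t}\psi\to C_*$ means $\psi(s)\sim (C_*/2)\,s$ as $s\to0$. This is the ``zero Dirichlet value with finite slope'' (ground state) type in the KYY terminology. Case (i) corresponds to solutions that vanish at $s=0$ with infinite slope (slow decay). Case (iii) corresponds to solutions that cross zero at a positive $s$. I would then invoke the KYY structure theorem. It says: if a suitable function $G(s)$, built from $K$ and the exponents (a Pohozaev-type combination such as $s^{\alpha}K(s)$ with $\alpha$ chosen from $q=3/2$ and the power $2$), changes monotonicity at most once in the appropriate direction, then the three types are separated by a single threshold $a_*$. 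In that case the crossing type is an open set of large $a$, the slow-decay type is an open set of small $a$, and exactly one $a_*$ gives the rapidly decaying solution. The main computation is checking this hypothesis with the explicit $K(s)=\bigl((2s+s^2)^{-1/2}\bigr)^3$ (since $\sinh t=1/\sqrt{s^2+2s}$). This is where I expect the real obstacle. The sign condition for KYY's criterion has to be verified for a non-power weight, and it needs an honest one-variable calculus argument: differentiate $G$ and show its derivative has a single sign change. If the direct choice of exponent fails, I would try the alternative normalization of KYY, with the variable $r=1/s$ so the centre sits at $r=0$ and $K$ rewritten as a weight in $r$.

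Finally I connect the types with energy. For (i), the slow rate $\e^{2t}\psi\to\infty$ (in fact $\psi\asymp \e^{-t}$-type from $\psi\sim c\,s^{\gamma}$ with $\gamma<1$) makes $\int|\nabla_g\psi|^{3/2}\sinh t\,\dd t$ diverge, because $|\psi'|^{3/2}\e^{t}$ is not integrable. Hence $\psi_a\notin\mathcal D^{1,3/2}(\Hh^2)$. For (ii), the rapid decay and $|\psi'|\asymp \e^{-2t}$ give finite energy. Existence of $a_*$ is also independently confirmed by $\psi_*$ in \eqref{eq:hyperbolic-boundary}: by the Dutta--Sandeep asymptotics it has $\psi\asymp\e^{-2t}$, so it can be neither of type (i) nor of type (iii), and therefore $\psi_*=\psi_{a_*}$. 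The last assertion follows from this. By \cite[Theorem~1.1]{DuttaSandeep2025} any positive finite-energy solution is radial about some centre, which can be moved to $O$ by some $F_{\rho_0,z_0}$. It is then a positive radial finite-energy solution, so it is $\psi_{a}$ for $a$ equal to its central value, and by the trichotomy $a=a_*$.
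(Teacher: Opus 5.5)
Your strategy matches the paper's: reduce to Kawano--Yanagida--Yotsutani and select the branch using the Dutta--Sandeep solution. Your fallback variable $r=1/s=(\e^{2t}-1)/2$ is exactly the paper's substitution. However, the step that carries the proof is the one you leave open.

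In your variable $s$ the regular point sits at $s=\infty$. That is not the setting of the cited theorem, which treats radial solutions in $\R^n$ with data at the origin. So you must pass to $r$, where the equation reads $-(r|v'|^{-1/2}v')'=rK(r)v^2$ with $K(r)=(1+2r)^{-3/2}$. This is the case $n=2$, $m=3/2$, $q=2$, with $K$ smooth and positive at $0$. The Pohozaev function is not of the form $s^\alpha K(s)$; it is
\[
 G(r)=\tfrac13r^2K(r)-\tfrac13\int_0^r\sigma K(\sigma)\dd\sigma,
 \qquad
 G'(r)=\tfrac13r\bigl(rK(r)\bigr)'=\frac{r(1-r)}{3(1+2r)^{5/2}}.
\]
Hence $G\ge0$ on $(0,1)$ and $G'\le0$ on $(1,\infty)$, which is hypothesis (G) with $R_1=1$.

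Even then, (G) does not by itself give the threshold picture you describe. The cited theorem allows several global structures. The mixed one is selected only because the Dutta--Sandeep solution is positive with $\psi\asymp\e^{-2t}$, hence neither crossing nor slowly decaying. In your write-up this appears as an ``independent confirmation''; it is in fact a necessary step.

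Your argument for (i) is also wrong as written, and your uniqueness claim in (ii) and the final assertion depend on it. You assert $\psi\sim cs^\gamma$ with $\gamma<1$ (``$\e^{-t}$-type''). That rate does not yield divergence: for $1/3<\gamma<1$ one has $|\psi'|^{3/2}\sinh t\asymp\e^{(1-3\gamma)t}$, which is integrable. (These solutions in fact satisfy $\psi_a'\approx-\psi_a^4$ and decay only like $t^{-1/3}$.)

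The paper argues instead via Dutta--Sandeep: every positive finite-energy solution is radial with $\psi\asymp\e^{-2t}$, so after centring it cannot be slow. A direct alternative uses the flux $\mathcal J(t)=\sinh t\sqrt{-\psi'(t)}=\int_0^t\psi^2\sinh\tau\dd\tau$.
\begin{itemize}
 \item Finite energy means $\int_0^\infty\mathcal J^3\sinh^{-2}t\dd t<\infty$.
 \item Since $\mathcal J$ increases, this forces $\mathcal J(t)=o(\e^{2t/3})$, hence $\psi=o(\e^{-2t/3})$.
 \item Then $\mathcal J'=\psi^2\sinh t$ is integrable, so $\mathcal J$ is bounded and $\psi=O(\e^{-2t})$.
\end{itemize}

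Finally, you also need that every global positive $\psi_a$ tends to zero, which the classification does not supply. The paper obtains it by applying l'H\^opital's rule to $\sqrt{-\psi_a'}=\mathcal J/\sinh t$.
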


The finite-energy assumption is essential: the boundary conditions
\eqref{eq:hyperbolic-boundary} alone are satisfied by a continuum of
positive radial solutions, which tend to zero too slowly to have finite
energy.  Dutta and Sandeep \cite[Theorem~1.1]{DuttaSandeep2025} prove
symmetry and two-sided decay but not uniqueness of the nonlinear
profile; the substitution $r=(\e^{2t}-1)/2$ turns the radial equation
into a weighted Euclidean problem covered by Kawano, Yanagida and
Yotsutani \cite[Theorem~1]{KawanoYanagidaYotsutani1993}, whose
hypotheses we verify below.

\begin{proof}[Proof of Theorem~\ref{thm:hyperbolic-profile-uniqueness}]
We transform the radial equation into a weighted Euclidean equation of the
type studied by Kawano, Yanagida and Yotsutani
\cite{KawanoYanagidaYotsutani1993}.  Set
\[
 r=\frac{1}{\coth t-1}=\frac{\e^{2t}-1}{2},
 \qquad
 v(r)=\psi(t).
\]
Then
\[
 \frac{\dd r}{\dd t}=1+2r,
 \qquad
 \sinh t=\frac{r}{\sqrt{1+2r}},
\]
and \eqref{eq:hyperbolic-radial-ODE} becomes
\begin{equation}\label{eq:weighted-Euclidean-profile}
 -\frac{\dd}{\dd r}
 \left(r\abs{v'}^{-1/2}v'\right)
 =rK(r)v^2,
 \qquad
 K(r):=(1+2r)^{-3/2}.
\end{equation}
The two radial integrals transform as
\begin{align*}
 \int_0^\infty\abs{\psi'}^{3/2}\sinh t\dd t
 &=\int_0^\infty r\abs{v'}^{3/2}\dd r,
 \\
 \int_0^\infty\abs{\psi}^{3}\sinh t\dd t
 &=\int_0^\infty rK(r)\abs{v}^{3}\dd r.
\end{align*}

Equation \eqref{eq:weighted-Euclidean-profile} is the radial equation in
$\R^n$ considered in \cite{KawanoYanagidaYotsutani1993}, with
\[
 n=2,
 \qquad
 m=\frac32,
 \qquad
 q_{\mathrm{KYY}}=2.
\]
(In \cite{KawanoYanagidaYotsutani1993} the initial-value problem is continued
with $(v^+)^{q_{\mathrm{KYY}}}$, which coincides with
\eqref{eq:weighted-Euclidean-profile} up to the first zero of $v$, and this
is all that the crossing classification requires.)  We have $n>m>1$ and
$q_{\mathrm{KYY}}>m-1$; the weight $K$ is positive and smooth, with
$K(r)=1+O(r)$ at the origin, so the condition (K) of that paper holds with
exponent $\nu=0>-m$, and the additional requirement imposed there for $m>2$
is void.

For these parameters the Pohozaev-type function of
\cite{KawanoYanagidaYotsutani1993} is
\[
 \mathscr G(r)
 :=\frac13r^2K(r)-\frac13\int_0^r sK(s)\dd s.
\]
A direct calculation gives
\[
 \mathscr G'(r)
 =\frac13r\bigl(rK(r)\bigr)'
 =\frac{r(1-r)}{3(1+2r)^{5/2}}.
\]
Hence $\mathscr G'>0$ on $(0,1)$ and $\mathscr G'<0$ on $(1,\infty)$.
Since $\mathscr G(0)=0$, we get $\mathscr G\ge0$ on $(0,1)$,
$\mathscr G'\le0$ on $(1,\infty)$ and $\mathscr G\not\equiv0$, which is
hypothesis~(G) of \cite[Theorem~1]{KawanoYanagidaYotsutani1993} with
$R_1=1$.  To determine which alternative in that theorem applies,
we use the positive finite-energy solution provided by
\cite[Theorem~6.2]{DuttaSandeep2025}.  Its symmetry and sharp decay
\cite[Theorem~1.1]{DuttaSandeep2025} allow us to place its centre at $O$
and give
\[
 0<\liminf_{r\to\infty}rv(r)
 \le\limsup_{r\to\infty}rv(r)<\infty.
\]
This solution is neither crossing nor slowly decaying, so the mixed
case of the cited classification applies.  Since
$(n-m)/(m-1)=1$, there is exactly one $a_*>0$ for which
\begin{equation}\label{eq:KYY-rapid-limit}
 rv(r;a_*)\to L_*\in(0,\infty)
 \qquad(r\to\infty),
\end{equation}
whereas $rv(r;a)\to\infty$ as $r\to\infty$ for $0<a<a_*$, and the
solutions with $a>a_*$ cross zero.
Since $r\sim\frac12\e^{2t}$ as $t\to\infty$, this gives the
claimed decay alternatives, with $C_*=2L_*$, once we verify that
every global positive solution tends to zero.

Every global positive $\psi_a$ is decreasing and therefore has a limit
$\ell\ge0$.  Put $z=\sqrt{-\psi_a'}$.  The radial equation gives
\begin{equation}\label{eq:z-slow-limit}
 z(t)=\frac{1}{\sinh t}
 \int_0^t\psi_a(s)^2\sinh s\dd s.
\end{equation}
If $\ell>0$, l'H\^opital's rule in \eqref{eq:z-slow-limit} yields
$z(t)\to\ell^2$, and hence $\psi_a'(t)\to-\ell^4$, a contradiction.  Thus
$\ell=0$.

Every positive finite-energy solution is radial and has the sharp decay
used above, so, after moving its centre to $O$, it must correspond to
the unique rapid branch.  The previously constructed finite-energy
solution belongs to this branch; hence the rapid solution itself has
finite energy, and it coincides with $\psi_*$.  In particular, no
slowly decaying solution $\psi_a$ with $a<a_*$ has finite energy,
which is the last assertion in \textup{(i)}.
\end{proof}

There are earlier hyperbolic versions for the Laplace--Beltrami
operator.  Mancini and Sandeep \cite[Theorems~1.3--1.4]{ManciniSandeep2008}
prove uniqueness, up to hyperbolic isometries, of positive
finite-energy solutions of the semilinear problem, including
$-\Delta_gu=u^2$ on $\Hh^2$; for $\Hh^n$, $n\ge3$, Bonforte, Gazzola,
Grillo and V\'azquez \cite[Theorem~A]{BonforteEtAl2013} describe the
analogous slow-decay, ground-state and zero-crossing alternatives.
The quasilinear classification of Muratori and Soave
\cite[Theorems~1.3--1.5]{MuratoriSoave2023} concerns critical and
supercritical source powers $q\ge p^*-1$, which excludes the present
source power $2$.

\begin{remark}[Fixing the coefficient in the equation]
For $\lambda>0$, the rescaling $\widetilde\psi=\lambda^{2/3}\psi$
transforms $-\Delta_{3/2,g}\psi=\lambda\psi^2$ into the coefficient-one
equation.  Uniqueness of positive extremals of the quotient is therefore
understood up to a positive factor and hyperbolic isometries.
\end{remark}

\begin{remark}[Regularity at the hyperbolic centre]
\label{rem:profile-regularity}
Let $O$ be the centre of $\psi_*$ and $a_*=\psi_*(O)$.
We use the same symbol for the radial profile and the corresponding
function on $\Hh^2$.
By \cite[Theorem~1.1]{DuttaSandeep2025}, the profile is $\mathcal C^1$
at $t=0$, satisfies $\psi_*'(0)=0$, and is smooth for $t>0$.
In the present case its precise spatial regularity is
\[
 \psi_*\in \mathcal C_{\mathrm{loc}}^{2,1}(\Hh^2)
             \cap \mathcal C^\infty(\Hh^2\setminus\{O\}),
 \qquad
 \psi_*\notin \mathcal C^3(\Hh^2).
\]
Here $\mathcal C^{2,1}_{\mathrm{loc}}$ means that the second derivatives
are locally Lipschitz in smooth coordinates.

Indeed, integrating the radial equation from the centre gives
\[
 \psi_*'(t)
 =-\left(\frac{\int_0^t\psi_*(r)^2\sinh r\dd r}{\sinh t}\right)^2
 =-t^2\left(
       \int_0^1\psi_*(tr)^2\frac{\sinh(tr)}{\sinh t}\dd r
       \right)^2.
\]
The quotient in the last integral extends smoothly to $t=0$,
with value $r$.  Thus the integral preserves $\mathcal C^k$ regularity
of the profile, and the identity bootstraps $\psi_*$ to
$\mathcal C^\infty([0,\infty))$, with one-sided derivatives at zero.
The integral at zero equals $a_*^2/2$.  Consequently, as $t\to0^+$,
\[
 \psi_*'(0)=\psi_*''(0)=0,\qquad
 \psi_*'''(0+)=-\frac{a_*^4}{2},\qquad
 \psi_*(t)=a_*-\frac{a_*^4}{12}t^3+O(t^4).
\]
In geodesic normal coordinates $y$ centred at $O$, we have $t=|y|$.
The radial Hessian formula, together with
$\psi_*'(t)=O(t^2)$, $\psi_*''(t)=O(t)$ and boundedness of
$\psi_*'''$ near zero, shows that the spatial second derivatives
are locally Lipschitz.
However, the restriction to a geodesic through $O$, parametrized
by signed arclength $s$, is $\psi_*(|s|)$.
Its third derivative has limits $-a_*^4/2$ as $s\to0^+$
and $a_*^4/2$ as $s\to0^-$.
Thus the spatial function is not $\mathcal C^3$ at its centre.
\end{remark}

\section{Reduction of the toroidal quotient}\label{sec:reduction}

We now return to vector fields.  A toroidal field $a(\rho,z)e_\theta$
is determined by its stream function $\psi=\rho a$, and the two integrals
entering the quotient $Q$ are hyperbolic integrals of $\psi$.  We make
this correspondence precise on the level of the energy spaces and of the
Euler--Lagrange equations.

Hyperbolic reductions already occur in gauge theory: Witten's
$SO(3)$-invariant self-dual Yang--Mills fields in $\R^4$ reduce to an
abelian Higgs model on $\Hh^2$ \cite{Witten1977}.  This concerns a
different equation and symmetry reduction from the present toroidal
problem in $\R^3$.  For our ansatz the identities are exact for every
$1<p<3$, with $p^*=3p/(3-p)$:
\[
 \int_{\R^3}|\curl A(\psi)|^p\dd x
 =2\pi\int_{\Hh^2}\rho^{3-2p}|\nabla_g\psi|_g^p\dd V_g,
 \qquad
 \int_{\R^3}|A(\psi)|^{p^*}\dd x
 =2\pi\int_{\Hh^2}\rho^{3-p^*}|\psi|^{p^*}\dd V_g.
\]
What is specific to $p=3/2$ is that both reduced integrals are
unweighted hyperbolic integrals.

\begin{proposition}[The $\Tclass$--$\Hh^2$ correspondence]
\label{prop:T-H-correspondence}
For $\psi\in\mathcal D^{1,3/2}(\Hh^2)$ define
\begin{equation}\label{eq:A-from-psi}
 A(\rho,\theta,z):=\frac{\psi(\rho,z)}{\rho}\,e_\theta.
\end{equation}
This defines a linear bijection from $\mathcal D^{1,3/2}(\Hh^2)$ onto
$\Tclass\cap\Dcurlhalf$.  In the weak sense,
\begin{equation}\label{eq:curl-A-psi}
 \curl A
 =-\frac{\psi_z}{\rho}e_\rho
  +\frac{\psi_\rho}{\rho}e_3.
\end{equation}
Moreover,
\begin{align}
 \int_{\R^3}\abs{A}^{3}\dd x
 &=2\pi\int_{\Hh^2}\abs{\psi}^{3}\dd V_g,
 \label{eq:T-L3-H}\\
 \int_{\R^3}\abs{\curl A}^{3/2}\dd x
 &=2\pi\int_{\Hh^2}\abs{\nabla_g\psi}_g^{3/2}\dd V_g.
 \label{eq:T-curl-H}
\end{align}
Consequently,
\begin{equation}\label{eq:T-constant-H}
 S_{3/2,\curlname}^{\Tclass}=\sqrt{2\pi}\,\Lambda_{\Hh^2}.
\end{equation}
\end{proposition}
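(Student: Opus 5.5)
The plan is to work first with smooth compactly supported $\psi$ and then pass to the completions by density on both sides.

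\emph{Smooth case.} For $\psi\in\mathcal C_c^\infty(\Hh^2)$, the field $A=\rho^{-1}\psi e_\theta$ is smooth and compactly supported away from the axis. The cylindrical curl formula $\curl(a e_\theta)=-\partial_z a\,e_\rho+\rho^{-1}\partial_\rho(\rho a)e_3$ with $a=\psi/\rho$ gives \eqref{eq:curl-A-psi} pointwise. Then $|A|^3=|\psi|^3\rho^{-3}$ and $|\curl A|^{3/2}=\rho^{-3/2}|\nabla\psi|^{3/2}$. Integrating with $\dd x=\rho\dd\rho\dd\theta\dd z$ gives
\[
 2\pi\int|\psi|^3\rho^{-2}\dd\rho\dd z
 \quad\text{and}\quad
 2\pi\int\rho^{3/2}|\nabla\psi|^{3/2}\rho^{-2}\dd\rho\dd z ,
\]
which are \eqref{eq:T-L3-H} and \eqref{eq:T-curl-H}, since $|\nabla_g\psi|_g^{3/2}=\rho^{3/2}|\nabla\psi|^{3/2}$.

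\emph{Extension to $\mathcal D^{1,3/2}(\Hh^2)$.} By \eqref{eq:H-continuous-L3-embedding} the map $\psi\mapsto A(\psi)$ is an isometry, up to the factor $(2\pi)^{2/3}$, from the homogeneous norm into the curl seminorm. It is also bounded into $L^3$. It therefore extends continuously to the completion, and the limits are toroidal. Formula \eqref{eq:curl-A-psi} persists in the weak sense, because both sides converge in $L^{3/2}$ and distributional curl is closed. Injectivity and linearity are clear.

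\emph{Surjectivity.} This is the main obstacle. Given $A=a e_\theta\in\Tclass\cap\Dcurlhalf$, put $\psi=\rho a$; then $\psi\in L^3(\Hh^2)$ by the same integral identity. I must show $\psi\in\mathcal D^{1,3/2}(\Hh^2)$, i.e.\ that $\psi$ is approximable by $\mathcal C_c^\infty(\Hh^2)$ functions in energy. The approach has three steps.
\begin{enumerate}[label=(\roman*)]
\item First mollify $A$ equivariantly, using a radial mollifier in $\R^3$, which commutes with rotations about the $e_3$-axis and preserves the toroidal form. This gives smooth toroidal fields converging in $L^3$ and in curl-$L^{3/2}$.
\item Next cut off at large $|x|$ with $\chi(x/R)$. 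The commutator term $\nabla\chi_R\times A$ is bounded in $L^{3/2}$ by $C\|A\|_{L^3(B_{2R}\setminus B_R)}\to0$, exactly as in the proof of Theorem~\ref{thm:general-p-duality}.
\item Finally cut off near the axis $\rho\to0$ with $\chi(\rho/\eps)$; this truncation corresponds to hyperbolic infinity. Here the error is $\eps^{-1}\chi'(\rho/\eps)a\,e_\rho\times e_\theta$, whose $L^{3/2}$ norm is controlled by Hölder:
\[
 \|A\|_{L^3(\{\eps<\rho<2\eps\})}\,\eps^{-1}\,|\{\eps<\rho<2\eps,\ |x|<2R\}|^{1/3}
 \lesssim \|A\|_{L^3(\{\rho<2\eps\})}\,\eps^{-1}(\eps^2R)^{1/3},
\]
and this does not obviously vanish.
\end{enumerate}
To fix step (iii) I would try a logarithmic cutoff in $\rho$, i.e.\ use $\chi(\log\rho/\log\eps)$ over $\eps^2<\rho<\eps$. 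In hyperbolic terms this is a cutoff at hyperbolic distance scale, and the hyperbolic Poincar\'e inequality (equivalently $\psi\in L^{3/2}(\Hh^2)$ controlled by gradient) handles the error. Alternatively, after step (ii), I would argue directly on $\Hh^2$: $\psi$ is smooth with $\nabla_g\psi\in L^{3/2}$ and $\psi\in L^3$. Truncating with hyperbolic cutoffs $\chi(\dist_g(O,\cdot)/R)$, whose gradients are $O(1/R)$ in $g$, leaves an error bounded by $R^{-1}\|\psi\|_{L^{3/2}(\text{annulus})}$. Here $\|\psi\|_{L^{3/2}}$ can be replaced by interpolation or by a Poincar\'e-type bound on annuli, and I expect this second route to close.

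\emph{The constant.} By Lemma~\ref{lem:T-gauge}, $w(A)=0$ for toroidal $A$, so $Q(A)$ has denominator $\|A\|_3^{3/2}$. Combining \eqref{eq:T-L3-H} and \eqref{eq:T-curl-H} gives
\[
 Q(A(\psi))=\frac{2\pi\int|\nabla_g\psi|_g^{3/2}}{\sqrt{2\pi}\,\bigl(\int|\psi|^3\bigr)^{1/2}} .
\]
Taking the infimum over the bijection yields \eqref{eq:T-constant-H}.
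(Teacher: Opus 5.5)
Your smooth-case computation, the extension to the completion, and the passage to \eqref{eq:T-constant-H} via Lemma~\ref{lem:T-gauge} are correct and coincide with the paper. The gap is in surjectivity. You leave step (iii) open, and neither of the repairs you suggest closes it.

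In hyperbolic terms the axis-cutoff error is $\int_{\Hh^2}|\nabla_g\chi|_g^{3/2}|\psi|^{3/2}\dd V_g$, and all you know about $\psi$ is $\psi\in L^3(\Hh^2)$.
\begin{itemize}
\item The logarithmic cutoff has $|\nabla_g\chi|_g\lesssim|\log\eps|^{-1}$. But its transition region $\{\eps^2<\rho<\eps,\ |z|<R\}$ has hyperbolic area of order $R\eps^{-2}$. H\"older therefore only gives a bound of order $|\log\eps|^{-3/2}\eps^{-1}R^{1/2}\|\psi\|_{L^3(\{\rho<\eps\})}^{3/2}$, which need not tend to zero.
\item Hyperbolic annuli $\{R'<t<2R'\}$ have the same problem, because their area grows exponentially in $R'$.
\item Controlling $\|\psi\|_{L^{3/2}}$ by the hyperbolic Poincar\'e inequality is circular. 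That inequality is known only on $\mathcal D^{1,3/2}(\Hh^2)$, and membership there is exactly what you are trying to prove. Nor is there any $L^{3/2}$ information to interpolate from.
\end{itemize}

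The missing observation is that the axis cutoff should be applied to the smooth compactly supported toroidal approximants $A_j=a_je_\theta$ from (i)--(ii), not to the rough field. Such fields vanish on the axis to first order. Since $e_\theta$ has no limit at $\rho=0$, continuity of $A_j$ across the axis forces $a_j(0,z)=0$. Hence $|a_j|\le C_j\rho$, $\psi_j=\rho a_j=O(\rho^2)$ and $\nabla\psi_j=O(\rho)$ on the support.

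With this, your own H\"older estimate closes. One has $\|A_j\|_{L^3(\{\rho<2\eps\}\cap B_{2R})}\lesssim C_j\eps^{5/3}R^{1/3}$, so the commutator term is $O(\eps^{4/3})$. Equivalently, as in the paper, the cutoff loses $O(\eps^5)$ of hyperbolic $L^3$ mass and adds $O(\eps^2)$ of gradient energy. Thus each $\psi_j$ lies in $\mathcal D^{1,3/2}(\Hh^2)$. The norm identities \eqref{eq:T-L3-H}--\eqref{eq:T-curl-H} then make $(\psi_j)$ Cauchy there, and its limit is mapped to $A$.

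Another valid route, also absent from your proposal, is to truncate in the range. The function $\operatorname{sgn}\psi\,(|\psi|-\delta)_+$ has support of finite hyperbolic area because $\psi\in L^3$. You can then use density of $\mathcal C_c^\infty$ in $W^{1,3/2}$ of the complete manifold $\Hh^2$.
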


\begin{proof}
For a toroidal field $A=a(\rho,z)e_\theta$,
\[
 \curl A=-a_z e_\rho+\left(a_\rho+\frac a\rho\right)e_3.
\]
Taking $a=\psi/\rho$ gives \eqref{eq:curl-A-psi}.  Since
$\dd x=\rho\,\dd\rho\,\dd\theta\,\dd z$,
\[
 \int_{\R^3}\abs{A}^3\dd x
 =2\pi\int_0^\infty\int_\R
 \frac{\abs{\psi}^3}{\rho^3}\rho\,\dd z\,\dd\rho
 =2\pi\int_{\Hh^2}\abs{\psi}^3\dd V_g.
\]
Similarly,
\[
 \abs{\curl A}=\frac{\abs{\nabla\psi}}{\rho}
 =\frac{\abs{\nabla_g\psi}_g}{\rho^2},
\]
which yields \eqref{eq:T-curl-H}.  These computations apply first to $\psi\in\mathcal C_c^\infty(\Hh^2)$, for which
the field $A$ is smooth and compactly supported away from the symmetry
axis.  By the continuous embedding \eqref{eq:H-continuous-L3-embedding} and
the two norm identities, the map extends to the homogeneous completion, and
the distributional curl of the limit field carries no contribution
supported on the axis.

To prove that the map is onto, let $A\in\Tclass\cap\Dcurlhalf$.
Cutoff at spatial infinity, followed by convolution with a radial smooth
kernel, approximates $A$ in $L^3$ and its curl in $L^{3/2}$ by smooth,
compactly supported toroidal fields $A_j=a_j(\rho,z)e_\theta$.
The cutoff error is controlled by
$\norm{\nabla\chi_R\times A}{3/2}\le
\norm{\nabla\chi_R}{3}\norm{A}{L^3(\{R<\abs x<2R\})}$,
and radial convolution preserves the toroidal symmetry.
Smoothness at the axis gives $a_j=O(\rho)$, so
$\psi_j:=\rho a_j=O(\rho^2)$ and $\nabla\psi_j=O(\rho)$ locally.
Multiplying $\psi_j$ by a smooth cutoff which is zero for $\rho<\eps$ and
one for $\rho>2\eps$ makes it compactly supported in $\Hh^2$.
The lost hyperbolic $L^3$ mass is $O(\eps^5)$ and the additional gradient
energy is $O(\eps^2)$, using the weights $\rho^{-2}$ and $\rho^{-1/2}$
in \eqref{eq:T-L3-H}--\eqref{eq:T-curl-H}.  Hence each $\psi_j$ lies in
$\mathcal D^{1,3/2}(\Hh^2)$; the norm identities make $(\psi_j)$ Cauchy
there, and its limit maps to $A$.
By Lemma~\ref{lem:T-gauge}, the gauge correction is zero for every
admissible toroidal field.  Taking the infimum over this bijection proves
\eqref{eq:T-constant-H}.
\end{proof}

\begin{proposition}
\label{prop:T-PDE-H}
Let $\psi\in\mathcal D^{1,3/2}(\Hh^2)$ and let $A$ be given by
\eqref{eq:A-from-psi}.  On $\{\rho>0\}$, in the distributional sense,
\begin{equation}\label{eq:curl-nonlinear-A}
 \curl\bigl(\abs{\curl A}^{-1/2}\curl A\bigr)
 =-\frac1{\rho^2}\Delta_{3/2,g}\psi\,e_\theta.
\end{equation}
Hence $A$ solves \eqref{eq:selfdual-PDE} if and only if
\[
 -\Delta_{3/2,g}\psi=\abs{\psi}\psi
 \qquad\text{in }\Hh^2.
\]
\end{proposition}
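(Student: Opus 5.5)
The plan is to compute the left-hand side of \eqref{eq:curl-nonlinear-A} directly with the cylindrical curl formulas, and then justify the computation distributionally.

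\emph{Formal computation.} From \eqref{eq:curl-A-psi} we have $\curl A=\rho^{-1}(-\psi_z e_\rho+\psi_\rho e_3)$ and $|\curl A|=\rho^{-1}|\nabla\psi|$. Hence
\[
 B:=|\curl A|^{-1/2}\curl A
 =\rho^{-1/2}|\nabla\psi|^{-1/2}\bigl(-\psi_z e_\rho+\psi_\rho e_3\bigr).
\]
Write $B=b_\rho e_\rho+b_z e_3$ with $b_\rho=-F\psi_z$, $b_z=F\psi_\rho$ and $F:=\rho^{-1/2}|\nabla\psi|^{-1/2}$. For a poloidal field, $\curl B=(\partial_z b_\rho-\partial_\rho b_z)e_\theta$. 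This gives
\[
 \curl B=-\bigl(\partial_\rho(F\psi_\rho)+\partial_z(F\psi_z)\bigr)e_\theta
 =-\diver_{\rho,z}\bigl(\rho^{-1/2}|\nabla\psi|^{-1/2}\nabla\psi\bigr)e_\theta ,
\]
which equals $-\rho^{-2}\Delta_{3/2,g}\psi\,e_\theta$ by \eqref{eq:H-threehalf}. Next, $|A|A=\rho^{-2}|\psi|\psi\,e_\theta$. Therefore \eqref{eq:selfdual-PDE} holds on $\{\rho>0\}$ exactly when $-\Delta_{3/2,g}\psi=|\psi|\psi$.

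\emph{Weak formulation.} To make this rigorous for $\psi\in\mathcal D^{1,3/2}(\Hh^2)$, I would argue by duality instead of differentiating $B$, which is only in $L^3$.

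1. Test against toroidal fields first. Take $\Phi=\rho^{-1}\varphi(\rho,z)e_\theta$ with $\varphi\in\mathcal C_c^\infty(\Hh^2)$. Then $\langle\curl B,\Phi\rangle=\int B\cdot\curl\Phi\dd x$.
2. Evaluate this integral with \eqref{eq:curl-A-psi} applied to $\varphi$ and $\dd x=\rho\dd\rho\dd\theta\dd z$:
\[
 2\pi\int \rho^{-1/2}|\nabla\psi|^{-1/2}\nabla\psi\cdot\nabla\varphi\,\rho^{-1}\,\rho\,\dd\rho\dd z ,
\]
which is $2\pi\int_{\Hh^2}|\nabla_g\psi|_g^{-1/2}\nabla_g\psi\cdot\nabla_g\varphi\dd V_g$.
3. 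Compare with the right-hand side. We have $\int|A|A\cdot\Phi\dd x=2\pi\int_{\Hh^2}|\psi|\psi\varphi\dd V_g$. These are exactly the weak forms on both sides.
4. Handle general test fields $\Phi\in\mathcal C_c^\infty(\{\rho>0\};\R^3)$. Decompose $\Phi$ into Fourier modes in $\theta$. Since $B$ and $|A|A$ are $SO(2)$-invariant, only the $\theta$-average of $\Phi$ contributes. That average is $\bar\Phi=\phi_\rho e_\rho+\phi_\theta e_\theta+\phi_z e_3$. The curl of its poloidal part is toroidal and pairs to zero with the poloidal field $B$. The poloidal part also pairs to zero with the toroidal $|A|A$. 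So only toroidal test fields matter.

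\emph{Main obstacle.} The delicate point is the averaging step: it requires that the pairing with the averaged field and with the averaged curl agree. This holds because $\curl$ commutes with rotations. All integrals converge absolutely, since $B\in L^3$, $|A|A\in L^{3/2}$ and the test fields are bounded with compact support away from the axis.
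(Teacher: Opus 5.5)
Your computation of \eqref{eq:curl-nonlinear-A} on $\{\rho>0\}$ is correct and is essentially the paper's. It uses the same cylindrical curl formula, justifies the identity weakly, and reduces general test fields to toroidal ones by angular averaging. The paper approximates $\psi$ in the energy norm rather than pairing directly; your duality version is, if anything, cleaner.

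The gap is in the second assertion. Equation \eqref{eq:selfdual-PDE} is posed on all of $\R^3$. The proposition is used later precisely to conclude that $A_*$ solves it \emph{across the symmetry axis}, which is what the duality theorem and the Nehari identity require. You only prove that \eqref{eq:selfdual-PDE} holds on $\{\rho>0\}$ exactly when the scalar equation holds, and every test field you use is supported away from the axis. For the implication ``scalar equation $\Rightarrow$ $A$ solves \eqref{eq:selfdual-PDE}'', this leaves open a distribution $\curl B-|A|A$ supported on the line $\{\rho=0\}$. Symmetry alone does not exclude such axial terms: for instance $\curl(\rho^{-1}e_\theta)$ vanishes on $\{\rho>0\}$ but equals $2\pi\delta_{\{\rho=0\}}e_3$ on $\R^3$.

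The missing step is a removable-singularity argument. It uses $B\in L^3$ and the fact that a line in $\R^3$ has zero $W^{1,3/2}$-capacity.
\begin{itemize}
\item Fix $\Phi\in\mathcal C_c^\infty(\R^3;\R^3)$ with support $K$, and test the equation on $\{\rho>0\}$ with $\chi_\eps(\rho)\Phi$. Here $\chi_\eps=0$ for $\rho<\eps$, $\chi_\eps=1$ for $\rho>2\eps$, and $|\chi_\eps'|\le C/\eps$.
\item Since $\curl(\chi_\eps\Phi)=\chi_\eps\curl\Phi+\nabla\chi_\eps\times\Phi$, the only new term is $\int B\cdot(\nabla\chi_\eps\times\Phi)\dd x$. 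It is bounded by $\|\Phi\|_\infty\|B\|_{L^3(K)}\|\nabla\chi_\eps\|_{L^{3/2}(K)}$.
\item The set $\{\eps<\rho<2\eps\}\cap K$ has volume $O(\eps^2)$, so $\|\nabla\chi_\eps\|_{L^{3/2}(K)}=O(\eps^{1/3})\to0$.
\item The remaining terms converge by dominated convergence, because $B\in L^3$ and $|A|A\in L^{3/2}$.
\end{itemize}
This gives $\curl B=|A|A$ in $\mathscr D'(\R^3)$ with no axial source. Your converse direction (global equation $\Rightarrow$ scalar equation, by testing with toroidal fields supported away from the axis) is fine as written.
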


\begin{proof}
Put
\[
 m:=\rho^{-1/2}\abs{\nabla\psi}^{-1/2}
 \quad\text{where }\nabla\psi\ne0,
 \qquad m:=0\quad\text{where }\nabla\psi=0.
\]
Using \eqref{eq:curl-A-psi},
\[
 \abs{\curl A}^{-1/2}\curl A
 =-m\psi_z e_\rho+m\psi_\rho e_3.
\]
The azimuthal component of its curl is
\[
 -\partial_z(m\psi_z)-\partial_\rho(m\psi_\rho)
 =-\diver_{\rho,z}
 \bigl(\rho^{-1/2}\abs{\nabla\psi}^{-1/2}\nabla\psi\bigr),
\]
which is \eqref{eq:curl-nonlinear-A} by \eqref{eq:H-threehalf}.  At zeros
of $\nabla\psi$ the nonlinear vector expression is assigned its continuous
value zero.  Approximation in the hyperbolic energy norm proves the
distributional identity on $\{\rho>0\}$ for every admissible $\psi$.
Since $\abs{A}A=\rho^{-2}\abs{\psi}\psi\,e_\theta$, the two weak equations
are equivalent away from the axis; angular averaging reduces arbitrary
vector tests there to their toroidal component.

If the scalar equation holds, put
$B=\abs{\curl A}^{-1/2}\curl A\in L^3$ and
$F=\abs A A\in L^{3/2}$.  To extend $\curl B=F$ across the axis, test
with $\chi_\eps(\rho)\Phi$, where $\Phi\in\mathcal C_c^\infty(\R^3,\R^3)$
and $\chi_\eps$ vanishes for $\rho<\eps$ and equals one for
$\rho>2\eps$.  On the fixed support of $\Phi$,
\[
 \norm{\nabla\chi_\eps}{L^{3/2}}=O(\eps^{1/3}).
\]
Hölder's inequality therefore makes the boundary term involving
$B\cdot(\nabla\chi_\eps\times\Phi)$ tend to zero.  The other terms
converge by local integrability.  Thus the vector equation holds on all of
$\R^3$, without an axial source.  Conversely, testing the global vector
equation by toroidal fields supported away from the axis gives the scalar
weak equation; density extends its test class to
$\mathcal D^{1,3/2}(\Hh^2)$.
\end{proof}

\section{Nonlinear self-duality and the poloidal minimizer}\label{sec:duality}

At $p=3/2$, Theorem~\ref{thm:general-p-duality} makes
$\mathfrak D(u)=|\curl u|^{-1/2}\curl u$ an involution on finite-energy
solutions and interchanges the two cylindrical classes.

Let $\psi_*$ be the positive hyperbolic minimizer centred at $O$ and
normalized by $-\Delta_{3/2,g}\psi_*=\psi_*^2$, and put
$A_*=\rho^{-1}\psi_*e_\theta$.
The correspondence and weak-equation identities in
Propositions~\ref{prop:T-H-correspondence} and~\ref{prop:T-PDE-H}
show that $A_*$ is a toroidal extremal solving
\eqref{eq:selfdual-PDE}.  Define
\begin{equation}\label{eq:U-star-dual-definition}
 U_*:=\mathfrak D(A_*)=|\curl A_*|^{-1/2}\curl A_*.
\end{equation}
Theorems~\ref{thm:general-p-duality}
and~\ref{thm:general-p-constant-duality} show directly that $U_*$
solves the same equation and is a poloidal extremal.
We use these facts here to describe its geometry and prove the
zero-mode exclusion.  Their role in the sharp-constant identities
is detailed in Section~\ref{sec:proof-main}.

\subsection{The semi-explicit Euclidean formula}

We now express $U_*=\mathfrak D(A_*)$ in terms of the radial hyperbolic
profile.  Define
\begin{equation}\label{eq:N-field}
 N(x):=
 \begin{pmatrix}
  2x_1z\\
  2x_2z\\
  1+z^2-\rho^2
 \end{pmatrix}
 =2\rho z\,e_\rho+(1+z^2-\rho^2)e_3.
\end{equation}

The elementary identities
\[
 \abs{N}^2
 =(1+z^2-\rho^2)^2+4\rho^2z^2
 =s^2-4\rho^2
 =s^2(1-\eta^2)
\]
and \eqref{eq:distance-formula} imply
\begin{equation}\label{eq:N-hyperbolic-norm}
 \abs{N}=2\rho\sinh t.
\end{equation}
For $\rho>0$ and $t>0$, differentiating
\eqref{eq:distance-formula} gives
\[
 t_\rho=-\frac{1+z^2-\rho^2}{2\rho^2\sinh t},
 \qquad
 t_z=\frac{z}{\rho\sinh t}.
\]
Consequently,
\[
 -t_z e_\rho+t_\rho e_3
 =-\frac{N}{2\rho^2\sinh t}.
\]

For $A_*=\psi_*(t)\rho^{-1}e_\theta$, formula
\eqref{eq:curl-A-psi} gives
\begin{equation}\label{eq:curl-A-star-N}
 \curl A_*
 =-\frac{\psi_*'(t)}{2\rho^3\sinh t}N.
\end{equation}
Since $\psi_*'<0$, \eqref{eq:N-hyperbolic-norm} yields
\begin{equation}\label{eq:curl-A-star-norm}
 \abs{\curl A_*}=\frac{-\psi_*'(t)}{\rho^2}.
\end{equation}
Inserting \eqref{eq:curl-A-star-N}--\eqref{eq:curl-A-star-norm} into
\eqref{eq:U-star-dual-definition} gives
\begin{equation}\label{eq:U-star-intermediate}
 U_*(x)=\frac{\sqrt{-\psi_*'(t)}}{2\rho^2\sinh t}\,N(x).
\end{equation}
Using $s=2\rho\cosh t=2\rho/\eta$ we arrive at the following formula.
The continuous endpoint extensions established below define the field
also on the axis and the circle $\{\rho=1,z=0\}$.

\begin{theorem}[Semi-explicit form of the $\Sclass$-extremal]
\label{thm:semi-explicit}
Let $\psi_*$ be the positive finite-energy solution of
\eqref{eq:hyperbolic-PDE} centred at $O$, as in
Theorem~\ref{thm:hyperbolic-profile-uniqueness}.  Define
\begin{equation}\label{eq:G-from-psi}
 G_*(\eta):=
 \frac{2\sqrt{-\psi_*'(t)}}
      {\eta\sqrt{1-\eta^2}},
 \qquad
 t=\operatorname{arcosh}\frac1\eta,
 \quad 0<\eta<1.
\end{equation}
We use the continuous extension of $G_*$ to $[0,1]$.
Then
\begin{equation}\label{eq:U-star-explicit}
  U_*(x)=
 \frac{G_*\!\left(\dfrac{2\rho}{1+\rho^2+z^2}\right)}
 {(1+\rho^2+z^2)^2}
 \begin{pmatrix}
  2x_1z\\[1mm]
  2x_2z\\[1mm]
  1+z^2-\rho^2
 \end{pmatrix}.
\end{equation}
The field $U_*$ has the following properties:
\begin{enumerate}[label=\textup{(\roman*)},leftmargin=9mm]
 \item $U_*\in\Sclass\cap\Mman$ and $w(U_*)=0$;
 \item $U_*$ solves the full equation \eqref{eq:selfdual-PDE};
 \item $U_*$ realizes $S_{3/2,\curlname}^{\Sclass}$;
 \item its coefficients in \eqref{eq:S-form} are
 \begin{align*}
  \alpha_*(\rho,z)
  &=\frac{2z}{s^2}G_*\!\left(\frac{2\rho}{s}\right),
  \\
  \beta_*(\rho,z)
  &=\frac{1+z^2-\rho^2}{s^2}
    G_*\!\left(\frac{2\rho}{s}\right),
 \end{align*}
 and therefore
 \[
  \alpha_*(\rho,-z)=-\alpha_*(\rho,z),
  \qquad
  \beta_*(\rho,-z)=\beta_*(\rho,z).
\]
\end{enumerate}
\end{theorem}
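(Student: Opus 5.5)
The plan starts from the intermediate formula \eqref{eq:U-star-intermediate}, $U_*=\sqrt{-\psi_*'(t)}\,N/(2\rho^2\sinh t)$, valid for $\rho>0$ and $t>0$. Since $s=2\rho\cosh t=2\rho/\eta$, we have $\rho=\eta s/2$ and $\sinh t=\sqrt{1-\eta^2}/\eta$. Hence
\[
 2\rho^2\sinh t=\frac{\eta^2s^2}{2}\cdot\frac{\sqrt{1-\eta^2}}{\eta}
 =\frac{s^2\eta\sqrt{1-\eta^2}}{2},
\]
so the coefficient of $N$ equals $s^{-2}G_*(\eta)$ with $G_*$ as in \eqref{eq:G-from-psi}. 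This gives \eqref{eq:U-star-explicit} off the axis and the circle $\{\rho=1,z=0\}$.

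Next I would prove that $G_*$ extends continuously to $[0,1]$. I would use $\sqrt{-\psi_*'(t)}=\mathcal J_{\psi_*}(t)/\sinh t$, which comes from integrating the radial equation as in \eqref{eq:z-slow-limit}. This rewrites $G_*=2\mathcal J_{\psi_*}(t)/(1-\eta^2)$ and matches the introduction.

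At $\eta\to1$, that is $t\to0$, we have $\mathcal J(t)\sim a_*^2t^2/2$ and $1-\eta^2=\tanh^2t\sim t^2$. This gives the limit $\psi_*(0)^2$.

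At $\eta\to0$, that is $t\to\infty$, we have $\mathcal J(t)\to\mathcal J(\infty)$. This is finite because $\psi_*\asymp\e^{-2t}$ by Theorem~\ref{thm:hyperbolic-profile-uniqueness} or \cite{DuttaSandeep2025}.

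With this extension, the right side of \eqref{eq:U-star-explicit} is a bounded continuous field on $\R^3$. Since $|N|\le s$, it is $O(|x|^{-2})$ at infinity, and it agrees almost everywhere with $\mathfrak D(A_*)$. Continuity across the axis works because $N$ is smooth there and $\eta\to0$, so $G_*\to G_*(0)$.

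For (i)--(iii) I would invoke Theorem~\ref{thm:general-p-duality} at $p=\widehat p=3/2$. The field $A_*$ is a nonzero finite-energy solution by Propositions~\ref{prop:T-H-correspondence} and~\ref{prop:T-PDE-H}, so $U_*=\mathfrak D(A_*)\in\Dcurlhalf$ solves \eqref{eq:selfdual-PDE}, which is (ii).

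For (i): $U_*$ lies in $\Sclass$ because $N$ has only $e_\rho$ and $e_3$ components depending on $(\rho,z)$. Taking the divergence of \eqref{eq:selfdual-PDE} gives $\diver(|U_*|U_*)=0$, so $U_*\in\Mman$. By uniqueness of the gauge minimizer, $w(U_*)=0$.

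For (iii): \eqref{eq:dual-energy-quotients} gives $Q(U_*)=Q(A_*)=S^{\Tclass}_{3/2,\curlname}$, since $A_*$ is a toroidal extremal via \eqref{eq:T-constant-H}. Theorem~\ref{thm:general-p-constant-duality} at $p=3/2$ gives $S^{\Sclass}=S^{\Tclass}$, so $U_*$ realizes $S^{\Sclass}_{3/2,\curlname}$.

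Part (iv) is read off directly. Since $N=2z(x_1,x_2,0)+(1+z^2-\rho^2)e_3$, the coefficients are the stated $\alpha_*$ and $\beta_*$. The parities in $z$ follow because $s$ and $\eta$ are even in $z$.

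The main obstacle I expect is the endpoint analysis of $G_*$ together with the claim that the continuous representative coincides with the distributional object $\mathfrak D(A_*)$. The first needs the precise small-$t$ expansion $\psi_*'(t)\sim-a_*^4t^2/4$ from Remark~\ref{rem:profile-regularity} and the finiteness of the total flux. The second is harmless, since the axis and the circle have measure zero.
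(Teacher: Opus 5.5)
Your proposal is correct and follows the paper's proof. You derive \eqref{eq:U-star-explicit} from \eqref{eq:U-star-intermediate} via $s=2\rho/\eta$, obtain (i)--(iii) from Theorems~\ref{thm:general-p-duality} and~\ref{thm:general-p-constant-duality} applied to the toroidal extremal $A_*$, read (iv) off the decomposition of $N$, and your endpoint analysis through $G_*=2\mathcal J_{\psi_*}(t)/(1-\eta^2)$, giving $G_*(1)=a_*^2$ and $G_*(0)=2\mathcal J_{\psi_*}(\infty)$, matches the paper's remark following the theorem.
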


\begin{proof}
Formula \eqref{eq:U-star-explicit} is equivalent to
\eqref{eq:U-star-intermediate}.
Propositions~\ref{prop:T-H-correspondence} and~\ref{prop:T-PDE-H}
make $A_*$ a toroidal extremal solving \eqref{eq:selfdual-PDE}.
Theorems~\ref{thm:general-p-duality}
and~\ref{thm:general-p-constant-duality} then give assertions
\textup{(i)}--\textup{(iii)} for $U_*=\mathfrak D(A_*)$;
the nonlinear gauge condition is part of the solution duality.
Assertion~\textup{(iv)} follows from the decomposition of $N$
in \eqref{eq:N-field}.
\end{proof}

\begin{remark}[Endpoint values of the poloidal factor]
The factor $G_*$ extends continuously to the endpoints.
Substituting the centre expansion from Remark~\ref{rem:profile-regularity}
into \eqref{eq:G-from-psi}, with $a_*=\psi_*(0)$, gives
\[
 G_*(1)=a_*^2.
\]
At hyperbolic infinity the value of $G_*$ is determined by the flux
\[
 \mathcal J(t):=\sinh t\sqrt{-\psi_*'(t)}
   =\int_0^t\psi_*(s)^2\sinh s\dd s ,
\]
which increases to a finite limit $\mathcal J_\infty$ as $t\to\infty$
because $\psi_*(t)=O(\e^{-2t})$.  In the same limit,
$-\psi_*'(t)\sim4\mathcal J_\infty^2\e^{-2t}$ and
$\psi_*(t)\sim2\mathcal J_\infty^2\e^{-2t}$, and
\eqref{eq:G-from-psi} gives
\[
 G_*(0)=2\mathcal J_\infty=\sqrt{2C_*}=2\sqrt{L_*},
\]
where $C_*$ and $L_*$ are the limits in
\eqref{eq:rapid-hyperbolic-decay} and \eqref{eq:KYY-rapid-limit}.
\end{remark}

\subsection{Absence of zero modes}\label{subsec:no-zero}

We now prove Theorem~\ref{thm:extremals-no-zero}, adapting the angular
obstruction underlying \cite[Theorem~1.1]{Elton2018}.
Part~\textup{(a)} is an angular identity for bounded axisymmetric
poloidal potentials, which we isolate as a proposition; for
part~\textup{(b)} an explicit conformal map reduces the toroidal
extremal to that case, and we verify the $L^2$ integrability of the
transformed spinor, which does not follow from conformal covariance
alone.

The extremals are indeed not smooth.  Along $x=(1,0,z)$, as $z\to0$, one has
$t=2\operatorname{arsinh}(|z|/2)=|z|+O(|z|^3)$, so
Remark~\ref{rem:profile-regularity} gives
\[
 (A_*)_2(1,0,z)
 =a_*-\frac{a_*^4}{12}|z|^3+O(|z|^4).
\]
Thus $A_*$ is not $\mathcal C^3$ across $\{\rho=1,z=0\}$.
Moreover, $\curl U_*=|A_*|A_*=\rho^{-2}\psi_*(t)^2e_\theta$,
whose second Cartesian component on the same line is
$a_*^2-a_*^5|z|^3/6+O(|z|^4)$.
Hence $\curl U_*$ is not $\mathcal C^3$, so $U_*$ is not $\mathcal C^4$ there.
In particular, neither potential satisfies the smoothness assumption
in Elton's stated theorem.

\begin{proposition}[Bounded poloidal potentials have no zero modes]
\label{prop:poloidal-no-zero}
Let $B=b_\rho(\rho,z)e_\rho+b_z(\rho,z)e_3\in L^\infty(\R^3;\R^3)$.
Then $\ker_{L^2}\mathcal D_{cB}=\{0\}$ for every $c\in\R$, and
$\mathcal D_{cB}$ is self-adjoint on $H^1(\R^3;\mathbb C^2)$.
\end{proposition}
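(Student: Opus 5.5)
Self-adjointness comes first and is routine: $\mathcal D_0=\boldsymbol\sigma\cdot(-i\nabla)$ is self-adjoint on $H^1$. Multiplication by $c\,\boldsymbol\sigma\cdot B$ is a bounded symmetric operator, since $B\in L^\infty$ and $B$ is real. By Kato--Rellich, $\mathcal D_{cB}$ is self-adjoint on $H^1(\R^3;\mathbb C^2)$. It then suffices to show that any $\Psi\in H^1$ with $\mathcal D_{cB}\Psi=0$ vanishes.

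For the kernel I will use the angular obstruction. Let $J_3=L_3+\tfrac12\sigma_3$, where $L_3=-i\partial_\theta$, be the total angular momentum about the $x_3$-axis. Since $B$ is axisymmetric, $\mathcal D_{cB}$ commutes with $J_3$. Decomposing $L^2$ into the eigenspaces $J_3=m$, $m\in\tfrac12+\mathbb Z$, which are preserved by $\mathcal D_{cB}$ and by $H^1$, I may assume $\Psi=(f(\rho,z)e^{i(m-1/2)\theta},\,g(\rho,z)e^{i(m+1/2)\theta})$. The key structural fact is that $B$ is poloidal: $\boldsymbol\sigma\cdot B$ involves only $\sigma_3$ and the combination $\sigma_\rho=\cos\theta\,\sigma_1+\sin\theta\,\sigma_2$. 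Writing $\boldsymbol\sigma\cdot(-i\nabla)$ in cylindrical form, the equation splits into a part built from $\sigma_\theta\rho^{-1}(-i\partial_\theta)$ plus a part involving only $\sigma_\rho,\sigma_3$ with $\partial_\rho,\partial_z$ and $B$. I intend to mimic Elton: the anticommutation $\{\sigma_\theta,\sigma_\rho\}=\{\sigma_\theta,\sigma_3\}=0$ should make a quadratic identity split. Concretely, I write $\mathcal D_{cB}=P+\rho^{-1}K$, where $P=\sigma_\rho(-i\partial_\rho-cb_\rho)+\sigma_3(-i\partial_z-cb_z)$ plus a lower-order $\rho^{-1}$ term, and $K$ acts only in $\theta$ as $\sigma_\theta(J_3)$-type multiplication with eigenvalue magnitude $|m|\ge\tfrac12$.

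Next I compute
\[
0=\|\rho^{1/2}\mathcal D_{cB}\Psi\|^2 ,
\]
or better, I pair $\mathcal D_{cB}\Psi=0$ against $\sigma_\theta\rho^{-1}\Psi$-type test functions. The cross terms $\mathrm{Re}\langle P\Psi,\rho^{-1}K\Psi\rangle$ should vanish after integration by parts in $(\rho,z)$: the $B$-terms drop out because $cB$ is real and $\sigma_\theta\sigma_\rho$ and $\sigma_\theta\sigma_3$ are anti-Hermitian up to a factor. This leaves $\int\rho^{-2}|K\Psi|^2\le0$, hence $\Psi=0$ since $|m|\ge\tfrac12$. An equivalent route is the known trick of observing that $\Psi$ and $\sigma_\theta$-rotated $\Psi$ both lie in kernels whose difference forces $\rho^{-1}\Psi\equiv0$. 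I will commit to the quadratic identity, done first for smooth compactly supported $\Psi$ and then extended.

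The main obstacle is justification. The weight $\rho^{-1}$ is singular on the axis and $B$ is only bounded and measurable, so both the integration by parts and the finiteness of $\int\rho^{-2}|\Psi|^2$ must be controlled. My plan is to use a cutoff $\chi_\eps(\rho)\chi_R(|x|)$, as in Proposition~\ref{prop:T-PDE-H}, and test with $\chi^2\sigma_\theta\Psi/\rho$. The commutator terms contain $\nabla\chi_\eps=O(\eps^{-1})$ on $\{\eps<\rho<2\eps\}$, and I plan to bound them by Hardy-type control in two dimensions transverse to the axis, using $\Psi\in H^1$. Since only the first derivatives of $\Psi$ and the boundedness of $B$ enter, no smoothness of $B$ is needed; this is exactly what Elton's smooth hypothesis replaces. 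If the direct $\rho^{-2}$ bound fails, I will fall back on Fatou: the identity with cutoffs gives $\int\chi^2\rho^{-2}|K\Psi|^2\le o(1)$, and monotone convergence then yields $\Psi=0$.
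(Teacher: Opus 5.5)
Your self-adjointness argument (Kato--Rellich with the bounded Hermitian perturbation $c\boldsymbol\sigma\cdot B$), the reduction to a single sector $J_3=m$ with $m\in\mathbb Z+\frac12$, and your splitting all agree with the paper. On that sector the splitting reads $\mathcal D_{cB}=P+\frac m\rho\sigma_\theta$ with $P=\sigma_\rho\bigl(-i\partial_\rho-\frac{i}{2\rho}-cb_\rho\bigr)+\sigma_3(-i\partial_z-cb_z)$.

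\textbf{The gap is the weight you commit to.} For real $\chi$ supported away from the axis, integration by parts gives
\[
 2\,\mathrm{Re}\,\ip{\chi\sigma_\theta\Psi}{\mathcal D_{cB}\Psi}
 =\int_{\R^3}\Bigl(\frac{2m\chi}{\rho}\abs{\Psi}^2
 +\Psi^*\bigl((\partial_\rho\chi)\sigma_3-(\partial_z\chi)\sigma_\rho\bigr)\Psi\Bigr)\dd x .
\]
The $B$-terms do drop out. The derivative terms, however, do not vanish: they leave exactly the derivatives of the weight.
\begin{itemize}
\item With your test weight $\chi=\rho^{-1}$ (times cutoffs), the term $\partial_\rho\rho^{-1}=-\rho^{-2}$ has the same order as the angular term. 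The formal identity becomes $\int_{\R^3}\rho^{-2}\Psi^*(2m-\sigma_3)\Psi\dd x=0$. For $m=\pm\frac12$ the matrix $2m-\sigma_3$ has a zero eigenvalue, on the component whose orbital frequency $m\mp\frac12$ is zero, so nothing follows.
\item Expanding $\|\mathcal D_{cB}\Psi\|_2^2$ is no better. Your cross term $2\,\mathrm{Re}\,\ip{P\Psi}{\rho^{-1}K\Psi}$ equals $-m\int\rho^{-2}\Psi^*\sigma_3\Psi\dd x$, not zero. The resulting matrix $m^2-m\sigma_3$ is indefinite for $m=\pm\frac12$.
\item The justification also fails. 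That component need not vanish on the axis, and $\int\rho^{-2}\abs{\Psi}^2\dd x$ can be infinite for $\Psi\in H^1$, since there is no Hardy inequality in two transverse dimensions. The axis cutoff error, of size $\eps^{-1}\int_{\{\eps<\rho<2\eps\}}\rho^{-1}\abs{\Psi}^2\dd x$, stays of order one for spinors that do not vanish on the axis. So the Fatou fallback yields nothing.
\end{itemize}

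\textbf{The remedy is the option you set aside.} Take $\chi=\chi_{\eps,R}$ itself, i.e.\ pair with $\chi\sigma_\theta\Psi$ with no factor $\rho^{-1}$. This is what your identity $0=\|\rho^{1/2}\mathcal D_{cB}\Psi\|_2^2$ amounts to: its cross term $2m\,\mathrm{Re}\,\ip{P\Psi}{\sigma_\theta\Psi}$ vanishes up to cutoff terms.

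The angular term is then $2m\int\chi\rho^{-1}\abs{\Psi}^2\dd x$. The only weighted bound needed is $\int_{\R^3}\rho^{-1}\abs{\Psi}^2\dd x\le2\|\Psi\|_2\|\nabla\Psi\|_2$, which holds on $H^1(\R^3)$ and follows from $\diver e_\rho=\rho^{-1}$ by one integration by parts.

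Since $\abs{\nabla\chi}\le C\rho^{-1}$ on $\{\eps<\rho<2\eps\}$ and $\abs{\nabla\chi}\le C/R$ on the outer annulus, the cutoff errors are at most
\[
 C\int_{\{\rho<2\eps\}}\rho^{-1}\abs{\Psi}^2\dd x+CR^{-1}\|\Psi\|_2^2\to0 .
\]
This gives $2m\int\rho^{-1}\abs{\Psi}^2\dd x=0$, hence $\Psi=0$ because $m\ne0$. This is the paper's proof, written there in the rotating spin frame $S(\theta)=\exp(-i\theta\sigma_3/2)$, where $\sigma_\rho$ and $\sigma_\theta$ become $\sigma_1$ and $\sigma_2$.
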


\begin{proof}
\emph{Step 1: angular decomposition for a bounded poloidal potential.}
Fix $c\in\R$.  The coefficients of $B$ are independent of $\theta$,
and there is no $e_\theta$ component.  If $\Psi\in L^2(\R^3;\mathbb C^2)$ solves
$\mathcal D_{cB}\Psi=0$ distributionally, then
$\mathcal D_0\Psi=c\boldsymbol\sigma\cdot B\Psi\in L^2$.
The Pauli identity
$|(\boldsymbol\sigma\cdot\xi)v|^2=|\xi|^2|v|^2$
and Fourier transformation give
\[
 \|\nabla\Psi\|_2=\|\mathcal D_0\Psi\|_2
 \le |c|\|B\|_\infty\|\Psi\|_2.
\]
Hence $\Psi\in H^1$.  Also, multiplication by
$c\boldsymbol\sigma\cdot B$ is bounded and Hermitian, so the
self-adjoint operator $\mathcal D_{cB}$ has domain $H^1$.

To exploit the symmetry, use the rotating spin frame
$S(\theta)=\exp(-i\theta\sigma_3/2)$.
Since $S(\theta+2\pi)=-S(\theta)$ whereas $\Psi$ is periodic,
$S(\theta)^*\Psi$ is antiperiodic.  Its Fourier frequencies are
therefore half-integers, and the angular modes have the form
\[
 \Psi_m=S(\theta)\e^{im\theta}u_m(\rho,z),
 \qquad m\in\mathbb Z+\tfrac12,
 \qquad u_m:(0,\infty)\times\R\to\mathbb C^2.
\]
The projections onto these modes preserve $H^1$ and commute with
$\mathcal D_{cB}$: they are averages of simultaneous rotations of
space and spin, under which $B$ is invariant.
Consequently each $\Psi_m$ is an $H^1$ solution of the zero-mode equation.
In the rotating frame its equation is $H_mu_m=0$, where
\[
 H_m=
 -i\sigma_1\left(\partial_\rho+\frac1{2\rho}\right)
 -i\sigma_3\partial_z
 -c(b_\rho\sigma_1+b_z\sigma_3)
 +\frac m\rho\sigma_2.
\]
The term $1/(2\rho)$ comes from differentiating the spin frame.
It also makes the radial derivative formally skew-adjoint for
the cylindrical measure $\rho\dd\rho\dd z$, so $H_m$ is
formally symmetric for that measure.

\emph{Step 2: the angular identity and its boundary terms.}
First, the distributional identity $\diver e_\rho=1/\rho$,
integration by parts and Cauchy--Schwarz give
\[
 \int_{\R^3}\frac{|\Psi_m|^2}{\rho}\dd x
 \le 2\|\Psi_m\|_2\|\nabla\Psi_m\|_2<\infty.
\]
This is initially proved for smooth compactly supported spinors
and extends to $H^1$ by density.
Since the left-hand side equals
$2\pi\int_{\R}\int_0^\infty|u_m|^2\dd\rho\dd z$,
the unweighted integral of $|u_m|^2$ on the half-plane is finite.

Let $\chi\in\mathcal C_c^\infty((0,\infty)\times\R)$ be real.
Testing $H_mu_m=0$ against $\chi\sigma_2u_m$ and taking twice
the real part gives
\[
 \begin{split}
 0={}&2m\int_{\R}\int_0^\infty
             \chi|u_m|^2\dd\rho\dd z\\
 &+\int_{\R}\int_0^\infty
 \rho\,u_m^*
 \bigl((\partial_\rho\chi)\sigma_3
       -(\partial_z\chi)\sigma_1\bigr)u_m\dd\rho\dd z.
 \end{split}
\]
Here $u_m^*$ denotes the conjugate transpose.
To see the cancellation, all terms of $H_m$ except
$(m/\rho)\sigma_2$ anticommute with $\sigma_2$.
The surviving multiplication term is $2m\chi/\rho$;
differentiating $\chi$ gives the second integral.
The test is legitimate by local $H^1$ approximation away from
$\rho=0$, and no derivative of the bounded coefficients of $B$
is used.

Choose $0\le\chi=\chi_{\eps,R}\le1$ which vanishes for
$\rho<\eps$ and for $|x|>2R$, and equals one when
$\rho>2\eps$ and $|x|<R$.
Choose it as a product of an axis cutoff and a spatial cutoff.
The derivative of the first factor is bounded by $C/\rho$ on
$\{\eps<\rho<2\eps\}$, and that of the second by $C/R$ on the outer annulus.
The absolute value of the second integral is then at most
\[
 C\int_{\{\rho<2\eps\}}\frac{|\Psi_m|^2}{\rho}\dd x
 +\frac{C}{R}\|\Psi_m\|_2^2.
\]
The first term tends to zero as $\eps\to0$ by the weighted
integrability proved above; the second tends to zero as $R\to\infty$.
Dominated convergence in the first integral yields
\[
 0=2m\int_{\R}\int_0^\infty|u_m|^2\dd\rho\dd z.
\]
There is no zero angular frequency, since
$m\in\mathbb Z+\tfrac12$.  Thus every mode vanishes, and
completeness of the angular Fourier decomposition gives $\Psi=0$.
\end{proof}

The absence of an azimuthal component is essential to this argument:
a term $b_\theta e_\theta$ would replace $m/\rho$ by
$m/\rho-cb_\theta$, whose sign need not be fixed; the Hopf contact
geometry of the Loss--Yau potential \cite{LossYau1986} shows that
zero modes do occur in that case.

\begin{proof}[Proof of Theorem~\ref{thm:extremals-no-zero}]
Part~\textup{(a)} is Proposition~\ref{prop:poloidal-no-zero}.  For
part~\textup{(b)} we first treat $A_*$ and $U_*$ themselves.

\emph{Step 3: boundedness of the extremals and the conformal pullback.}
Define
\[
 h(q):=q^{-2}\psi_*(\operatorname{arcosh}(1/q)),
 \qquad 0<q\le1.
\]
Since $q=\sech t\sim2\e^{-t}$ as $t\to\infty$, the endpoint estimates give
$h(0):=C_*/4$ and $h(1)=a_*$.
Thus $h\in \mathcal C([0,1])$, and the preceding endpoint analysis
also gives $G_*\in \mathcal C([0,1])$.
Recalling $s=1+|x|^2$, $\eta=2\rho/s$ and $|N|\le s$, we have
\[
 |A_*|=\frac{4\rho}{s^2}|h(\eta)|,
 \qquad
 |U_*|\le\frac{\|G_*\|_\infty}{s}.
\]
Both potentials are therefore bounded, which proves the
operator-domain assertion in the theorem.
Since $U_*$ is axisymmetric and poloidal,
Proposition~\ref{prop:poloidal-no-zero} already excludes zero modes
for $cU_*$.

For $A_*$, consider the M\"obius map
\[
 M(x)=\frac{(2z,\,1-|x|^2,\,2x_1)}{|x+e_2|^2},
 \qquad
 \lambda(x)=\frac2{|x+e_2|^2},
 \qquad e_2=(0,1,0).
\]
The identity
\[
 M(x)+e_2
 =\frac{2(z,\,x_2+1,\,x_1)}{|x+e_2|^2}
\]
expresses it as an inversion about $-e_2$, followed by a
reflection fixing $-e_2$ and a dilation about the same point.
The two orientation reversals cancel.
Directly,
\[
 M\circ M=\operatorname{id},\qquad
 DM(x)^{\mathsf T}DM(x)=\lambda(x)^2\operatorname{Id},
 \qquad
 \det DM(x)=\lambda(x)^3>0
\]
away from $-e_2$.

A magnetic potential transforms as a one-form.  Its pullback is
therefore $B_M(x)=DM(x)^{\mathsf T}A_*(M(x))$, rather than just
$A_*(M(x))$.
The calculation can be checked from
\[
 s(M(x))=\lambda(x)s(x),\qquad
 \eta(M(x))=\sqrt{1-\eta(x)^2},
\]
and
\[
 DM(x)^{\mathsf T}
 \begin{pmatrix}-M_2(x)\\M_1(x)\\0\end{pmatrix}
 =-\frac{\lambda(x)^2}{2}N(x).
\]
Indeed,
$A_*(y)=4h(\eta(y))s(y)^{-2}(-y_2,y_1,0)$, so
\[
 B_M(x)=-\frac2{s(x)^2}
            h\bigl(\sqrt{1-\eta(x)^2}\bigr)N(x),
 \qquad
 |B_M(x)|\le\frac{2\|h\|_\infty}{s(x)}.
\]
This formula defines a bounded axisymmetric poloidal field on
all of $\R^3$.  In particular, it extends across
$\{\rho=1,\ z=0\}$, where $N=0$, using the continuous extension
of $h$ to zero.

\emph{Step 4: the transformed spinor remains a global zero mode.}
Suppose that $\Psi\in L^2$ is a zero mode for $cA_*$.
Boundedness of $A_*$ gives $\Psi\in H^1$ by the argument of Step~1.
Conformal covariance of the magnetic Dirac operator
\cite[Section~4]{ErdosSolovej2001}, based on the classical spinorial
transformation law \cite[Section~1.4]{Hitchin1974}, \cite{Hijazi1986}
and recalled for zero modes in \cite[Remark~3]{FrankLossSharp2026},
gives
\[
 \Phi(x)=\lambda(x)\beta(x)\Psi(M(x)),
 \qquad
 \mathcal D_{cB_M}\Phi=0
 \quad\text{on }\R^3\setminus\{-e_2\}.
\]
Here $\beta(x)$ is the unitary spinor identification associated
with the orientation-preserving conformal map $M$.
The factor $\lambda$ is the spinor conformal weight in dimension
three.  On compact subsets away from $-e_2$ the transformation
preserves local $H^1$ regularity, and the covariance identity
extends by local $H^1$ approximation.  It requires no
differentiability of the bounded potentials.

Conformal covariance alone does not guarantee that $\Phi\in L^2$.
For this we use $M\circ M=\operatorname{id}$,
$\lambda(M(x))\lambda(x)=1$ and the Jacobian computed above.
Changing variables $y=M(x)$ gives
\[
 \begin{split}
 \int_{\R^3}|\Phi(x)|^2\dd x
 &=\int_{\R^3}\lambda(y)|\Psi(y)|^2\dd y\\
 &=2\int_{\R^3}\frac{|\Psi(y)|^2}{|y+e_2|^2}\dd y
 \le8\int_{\R^3}|\nabla\Psi(y)|^2\dd y<\infty.
 \end{split}
\]
The final step is the three-dimensional Hardy inequality,
applied componentwise.

It remains to remove the exceptional point from the weak equation.
Let $\chi_\eps$ vanish on $B_\eps(-e_2)$ and equal one outside
$B_{2\eps}(-e_2)$, with $|\nabla\chi_\eps|\le C/\eps$.
Then $\|\nabla\chi_\eps\|_2=O(\eps^{1/2})$.
For a compactly supported smooth test spinor $\zeta$, use
$\chi_\eps\zeta$ in the equation away from $-e_2$.
The additional term containing the cutoff derivative is bounded by
\[
 C\|\zeta\|_\infty
   \|\Phi\|_{L^2(B_{2\eps}(-e_2))}
   \|\nabla\chi_\eps\|_2,
\]
and tends to zero.  The other terms converge by local
integrability, since $\Phi\in L^2$ and $B_M$ is bounded.
Thus $\mathcal D_{cB_M}\Phi=0$ on all of $\R^3$ in distributions.
The argument of Step~1 now gives $\Phi\in H^1$, and
Proposition~\ref{prop:poloidal-no-zero} yields $\Phi=0$.
As $\lambda>0$, $\beta$ is unitary and $M$ is invertible away
from one point, it follows that $\Psi=0$.

Finally, the fields $T_{\lambda,z_0}A_*$ and $T_{\lambda,z_0}U_*$ are
obtained by axial translations and the dilations
$A\mapsto\lambda A(\lambda\,\cdot)$, under which
$\Psi\mapsto\lambda^{3/2}\Psi(\lambda\,\cdot)$ maps zero modes to
zero modes.  Hence the conclusion holds for every centre.
\end{proof}

\section{A gauge-canonical inverse}\label{sec:iteration}

We prepare the proof of Theorem~\ref{thm:main-curl} by fixing the
inverse map and establishing its descent and equality properties.

Away from solutions the symbol $\mathfrak D^{-1}$ is ambiguous, because a
curl primitive is determined only modulo $\Wspace$.  We remove the ambiguity
by imposing the nonlinear gauge after each inverse step.
Throughout this section $u\in\Dcurlhalf$.  Put
\[
 \Gamma(u):=u+w(u),
\]
where $w(u)$ is the unique correction from
\eqref{eq:gauge-minimizer}.  Thus $\Gamma(u)\in\Mman$ and
\begin{equation}\label{eq:Gamma-gauge-condition}
 \diver\bigl(\abs{\Gamma(u)}\Gamma(u)\bigr)=0.
\end{equation}
Uniqueness in the homogeneous $L^3$ minimization also gives
\begin{equation}\label{eq:Gamma-homogeneity}
 \Gamma(cu)=c\Gamma(u)\qquad(c\in\R).
\end{equation}
For a divergence-free $F\in L^{3/2}(\R^3,\R^3)$, let
\begin{equation}\label{eq:Biot-Savart-K}
 \mathcal K F:=\curl(-\Delta)^{-1}F.
\end{equation}
The Hardy--Littlewood--Sobolev inequality gives $\mathcal K F\in L^3$, and
$\curl\mathcal K F=F$.  Define the gauge-canonical inverse map by
\begin{equation}\label{eq:canonical-inverse-I}
 I(u):=\Gamma\!\left(\mathcal K
       \bigl(\abs{\Gamma(u)}\Gamma(u)\bigr)\right).
\end{equation}
The source $|\Gamma(u)|\Gamma(u)$ belongs to $L^{3/2}$ and is
divergence-free by \eqref{eq:Gamma-gauge-condition}, so
$I(u)\in\Dcurlhalf$.  It depends only on the class of $u$ modulo $\Wspace$.

\begin{lemma}[The canonical inverse and descent of the quotient]
\label{lem:canonical-inverse-descent}
For $u\in\Dcurlhalf\setminus\Wspace$, the field $I(u)$ defined in
\eqref{eq:canonical-inverse-I} is the unique element of $\Mman$
satisfying
\begin{equation}\label{eq:canonical-inverse-curl}
 \curl I(u)=\abs{\Gamma(u)}\Gamma(u).
\end{equation}
Consequently,
\begin{equation}\label{eq:D-I-identity}
 \mathfrak D(I(u))=\Gamma(u),
 \qquad
 I(cu)=c\abs{c}\,I(u)\quad(c\in\R).
\end{equation}
Moreover,
\begin{equation}\label{eq:inverse-Q-descent}
 Q(I(u))\le Q(u),
\end{equation}
and the decrease in one step is given by an exact formula.  Define
\begin{equation}\label{eq:inverse-alignment-definitions}
 E(u):=\int_{\R^3}\abs{\curl u}^{3/2}\dd x,
 \qquad
 M(u):=\int_{\R^3}\abs{\Gamma(u)}^{3}\dd x,
 \qquad v:=I(u),
\end{equation}
and
\[
 \mathfrak a(u):=
 \frac{M(u)}{E(u)^{2/3}M(v)^{1/3}}\in(0,1].
\]
Then
\begin{equation}\label{eq:inverse-exact-descent-ratio}
 \frac{Q(I(u))}{Q(u)}=\mathfrak a(u)^{3/2}.
\end{equation}
Equality in \eqref{eq:inverse-Q-descent} holds if and only if, for some
$\lambda>0$,
\begin{equation}\label{eq:inverse-equality-cycle}
 \curl u=\lambda\abs{I(u)}I(u),
 \qquad
 I^2(u)=\lambda^{-1}\Gamma(u).
\end{equation}
Equality thus holds exactly on the projective two-cycles of $I$:
two inverse steps return the gauge-minimal field up to a positive factor.
Finally, $I$ interchanges the two cylindrical classes:
\begin{equation}\label{eq:I-type-interchange}
 I(\Tclass)\subset\Sclass,
 \qquad
 I(\Sclass)\subset\Tclass.
\end{equation}
\end{lemma}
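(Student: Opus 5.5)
The plan is to derive every assertion from two facts: uniqueness of the gauge-minimal representative in each class modulo $\Wspace$, and a single Hölder inequality paired against an integration by parts.

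\emph{Characterization and the identities \eqref{eq:D-I-identity}.} By construction $I(u)=\Gamma(\mathcal K(|\Gamma(u)|\Gamma(u)))$ lies in $\Mman$. Its curl equals $\curl\mathcal K(|\Gamma(u)|\Gamma(u))=|\Gamma(u)|\Gamma(u)$, since adding $w\in\Wspace$ does not change the curl. Now let $v_1,v_2\in\Mman$ have the same curl. Then $v_1-v_2\in\Wspace$, so $v_1$ and $v_2$ are gauge-minimal representatives of the same class, and uniqueness of the minimizer in \eqref{eq:gauge-minimizer} gives $v_1=v_2$. This proves the characterization by \eqref{eq:canonical-inverse-curl}.

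The first identity in \eqref{eq:D-I-identity} is then pointwise. We have $|\curl I(u)|=|\Gamma(u)|^2$, hence
\[
\mathfrak D(I(u))=|\Gamma(u)|^{-1}|\Gamma(u)|\Gamma(u)=\Gamma(u),
\]
with the convention $\mathfrak D=0$ where the curl vanishes. For the homogeneity, \eqref{eq:Gamma-homogeneity} shows that the source for $cu$ is $c|c|\,|\Gamma(u)|\Gamma(u)$. Since $\mathcal K$ is linear and $\Gamma$ is homogeneous, $I(cu)=c|c|\,I(u)$.

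\emph{Exact descent.} Put $v=I(u)$. Because $v\in\Mman$ and $\Gamma(u)\in\Mman$, the denominators in $Q$ involve no gauge correction. Using \eqref{eq:canonical-inverse-curl}, $E(v)=\int|\Gamma(u)|^3=M(u)$. Therefore
\[
Q(v)=\frac{M(u)}{M(v)^{1/2}},\qquad Q(u)=\frac{E(u)}{M(u)^{1/2}},
\]
and dividing gives \eqref{eq:inverse-exact-descent-ratio} directly.

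It remains to show $\mathfrak a(u)\le1$. I would write
\[
M(u)=\int\Gamma(u)\cdot\curl v\,\dd x=\int v\cdot\curl\Gamma(u)\,\dd x=\int v\cdot\curl u\,\dd x .
\]
The middle equality is the integration-by-parts identity $\int a\cdot\curl b=\int b\cdot\curl a$ for $a,b\in\Dcurlhalf$. I would justify it by the density of $\mathcal C_c^\infty$ in the curl graph norm, exactly as in the proof of Theorem~\ref{thm:general-p-duality}, since the pairing is continuous in $L^3\times L^{3/2}$. Hölder's inequality then gives
\[
M(u)\le\|v\|_3\|\curl u\|_{3/2}=M(v)^{1/3}E(u)^{2/3}.
\]
Note also that $M(u)>0$ because $u\notin\Wspace$, so $\mathfrak a(u)\in(0,1]$.

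\emph{Equality case.} Equality in this Hölder step, together with $M(u)>0$, holds if and only if $\curl u=\lambda|v|v$ for some $\lambda>0$. In that case $\curl I(v)=|\Gamma(v)|\Gamma(v)=|v|v=\curl(\lambda^{-1}\Gamma(u))$. Since $\lambda^{-1}\Gamma(u)\in\Mman$, the uniqueness just proved gives $I^2(u)=\lambda^{-1}\Gamma(u)$. Conversely, the first relation in \eqref{eq:inverse-equality-cycle} is exactly the Hölder equality condition. The second relation implies the first by taking curls and using $\curl I^2(u)=|v|v$.

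\emph{Class interchange.} Let $A\in\Tclass$. By Lemma~\ref{lem:T-gauge}, $\Gamma(A)=A$, so the source $|A|A$ is toroidal. The Biot--Savart operator $\mathcal K$ commutes with rotations about the axis and, with a sign, with reflections in planes containing the axis; hence it maps toroidal divergence-free sources to poloidal fields, as the cylindrical curl formulas also indicate. Finally $\Gamma$ preserves $\Sclass$: the same symmetry group maps the gauge class of a poloidal field to itself, so by uniqueness the minimizer is invariant and therefore poloidal. The case $U\in\Sclass$ is symmetric: $\Gamma(U)\in\Sclass$, the source $|\Gamma(U)|\Gamma(U)$ is poloidal, $\mathcal K$ of it is toroidal, and the final $\Gamma$ is trivial by Lemma~\ref{lem:T-gauge}.

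I expect the main obstacle to be this symmetry bookkeeping. One must check that reflection-equivariance of $\Gamma$ and $\mathcal K$ correctly separates the $e_\theta$ component from the $(e_\rho,e_3)$ components; this requires treating fields as one-forms under reflection, since the curl picks up a sign. A secondary technical point is the density argument for the integration by parts in the critical pairing.
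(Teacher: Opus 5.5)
Your proposal is correct and follows the paper's proof essentially step for step. The steps are: uniqueness in $\Mman$ from the strictly convex gauge minimization; the identity $E(I(u))=M(u)$, combined with one integration by parts and Hölder's inequality, for the exact ratio and its equality case; and rotation/reflection equivariance of $\Gamma$ and $\mathcal K$ for the class interchange. The only cosmetic difference is that you use Lemma~\ref{lem:T-gauge} for $\Gamma$ on toroidal fields, whereas the paper runs a single parity argument with $R=\operatorname{diag}(1,-1,1)$ for both classes.
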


\begin{proof}
Equation \eqref{eq:Gamma-gauge-condition} makes
$\abs{\Gamma(u)}\Gamma(u)$ divergence-free, so
\eqref{eq:canonical-inverse-curl} follows from
\eqref{eq:Biot-Savart-K}; applying $\Gamma$ does not change curl.  If two
elements of $\Mman$ have this curl, their difference lies in $\Wspace$, and
both satisfy the Euler equation for the strictly convex $L^3$-minimization in
that affine class.  Hence they coincide.  The two identities in
\eqref{eq:D-I-identity} follow from the definition, uniqueness, and
\eqref{eq:Gamma-homogeneity}.

With the notation in \eqref{eq:inverse-alignment-definitions}, one has
$\Gamma(v)=v$ and
\[
 E(v)=M(u).
\]
Integration by parts and Hölder's inequality give
\begin{align}
 M(u)
 &=\int_{\R^3}\Gamma(u)\cdot\curl v\dd x
  =\int_{\R^3}\curl u\cdot v\dd x \notag\\
 &\le E(u)^{2/3}M(v)^{1/3}.
 \label{eq:inverse-Holder}
\end{align}
The integration-by-parts identity is first proved for smooth compactly
supported fields and then extended by density in the natural
$L^3$--$L^{3/2}$ pairing.
After raising \eqref{eq:inverse-Holder} to the power $3/2$ and rearranging,
we obtain
\[
 Q(v)=\frac{M(u)}{M(v)^{1/2}}
 \le\frac{E(u)}{M(u)^{1/2}}=Q(u).
\]
Division of the two endpoint expressions gives
\[
 \frac{Q(v)}{Q(u)}
 =\frac{M(u)^{3/2}}{E(u)M(v)^{1/2}}
 =\mathfrak a(u)^{3/2},
\]
which proves \eqref{eq:inverse-exact-descent-ratio}.  To identify the
equality case, set
\[
 x:=\frac{v}{M(v)^{1/3}},
 \qquad
 y:=\frac{\mathfrak D(u)}{E(u)^{1/3}}.
\]
Then $\norm{x}{L^3}=\norm{y}{L^3}=1$ and
\[
 \mathfrak a(u)=\int_{\R^3}x\cdot\abs{y}y\dd x.
\]
Equality in Hölder's inequality is therefore equivalent to $x=y$, or,
equivalently, to the first relation in
\eqref{eq:inverse-equality-cycle}.  If that relation holds, then
$\lambda^{-1}\Gamma(u)$ is the unique nonlinear-gauge primitive of
$\abs v v$, and hence equals $I(v)$.  Conversely, the second relation in
\eqref{eq:inverse-equality-cycle} implies the first one after taking curl.
This proves the equality characterization.

Finally, $\mathcal K$ interchanges toroidal and poloidal fields.
To see that the gauge projection preserves each class, let
$R=\operatorname{diag}(1,-1,1)$ act by
$(R\cdot u)(x)=Ru(Rx)$.
Within the $SO(2)$-equivariant space, poloidal fields are fixed by
this reflection and toroidal fields are anti-invariant.
Uniqueness of the gauge minimizer shows that $\Gamma$ commutes with
this action and with rotations; it also commutes with negation by
\eqref{eq:Gamma-homogeneity}.
Thus $\Gamma$ preserves both classes.
The operator $\mathcal K$ reverses reflection parity, by the
transformation rule for curl under orientation-reversing orthogonal
maps.  This proves \eqref{eq:I-type-interchange}.
\end{proof}

\section{Proof of Theorem~\texorpdfstring{\ref{thm:main-curl}}{1.1}}
\label{sec:proof-main}

\subsection{Sharp constants, classification and the first inverse step}
\label{subsec:main-a}

\begin{proof}[Proof of Theorem~\ref{thm:main-curl}\textup{(a)} and \textup{(b)}]
\emph{Reduction and duality.}
The positivity of the global constant and its comparison with the
restricted constants follow from \eqref{eq:constant-chain}.
By Lemma~\ref{lem:T-gauge}, every admissible toroidal field
$A(\psi)$ is already in its $L^3$-minimal gauge.
Proposition~\ref{prop:T-H-correspondence} therefore gives
\[
 Q(A(\psi))=\sqrt{2\pi}\,
 \frac{\displaystyle\int_{\Hh^2}|\nabla_g\psi|_g^{3/2}\dd V_g}
 {\displaystyle\left(\int_{\Hh^2}|\psi|^3\dd V_g\right)^{1/2}}
 \qquad(\psi\ne0).
\]
Taking the infimum over this bijection, and using
\eqref{eq:cylindrical-p-constant-duality} at $p=\widehat p=3/2$,
we obtain
\begin{equation}\label{eq:restricted-constant-equality}
 S_{3/2,\curlname}^{\Tclass}
 =S_{3/2,\curlname}^{\Sclass}
 =\sqrt{2\pi}\,\Lambda_{\Hh^2}.
\end{equation}
The hyperbolic infimum is attained, as recalled in
Section~\ref{sec:hyperbolic}.  Choose a positive minimizer $\psi_*$
normalized by $-\Delta_{3/2,g}\psi_*=\psi_*^2$.
The correspondence above makes $A_*=\rho^{-1}\psi_*e_\theta$
a toroidal extremal, and Proposition~\ref{prop:T-PDE-H} shows that
it solves the full equation \eqref{eq:intro-PDE}, including across
the symmetry axis.  Theorems~\ref{thm:general-p-duality}
and~\ref{thm:general-p-constant-duality} make
$U_*=\mathfrak D(A_*)$ a poloidal extremal solving the same equation,
and give $\mathfrak D(U_*)=A_*$.  These conclusions hold for every
positive hyperbolic minimizer, independently of its centre.

\emph{Classification.}
Let $A=A(\psi)\in\Tclass$ be an extremal of
$S_{3/2,\curlname}^{\Tclass}$.  By the correspondence, $\psi$ minimizes
$\Lambda_{\Hh^2}$, hence has one sign (Section~\ref{sec:hyperbolic}),
and after multiplication by a nonzero constant it is a positive
solution of \eqref{eq:hyperbolic-PDE}.  By
Theorem~\ref{thm:hyperbolic-profile-uniqueness} this solution is
$\psi_*\circ F_{\rho_0,z_0}^{-1}$ for some $\rho_0>0$, $z_0\in\R$, so
that $A=cT_{1/\rho_0,z_0}A_*$ with $c\ne0$; conversely every such field
is an extremal, since $Q$ is invariant under scaling and under the
maps \eqref{eq:euclidean-moves}.  Now let $U\in\Sclass$ be an extremal
of $S_{3/2,\curlname}^{\Sclass}$.  Choose $a>0$ so that
$U':=a\Gamma(U)$ solves \eqref{eq:intro-PDE}; this is possible by
\cite[Theorem~1.4]{MederskiSzulkin2025}.  By
Theorems~\ref{thm:general-p-duality}
and~\ref{thm:general-p-constant-duality}, $\mathfrak D(U')$ is a
toroidal extremal solving \eqref{eq:intro-PDE}, hence of the form
$cT_{\lambda,z_0}A_*$ by the first part; since the maps
\eqref{eq:euclidean-moves} preserve solutions and $cA_*$ solves
\eqref{eq:intro-PDE} only for $|c|=1$, we get
$\mathfrak D(U')=\pm T_{\lambda,z_0}A_*$ and therefore
$U'=\mathfrak D(\mathfrak D(U'))=\pm T_{\lambda,z_0}U_*$, because
$\mathfrak D$ is odd and commutes with the maps
\eqref{eq:euclidean-moves}.  Thus $U=cT_{\lambda,z_0}U_*+w$ with
$c\ne0$ and $w$ curl-free, which proves part~\textup{(b)}.
\end{proof}

\emph{Radial quadrature for the inverse map.}
To evaluate the first inverse step explicitly and to prepare the
convergence argument, we derive the radial formulas.  Let $\psi\ge0$ be a nonzero profile in
$\mathcal D_{\mathrm{rad},O}^{1,3/2}(\Hh^2)$.
Since $\psi^2\in L^{3/2}$ and
$\mathcal D^{1,3/2}(\Hh^2)\hookrightarrow L^3$, strict monotonicity
and coercivity give a unique weak solution
$\chi\in\mathcal D^{1,3/2}(\Hh^2)$ of
\[
 -\Delta_{3/2,g}\chi=\psi^2.
\]
Testing with the negative part gives $\chi\ge0$, and uniqueness
makes $\chi$ radial about the same centre.
Its radial flux $F(t)=\sinh t\,|\chi'|^{-1/2}\chi'$ satisfies
$F'=-\psi^2\sinh t$.
Since the right-hand side is integrable near zero, $F$ has a finite
limit there.  This integration constant is zero: a nonzero flux limit
would make
$\int_0^1|F(t)|^3/\sinh^2t\dd t$ infinite, contrary to finite energy.
Consequently,
\[
 \sinh t\sqrt{-\chi'(t)}
 =\mathcal J_\psi(t),\qquad
 \mathcal J_\psi(t):=\int_0^t\psi(s)^2\sinh s\dd s.
\]
Thus $\chi$ is nonincreasing.  Since $\chi\in L^3(\Hh^2)$, its limit
at infinity is zero, and a second integration gives
\begin{equation}\label{eq:inverse-radial-quadrature}
 \chi(t)=(\mathcal R\psi)(t)
 :=\int_t^\infty\frac{\mathcal J_\psi(s)^2}{\sinh^2s}\dd s,
 \qquad t>0.
\end{equation}
The identities
$\mathfrak D(I^2(A(\psi)))=I(A(\psi))$ and
$\curl I(A(\psi))=|A(\psi)|A(\psi)$, together with
Proposition~\ref{prop:T-PDE-H}, show that the toroidal profile of
$I^2(A(\psi))$ solves the same weak equation as $\chi$.
By uniqueness,
\[
 I^2(A(\psi))=A(\mathcal R\psi).
\]

Write, as radial integrals without the angular factor,
\[
 e(\psi):=\int_0^\infty|\psi'|^{3/2}\sinh t\dd t,\qquad
 m(\psi):=\int_0^\infty\psi^3\sinh t\dd t.
\]
Differentiating \eqref{eq:inverse-radial-quadrature} gives
\[
 e(\mathcal R\psi)
 =\int_0^\infty\frac{\mathcal J_\psi(t)^3}{\sinh^2t}\dd t.
\]
The hyperbolic polar-coordinate formula and the norm identities
\eqref{eq:T-L3-H}--\eqref{eq:T-curl-H} give the first formula below.
The second follows from
$E(I(A(\psi)))=M(A(\psi))$ and
$M(I(A(\psi)))=E(A(\mathcal R\psi))$, with $E$ and $M$ as in
\eqref{eq:inverse-alignment-definitions}:
\begin{equation}\label{eq:inverse-radial-quotients}
 Q(A(\psi))=2\pi\frac{e(\psi)}{m(\psi)^{1/2}},
 \qquad
 Q(I(A(\psi)))=2\pi\frac{m(\psi)}{e(\mathcal R\psi)^{1/2}}.
\end{equation}

\begin{proof}[Proof of Corollary~\ref{cor:explicit-bound}]
The trial field of \cite[Section~6]{BieganowskiMederski2026} is
\[
 A_{\Tclass}(x)
 =\frac{3}{s^2}(-x_2,x_1,0)
 =A(\psi_0),\qquad
 \psi_0(t):=\frac34\sech^2t.
\]
It belongs to $\Tclass\cap\Mman$, and
\[
 Q(A_{\Tclass})=\frac{8\sqrt{10}}7\pi.
\]
For this profile,
\[
 m(\psi_0)=\frac{27}{320},\qquad
 \mathcal J_{\psi_0}(t)=\frac3{16}(1-\sech^3t).
\]
Since $I(A_{\Tclass})=\mathfrak D(A(\mathcal R\psi_0))$,
the calculation leading to \eqref{eq:U-star-intermediate} gives
\[
 I(A_{\Tclass})
 =\frac{\mathcal J_{\psi_0}(t)}{2\rho^2\sinh^2t}N
 =\frac{3}{8s^2}\frac{1-\eta^3}{1-\eta^2}N.
\]
After cancellation, this becomes
\[
 I(A_{\Tclass})(x)
 =\frac{3}{8s^2}\frac{1+\eta+\eta^2}{1+\eta}N(x),
\]
with the quotient extended continuously at $\eta=1$.
The substitution $\eta=\sech t$ gives
\begin{align*}
 e(\mathcal R\psi_0)
 &=\frac{27}{4096}\int_0^1
       \frac{\eta(1-\eta^3)^3}{(1-\eta^2)^{3/2}}\dd\eta\\
 &=\frac{27}{4096}
       \left(\frac{891\pi}{256}-\frac{53}{5}\right).
\end{align*}
For the last equality, integrate by parts and use
\[
 \int_0^1\frac{\eta^2}{\sqrt{1-\eta^2}}\dd\eta=\frac\pi4,\quad
 \int_0^1\frac{\eta^5}{\sqrt{1-\eta^2}}\dd\eta=\frac8{15},\quad
 \int_0^1\frac{\eta^8}{\sqrt{1-\eta^2}}\dd\eta=\frac{35\pi}{256}.
\]
Substitution in \eqref{eq:inverse-radial-quotients} yields
\[
 Q(I(A_{\Tclass}))
 =\frac{96\sqrt{15}\,\pi}{5\sqrt{4455\pi-13568}}.
\]
Its strict comparison with $Q(A_{\Tclass})$ is equivalent to
$\pi>349784/111375$, which follows already from $\pi>223/71$.

It remains to prove that this first iterate is not an extremal.
Put $u_j=I^j(A_{\Tclass})$ and $q_j=Q(u_j)$.
If $q_1$ equalled the common restricted sharp constant,
descent and \eqref{eq:restricted-constant-equality} would give
$q_2=q_1$.  The equality case in
Lemma~\ref{lem:canonical-inverse-descent} would then give
$u_3=c u_1$ for some $c>0$.
Applying $\mathfrak D$ and using
$\mathfrak D(u_{j+1})=u_j$ yields
$u_2=\sqrt c\,u_0$, hence $q_2=q_0$, contradicting $q_1<q_0$.
We have proved the full strict chain
\[
 S_{3/2,\curlname}^{\Tclass}
 <Q(I(A_{\Tclass}))
 =\frac{96\sqrt{15}\,\pi}{5\sqrt{4455\pi-13568}}
 <\frac{8\sqrt{10}}7\pi
 =Q(A_{\Tclass}),
\]
which proves the corollary.
\end{proof}

\subsection{Convergence from every one-signed radial profile}
\label{subsec:main-b}

The proof uses descent, compactness at a fixed centre, and uniqueness
of the positive hyperbolic profile.  The scalar compactness mechanism
is an instance of the abstract inverse-iteration theory of
\cite{Ercole2018}, with $p=3/2$, $q=3$,
$X=\mathcal D^{1,3/2}_{\mathrm{rad},O}(\Hh^2)$ and $Y=L^3(\Hh^2)$ in the
notation of that paper; on the scalar level the scheme is an inverse
power method of the kind introduced for the $p$-Laplacian in
\cite{BiezunerErcoleMartins2009,HyndLindgren2016,HyndLindgren2017}.
We give the full argument here to track the two curl phases and their
different normalizations.

\begin{proof}[Proof of Theorem~\ref{thm:main-curl}\textup{(c)}]
Let $0\ne\psi\in\mathcal D_{\mathrm{rad},O}^{1,3/2}(\Hh^2)$ have one sign,
where $O$ is any fixed hyperbolic centre, and define
\[
 u_0=A(\psi),\qquad u_{n+1}=I(u_n).
\]
The scalar equations, the fixed-centre compactness lemma and the
uniqueness of the positive profile are invariant under hyperbolic
isometries, so there is no restriction on $O$.
Since $I(-u)=-I(u)$, it suffices to assume $\psi\ge0$.
The initial field lies in $\Mman$ by Lemma~\ref{lem:T-gauge}.
Lemma~\ref{lem:canonical-inverse-descent} shows inductively that every
iterate lies in $\Mman$ and is nonzero, because
$\curl u_{n+1}=|u_n|u_n$.
In particular, every quotient below is defined.  Put
\[
 M_n:=\int_{\R^3}|u_n|^3\dd x,\qquad q_n:=Q(u_n).
\]
Lemma~\ref{lem:canonical-inverse-descent} and the equality
\eqref{eq:restricted-constant-equality} proved in part~\textup{(a)} give
\begin{equation}\label{eq:q-lower-bound-limit}
 q_n\to L,\qquad
 L\ge S_{3/2,\curlname}^{\Tclass}
   =S_{3/2,\curlname}^{\Sclass}>0.
\end{equation}
Set $v_n=M_n^{-1/3}u_n$, so that $\|v_n\|_3=1$ and $Q(v_n)=q_n$.
Since $\int|\curl u_{n+1}|^{3/2}=M_n$, their recurrence is
\begin{equation}\label{eq:normalized-vector-recurrence}
 \curl v_{n+1}=q_{n+1}^{2/3}|v_n|v_n.
\end{equation}

The even iterates are toroidal.  Write
\[
 v_{2k}=A(\phi_k)=\frac{\phi_k}{\rho}e_\theta.
\]
The rescaled initial profile $\phi_0=M_0^{-1/3}\psi$ is nonnegative,
nonzero, and radial about $O$.  Suppose inductively that $\phi_k$ has
these properties.  From
\eqref{eq:normalized-vector-recurrence},
\[
 \mathfrak D(v_{2k+2})=q_{2k+2}^{1/3}v_{2k+1}.
\]
Taking curl, using \eqref{eq:normalized-vector-recurrence} once more, and
applying Proposition~\ref{prop:T-PDE-H}, we obtain
\begin{equation}\label{eq:normalized-profile-recurrence-raw}
 -\Delta_{3/2,g}\phi_{k+1}
 =q_{2k+2}^{1/3}q_{2k+1}^{2/3}\phi_k^2
 \qquad\text{in }\Hh^2.
\end{equation}
The $3/2$-Laplace operator on $\mathcal D^{1,3/2}(\Hh^2)$ is strictly
monotone and coercive, so \eqref{eq:normalized-profile-recurrence-raw}
determines $\phi_{k+1}$ uniquely.  Testing with the negative part gives
$\phi_{k+1}\ge0$; the right-hand side is nonnegative, nontrivial and
invariant under the stabilizer of $O$, so by uniqueness $\phi_{k+1}$ is
radial about $O$.  Formula \eqref{eq:inverse-radial-quadrature},
applied to the nonzero nonnegative source in
\eqref{eq:normalized-profile-recurrence-raw}, gives
$\phi_{k+1}(t)>0$ for every $t>0$: the corresponding flux is positive
for all sufficiently large arguments.
This completes the induction.

Put
\[
 f_k:=(2\pi)^{1/3}\phi_k.
\]
By \eqref{eq:T-L3-H}--\eqref{eq:T-curl-H},
\[
 \int_{\Hh^2}f_k^3\dd V_g=1,
 \qquad
 \int_{\Hh^2}\abs{\nabla_gf_k}_g^{3/2}\dd V_g
 =\frac{q_{2k}}{\sqrt{2\pi}}.
\]
Moreover, \eqref{eq:normalized-profile-recurrence-raw} becomes
\begin{equation}\label{eq:f-profile-recurrence}
 -\Delta_{3/2,g}f_{k+1}=\lambda_k f_k^2,
 \qquad
 \lambda_k:=
 \frac{q_{2k+2}^{1/3}q_{2k+1}^{2/3}}{\sqrt{2\pi}}.
\end{equation}
Here and below the equation is understood weakly in
$X:=\mathcal D^{1,3/2}(\Hh^2)$:
\[
 \int_{\Hh^2}\abs{\nabla_gf_{k+1}}_g^{-1/2}
  \ip{\nabla_gf_{k+1}}{\nabla_gh}_g\dd V_g
 =\lambda_k\int_{\Hh^2}f_k^2h\dd V_g
 \qquad(h\in X).
\]
The right-hand side belongs to $X^*$ because $X\hookrightarrow L^3$ and
$f_k^2\in L^{3/2}$.  In view of \eqref{eq:q-lower-bound-limit},
\begin{equation}\label{eq:lambda-ell-limit}
 \lambda_k\to\ell:=\frac{L}{\sqrt{2\pi}},
 \qquad
 \int_{\Hh^2}\abs{\nabla_gf_k}_g^{3/2}\dd V_g
 \to\ell.
\end{equation}

The sequence $(f_k)$ is bounded in
$\mathcal D_{\mathrm{rad},O}^{1,3/2}(\Hh^2)$.  By
Lemma~\ref{lem:fixed-centre-radial-compactness}, after passing to a
subsequence, not relabelled, we may assume that, for some $f,g$,
\begin{align}
 f_k&\rightharpoonup f,
 &f_{k+1}&\rightharpoonup g
 &&\text{in }\mathcal D^{1,3/2}(\Hh^2),
 \notag\\
 f_k&\to f,
 &f_{k+1}&\to g
 &&\text{in }L^3(\Hh^2).
 \label{eq:profile-subsequence-convergence}
\end{align}
In particular, $f,g\ge0$ and $\norm{f}{L^3}=\norm{g}{L^3}=1$.
Testing \eqref{eq:f-profile-recurrence} by $f_{k+1}$ gives
\[
 \int_{\Hh^2}\abs{\nabla_gf_{k+1}}_g^{3/2}\dd V_g
 =\lambda_k\int_{\Hh^2}f_k^2f_{k+1}\dd V_g.
\]
Using \eqref{eq:lambda-ell-limit} and
\eqref{eq:profile-subsequence-convergence}, we obtain
\[
 \int_{\Hh^2}f^2g\dd V_g=1.
\]
Equality in Hölder's inequality, together with the two unit $L^3$ norms,
implies $f=g$.

Let $\mathcal A=-\Delta_{3/2,g}:X\to X^*$.
By Browder--Minty there is a unique $h\in X$ such that
$\mathcal Ah=\ell f^2$.
On bounded subsets of $X$, the monotonicity inequality for $p=3/2$
gives
\begin{align*}
 \|f_{k+1}-h\|_X^2
 &\le C\langle\mathcal Af_{k+1}-\mathcal Ah,f_{k+1}-h\rangle\\
 &\le C\|\lambda_k f_k^2-\ell f^2\|_{X^*}
          \|f_{k+1}-h\|_X,
\end{align*}
where $\|h\|_X=\|\nabla_gh\|_{3/2}$.
The last dual norm tends to zero because $f_k\to f$ in $L^3$.
Thus $f_{k+1}\to h$ strongly in $X$.
Its $L^3$ limit is $g=f$, so $h=f$ and
\[
 -\Delta_{3/2,g}f=\ell f^2,\qquad \|f\|_{L^3}=1.
\]
Testing with $f$ yields $\|\nabla_gf\|_{3/2}^{3/2}=\ell$.
The energies of $f_k$ tend to the same value by
\eqref{eq:lambda-ell-limit}; weak convergence and uniform convexity
therefore imply $f_k\to f$ strongly in $X$ along this subsequence.
The radial representation \eqref{eq:inverse-radial-quadrature}, with
source $\ell f^2$, gives $f(t)>0$ for $t>0$: since $f\not\equiv0$,
the source flux is positive for all sufficiently large radii, so every
tail integral in that representation is positive.  After the rescaling
$\widetilde f=\ell^{2/3}f$, Theorem~\ref{thm:hyperbolic-profile-uniqueness}
identifies $\widetilde f$ with the unique positive finite-energy profile
centred at $O$.  Let $f_*$ be the positive hyperbolic minimizer centred at $O$ with
$\norm{f_*}{L^3}=1$.  Its Euler equation is
\[
 -\Delta_{3/2,g}f_*=\Lambda_{\Hh^2}f_*^2.
\]
Thus $\Lambda_{\Hh^2}^{2/3}f_*$ is another centred positive solution of the
coefficient-one equation.  Uniqueness gives
\[
 \ell^{2/3}f=\Lambda_{\Hh^2}^{2/3}f_*.
\]
Taking $L^3$ norms therefore yields
\begin{equation}\label{eq:ell-equals-Lambda}
 \ell=\Lambda_{\Hh^2}.
\end{equation}
The same argument applies to every subsequence, and the limit is always the
same profile of $L^3$ norm one; hence the whole sequence of rescaled even
profiles converges strongly in $\mathcal D^{1,3/2}(\Hh^2)$ and in
$L^3(\Hh^2)$.  Combining \eqref{eq:lambda-ell-limit},
\eqref{eq:ell-equals-Lambda} and \eqref{eq:T-constant-H} we conclude that
\begin{equation}\label{eq:inverse-iteration-limit}
 Q(u_n)\to L=\sqrt{2\pi}\Lambda_{\Hh^2}
  =S_{3/2,\curlname}^{\Tclass}.
\end{equation}
Although only the even iterates entered the compactness argument, $L$ was
defined in \eqref{eq:q-lower-bound-limit} as the limit of the whole monotone
sequence $(q_n)$, so \eqref{eq:inverse-iteration-limit} holds for the full
sequence.
If $u_{2k}=A(\psi_k)$, the profiles used above satisfy
$f_k=\psi_k/\|\psi_k\|_{L^3(\Hh^2)}$, and
\eqref{eq:T-L3-H}--\eqref{eq:T-curl-H} turn the strong convergence
$f_k\to f_*$ in $X$ and in $L^3(\Hh^2)$ into
\[
 v_{2k}=(2\pi)^{-1/3}A(f_k)\longrightarrow(2\pi)^{-1/3}A(f_*)
 =\frac{A_*}{\|A_*\|_{L^3}}
\]
in $L^3(\R^3)$, with curls converging in $L^{3/2}(\R^3)$; here
$O=(1,0)$, and for a general centre $A_*$ is replaced by its image
under \eqref{eq:euclidean-moves}.

\emph{The odd iterates.}
The map $I$ is continuous for the $L^3$ topology: $u\mapsto|u|u$ is
continuous from $L^3$ to $L^{3/2}$, $\mathcal K$ is bounded from
$L^{3/2}$ to $L^3$, and $\Gamma=\mathrm{Id}-P_{\Wspace}$, where
$P_{\Wspace}$ is the metric projection onto the closed subspace
$\Wspace$ of the uniformly convex space $L^3$, is continuous.  With
$M_*:=\int_{\R^3}|A_*|^3\dd x$ and $v_\infty:=M_*^{-1/3}A_*$, the
definition of $I$ and \eqref{eq:Gamma-homogeneity} give
\[
 v_{2k+1}=\frac{I(u_{2k})}{M_{2k+1}^{1/3}}
 =\frac{M_{2k}^{2/3}}{M_{2k+1}^{1/3}}\,
   \Gamma\mathcal K\bigl(|v_{2k}|v_{2k}\bigr)
 =q_{2k+1}^{2/3}\,\Gamma\mathcal K\bigl(|v_{2k}|v_{2k}\bigr)
 \longrightarrow L^{2/3}\,\Gamma\mathcal K\bigl(|v_\infty|v_\infty\bigr)
\]
in $L^3(\R^3)$.  Since $A_*$ solves \eqref{eq:intro-PDE}, the
gauge-minimal field with curl $|A_*|A_*$ is $U_*=\mathfrak D(A_*)$, so
$\Gamma\mathcal K(|v_\infty|v_\infty)=M_*^{-2/3}U_*$; moreover
$L=Q(A_*)=M_*^{1/2}$ and $\int_{\R^3}|U_*|^3\dd x=M_*$ by the Nehari
identity.  Hence $v_{2k+1}\to M_*^{-1/3}U_*=U_*/\|U_*\|_{L^3}$, and
$\curl v_{2k+1}=q_{2k+1}^{2/3}|v_{2k}|v_{2k}\to L^{2/3}|v_\infty|v_\infty
=\curl(M_*^{-1/3}U_*)$ in $L^{3/2}(\R^3)$.  For nonpositive initial
data the oddness of $I$ reverses the signs of both limits.
\end{proof}

\subsection{Breakdown of the forward iteration}
\label{subsec:main-c}

\begin{proposition}[Breakdown of the forward iteration]
\label{prop:forward-breakdown}
Let $u_j:=\mathfrak D^j(A_{\Tclass})$ and $q_j:=Q(u_j)$, as long as
the forward image belongs to the natural domain.  Then
\begin{equation}\label{eq:forward-domain-breakdown}
 u_j\in\Dcurlhalf\quad(j=0,1,2),
 \qquad
 u_3\in L^3(\R^3,\R^3)\setminus\Dcurlhalf,
\end{equation}
and $q_0<q_1<q_2$, with the values
\eqref{eq:forward-q0}--\eqref{eq:forward-q2} below.
\end{proposition}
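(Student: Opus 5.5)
The plan is to compute the forward orbit explicitly in hyperbolic polar coordinates about $O=(1,0)$, alternating between toroidal and poloidal fields. Throughout, every field stays of the \emph{radial} form, either $A(\psi(t))$ or a scalar multiple of $N/(2\rho^2\sinh t)$. By \eqref{eq:curl-A-psi} and the identity $-t_ze_\rho+t_\rho e_3=-N/(2\rho^2\sinh t)$, every computation reduces to one-dimensional calculus in $t$.

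\emph{Step 1: the two transfer rules.}
\emph{Toroidal to poloidal.} For a toroidal field $A(\psi)$ with $\psi=\psi(t)$, formulas \eqref{eq:curl-A-star-N}--\eqref{eq:U-star-intermediate} give
\[
 \mathfrak D(A(\psi))=\frac{\operatorname{sgn}(-\psi')\sqrt{|\psi'(t)|}}{2\rho^2\sinh t}N,
 \qquad
 |\mathfrak D(A(\psi))|=\frac{\sqrt{|\psi'|}}{\rho}.
\]
These hold verbatim without a sign assumption on $\psi'$.

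\emph{Poloidal to toroidal.} For a poloidal field $V=-G(t)\,(-t_ze_\rho+t_\rho e_3)$, the curl is azimuthal:
\[
 \curl V=(b_z-c_\rho)e_\theta=\operatorname{div}_{\rho,z}(G\nabla t)\,e_\theta .
\]
Since $|\nabla t|=1/\rho$ and $\Delta_{\rho,z}=\rho^{-2}\Delta_g$ with $\Delta_gt=\coth t$, this equals
\[
 \curl V=\rho^{-2}\,\frac{(\sinh t\,G)'}{\sinh t}\,e_\theta .
\]
Hence $\mathfrak D(V)=A(\widetilde\psi)$ with
\[
 \widetilde\psi=\operatorname{sgn}\bigl((\sinh t\,G)'\bigr)\sqrt{|(\sinh t\,G)'|/\sinh t}.
\]
Membership in $\Dcurlhalf$ is then read off from the radial integrals $e,m$ of Section~\ref{sec:proof-main}. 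For toroidal fields I use \eqref{eq:T-L3-H}--\eqref{eq:T-curl-H}, and for poloidal fields the analogous identities, obtained by the same azimuthal integration: $\|V\|_3^3=2\pi\int|G|^3\sinh t\dd t$ and $\|\curl V\|_{3/2}^{3/2}=2\pi\int|(\sinh t\, G)'|^{3/2}\sinh^{-1/2}t\dd t$.

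\emph{Step 2: the explicit iterates.}
Start from $\psi_0=\tfrac34\sech^2t$, so that $-\psi_0'=\tfrac32\sech^2t\tanh t$ and $u_1$ has $G_1=\sqrt{-\psi_0'}$. Next compute $(\sinh t\,G_1)'$, which gives $\psi_2$ and $u_2=A(\psi_2)$. Then $u_3$ has $G_3=\sqrt{|\psi_2'|}$. Near $t=0$ we have $G_1\sim c\,t^{1/2}$ and $\sinh t\,G_1\sim c\,t^{3/2}$, hence $\psi_2\sim c'\,t^{-1/4}$. This profile lies in $L^3(\sinh t\dd t)$, and $|\psi_2'|^{3/2}\sinh t\sim t^{-7/8}$ is integrable. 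Together with the exponential decay at infinity, this gives $u_2\in\Dcurlhalf$.

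For $u_3$, the mass $2\pi\int|\psi_2'|^{3/2}\sinh t\dd t$ is finite, so $u_3\in L^3$. However $\sinh t\,G_3\sim t^{3/8}$, so $|(\sinh t\, G_3)'|^{3/2}\sinh^{-1/2}t\sim t^{-15/16}\,t^{-1/2}=t^{-23/16}$. This is not integrable at $t=0$, so $\curl u_3\notin L^{3/2}$, which proves \eqref{eq:forward-domain-breakdown}. The singular set is the circle $\{\rho=1,z=0\}$. I also check that any sign change of $(\sinh t\,G_1)'$ or of $\psi_2'$ at positive $t$ produces only integrable singularities in the relevant expressions, so that $t=0$ is the sole obstruction.

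\emph{Step 3: the quotients.}
For $u_0$ and $u_2$, which are toroidal and automatically gauge-minimal by Lemma~\ref{lem:T-gauge}, I use $Q(A(\psi))=2\pi e(\psi)/m(\psi)^{1/2}$. For $u_1$ I would first check that it lies in $\Mman$, i.e.\ $\diver(|u_1|u_1)=0$. By the radial structure, $|u_1|u_1$ is a multiple of $G_1|G_1|\,(-t_ze_\rho+t_\rho e_3)/\rho$, which is a $\rho$-weighted rotated gradient. I expect it to be divergence-free because it is proportional to $\rho^{-1}\nabla^\perp$ of a function of $t$; if not, I would estimate $w(u_1)$ instead. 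Then $Q(u_1)=E(u_1)/M(u_1)^{1/2}$ with both quantities given by one-dimensional integrals.

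Evaluating these integrals via $\eta=\sech t$, as in the proof of Corollary~\ref{cor:explicit-bound}, gives the values \eqref{eq:forward-q0}--\eqref{eq:forward-q2} and the strict inequalities $q_0<q_1<q_2$. I expect the main obstacle to be the closed-form evaluation of $q_2$. The profile $\psi_2$ involves square roots of $\sech^2t\tanh t$-type expressions, and the integrals may only reduce to Beta functions after a substitution such as $\tanh t=\tau$. If no closed form emerges, I would settle the two strict comparisons by rigorous two-sided numerical bounds on the one-dimensional integrals.
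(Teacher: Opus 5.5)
Your treatment of the domain question matches the paper's. Your two transfer rules compose to $\mathfrak D^2(A(\psi))=A(\operatorname{sgn}(F)\sqrt{|F|})$ with $F=-\Delta_{3/2,g}\psi=(\sinh t\,G)'/\sinh t$. The exponents $t^{-7/8}$ and $t^{-23/16}$ at the hyperbolic centre are exactly the ones used there. No sign changes actually occur: $\sinh t\,G_1=\sqrt{3/2}\tanh^{3/2}t$ is increasing, and $\psi_2=(3/2)^{3/4}\sech^{3/2}t\tanh^{-1/4}t$ is strictly decreasing.

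You genuinely differ in the proof of $q_0<q_1<q_2$. You compute the three quotients and compare them numerically; the paper argues structurally. A forward image is automatically gauge-minimal, because $|\mathfrak D(u)|\mathfrak D(u)=\curl u$ is divergence-free. This is the clean reason behind your expectation $u_1\in\Mman$, and no fallback to $w(u_1)$ is needed. Hence $u_j,u_{j+1}\in\Mman$ together with $\curl u_j=|u_{j+1}|u_{j+1}$ give $I(u_{j+1})=u_j$. Lemma~\ref{lem:canonical-inverse-descent} then yields $q_j=Q(I(u_{j+1}))\le q_{j+1}$. Its equality case would force $F_0\propto\psi_0^2$ for $j=0$, or $\psi_2\propto\psi_0$ for $j=1$, and both are false. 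This argument needs no numerics and explains why the forward orbit climbs. The closed forms are then needed only to state the values; they come from $x=\tanh^2t$ and $\int_0^\infty\sech^at\tanh^bt\sinh t\dd t=\frac12\mathrm B(1+\frac b2,\frac{a-1}2)$, with $\mathcal H_0$ an Euler integral. Your route is valid, but it requires rigorous enclosures of $\mathrm B(\frac58,\frac74)$ and $\mathcal H_0$, and the margin in $q_1/q_0$ is under $3\%$.

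If you keep your route, fix two details. First, as written your poloidal identities omit the factor $2\pi$ coming from the hyperbolic angle. The correct radial forms are $\|V\|_3^3=4\pi^2\int_0^\infty|G|^3\sinh t\dd t$ and $\|\curl V\|_{3/2}^{3/2}=4\pi^2\int_0^\infty|(\sinh t\,G)'|^{3/2}\sinh^{-1/2}t\dd t$. If both of your versions are used, the computed $q_1$ drops by $\sqrt{2\pi}$ to about $4.65$, and the first inequality appears reversed; a purely numerical comparison cannot absorb such a slip. Second, your pointwise curl formulas need a distributional justification. The curl of $u_1$ carries no current concentrated on the circle $\{\rho=1,z=0\}$, because the flux $\sinh t\,G_1$ tends to $0$ as $t\to0^+$. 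The conclusion $\curl u_3\notin L^{3/2}$ holds because an $L^{3/2}$ distributional curl would coincide almost everywhere with the pointwise curl computed off that circle.
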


\begin{proof}
The involution in Theorem~\ref{thm:general-p-duality} at $p=3/2$ acts
on solutions; it does
not define a dynamical system on the energy space.  For a toroidal field
with a hyperbolically radial profile, Proposition~\ref{prop:T-PDE-H} gives, whenever the expressions
belong to the required spaces,
\[
 \mathfrak D^2(A(\psi))
 =A\!\left(\operatorname{sgn}(F)\sqrt{\abs F}\right),
 \qquad
 F:=-\Delta_{3/2,g}\psi.
\]
Indeed,
\[
 \curl\mathfrak D(A(\psi))=\frac{F}{\rho^2}e_\theta.
\]

The first three finite quotients are
\begin{align}
 q_0&=\frac{8\sqrt{10}}7\pi
      \approx11.3538151609098,
 \label{eq:forward-q0}\\
 q_1&=
 \pi\sqrt{\frac72}
 \left(\frac32\right)^{3/2}
 \mathrm B\left(\frac58,\frac74\right)
 \approx11.6468871437934,
 \notag\\
 q_2&=
 \frac{\pi}{4\sqrt2}\,
 \frac{\mathcal H_0}
 {\mathrm B(\frac58,\frac74)^{1/2}}
 \approx18.5815053070815,
 \label{eq:forward-q2}
\end{align}
where
\begin{align*}
 \mathcal H_0
 &:={}
 \int_0^1x^{-15/16}(1-x)^{-3/8}(1+5x)^{3/2}\dd x
 \\
 &=
 \mathrm B\left(\frac1{16},\frac58\right)
 {}_2F_1\left(-\frac32,\frac1{16};\frac{11}{16};-5\right).
\end{align*}
In particular $q_0<q_1<q_2$, and $q_3$ is not defined.

For $\psi_0=(3/4)\sech^2t$, a direct radial calculation gives
\[
 F_0:=-\Delta_{3/2,g}\psi_0
 =\left(\frac32\right)^{3/2}
 \sech^3t\,\tanh^{-1/2}t.
\]
Since $\sinh t\sqrt{-\psi_0'(t)}=\sqrt{3/2}\,\tanh^{3/2}t\to0$ as
$t\to0^+$, this formula holds distributionally without a point mass at
the hyperbolic centre.

Consequently,
\[
 u_2=A(\chi_0),
 \qquad
 \chi_0(t):=\sqrt{F_0(t)}
 =\left(\frac32\right)^{3/4}
 \sech^{3/2}t\,\tanh^{-1/4}t.
\]

Put $M_j:=\int_{\R^3}\abs{u_j}^3\dd x$ and
$E_j:=\int_{\R^3}\abs{\curl u_j}^{3/2}\dd x$, so that
$q_j=E_j/M_j^{1/2}$ and $M_{j+1}=E_j$.  In radial hyperbolic
coordinates, the identity
$\int_0^\infty\sech^at\,\tanh^bt\,\sinh t\dd t
 =\frac12\mathrm B\bigl(1+\frac b2,\frac{a-1}2\bigr)$, valid for $a>1$
and $b>-2$, gives $M_0=27\pi^2/80$, $E_0=\frac{8\pi^2}7(3/2)^{3/2}$ and
$E_1=2\pi^2(3/2)^{9/4}\mathrm B(\frac58,\frac74)$, while
$-\chi_0'(t)=\frac14(3/2)^{3/4}\sech^{3/2}t\,\tanh^{-5/4}t\,(1+5\tanh^2t)$
and the substitution $x=\tanh^2t$ give
$E_2=\frac{\pi^2}4(3/2)^{9/8}\mathcal H_0$.  Formulas
\eqref{eq:forward-q0}--\eqref{eq:forward-q2} follow.

Every forward image $u_{j+1}$ which belongs to $L^3$ is automatically in the
nonlinear gauge, because
$\abs{u_{j+1}}u_{j+1}=\curl u_j$.  Since $u_0=A_{\Tclass}$ is also in that
gauge, whenever two consecutive
forward fields belong to the energy space,
\[
 I(\mathfrak D(u_j))=u_j.
\]
Lemma~\ref{lem:canonical-inverse-descent} gives
$Q(u_j)\le Q(u_{j+1})$.  Equality in the first step would require $F_0$ to
be a constant multiple of $\psi_0^2$, which is false.  Equality in the second
would imply, by taking curls and using uniqueness of the toroidal gauge
representative, that $\chi_0$ is a constant multiple of $\psi_0$, which is
also false.  Thus the two inequalities are strict.

It remains to determine the domain of the third image.  As $t\to0^+$,
\[
 \chi_0(t)
 =\left(\frac32\right)^{3/4}t^{-1/4}(1+O(t^2)),
 \qquad
 -\chi_0'(t)
 =\frac14\left(\frac32\right)^{3/4}t^{-5/4}(1+O(t^2)).
\]
Moreover,
\[
 \sinh t\sqrt{-\chi_0'(t)}
 \sim\frac12\left(\frac32\right)^{3/8}t^{3/8}
 \to0.
\]
Thus the flux through small circles around the hyperbolic centre tends to
zero and the distributional $3/2$-Laplacian of $\chi_0$ carries no Dirac
mass there.  Moreover $\chi_0\in L^3(\Hh^2)$ and
\[
 \abs{\chi_0'}^{3/2}\sinh t\asymp t^{-7/8}
\]
is integrable at zero, which confirms $u_2\in\Dcurlhalf$.  On the other hand,
if $F_2:=-\Delta_{3/2,g}\chi_0$, then
\[
 F_2(t)
 =\frac1{\sinh t}\frac{\dd}{\dd t}
 \left(\sinh t\sqrt{-\chi_0'(t)}\right)
 =\frac3{16}\left(\frac32\right)^{3/8}
 t^{-13/8}(1+o(1)).
\]
Consequently,
\[
 \abs{F_2(t)}^{3/2}\sinh t\asymp t^{-23/16},
\]
which is not integrable at zero.
At infinity, $\chi_0(t)=O(\e^{-3t/2})$ and
$F_2(t)=O(\e^{-3t/4})$, so the relevant $L^3$ and curl-energy densities
are integrable there.
If $\curl u_3$ belonged to $L^{3/2}$, it would coincide
almost everywhere with the pointwise curl $\rho^{-2}F_2e_\theta$, whose
$L^{3/2}$ norm is $(2\pi\int_{\Hh^2}\abs{F_2}^{3/2}\dd V_g)^{2/3}=\infty$.
On the other hand, $\int_{\R^3}\abs{u_3}^3\dd x=E_2<\infty$.  This proves
\eqref{eq:forward-domain-breakdown}.
\end{proof}

\section{Numerical values}\label{sec:numerics}

The numerical values in this section are given for orientation.  They are
not certified enclosures and are not used in any proof.

For a decreasing radial solution define the flux
$\mathcal J(t):=\sinh t\sqrt{-\psi_*'(t)}$.  Equation
\eqref{eq:hyperbolic-radial-ODE} becomes the first-order flux system
\[
 \psi_*'(t)=-\Bigl(\frac{\mathcal J(t)}{\sinh t}\Bigr)^2,
 \qquad
 \mathcal J'(t)=\psi_*(t)^2\sinh t,
\]
with $\psi_*(0)=a_*>0$ and $\mathcal J(0)=0$.
The local expansion from Remark~\ref{rem:profile-regularity} gives
$\mathcal J(t)=\frac{a_*^2}{2}t^2+o(t^2)$ and
$\psi_*(t)=a_*-\frac{a_*^4}{12}t^3+o(t^3)$ as $t\to0^+$.
Numerical shooting for the rapidly decaying branch, using fourth-order
Runge--Kutta integration and bisection in the initial value, gives
$a_*=\psi_*(0)\approx2.18885994$.

Let $I_*:=\int_0^\infty\abs{\psi_*'}^{3/2}\sinh t\dd t
=\int_0^\infty\psi_*^3\sinh t\dd t$, where the equality follows by
testing \eqref{eq:hyperbolic-radial-ODE} with $\psi_*$; numerically,
$I_*\approx3.21240637$.
The full hyperbolic integrals contain an additional angular factor
$2\pi$, so Proposition~\ref{prop:T-H-correspondence} gives
$S_{3/2,\curlname}^{\Tclass}=S_{3/2,\curlname}^{\Sclass}
=2\pi\sqrt{I_*}\approx11.26147061$.
The improvement over the original explicit bound is
$\frac{8\sqrt{10}}7\pi-11.2614706054\approx0.0923$, that is, about
$0.8\%$.

The inverse iteration of Theorem~\ref{thm:main-curl}\textup{(c)},
started from $A_{\Tclass}$ and computed by the quadratures
\eqref{eq:inverse-radial-quadrature} and
\eqref{eq:inverse-radial-quotients} on a fine grid, gives the
following values of $q_n=Q(I^n(A_{\Tclass}))$:
\[
\begin{array}{c|ccccccccc}
 n & 0 & 1 & 2 & 3 & 4 & 6 & 8 & 12 & 24\\ \hline
 q_n & 11.35382 & 11.29481 & 11.27676 & 11.26990 & 11.26662
     & 11.26366 & 11.26244 & 11.26167 & 11.26147
\end{array}
\]
The differences $q_n-S_{3/2,\curlname}^{\Tclass}$ decrease eventually by a
factor of about $0.67$ per step, which suggests geometric convergence of the
projective iteration; the convergence theorem does not provide a rate.

\section{Open problems}\label{sec:open-problems}

The main question left open is whether the restricted extremals are
global minimizers.

\begin{conjecture}[Symmetry of global minimizers]
Every global minimizer of $S_{3/2,\curlname}$ is, after a conformal
transformation of $\R^3$ and a change of gauge, a field of class
$\Sclass$, equivalently of class $\Tclass$; in particular
$S_{3/2,\curlname}=S_{3/2,\curlname}^{\Sclass}$ and $U_*$ is a global
ground state.
\end{conjecture}

The evidence is as follows.  The best values known to us are
$Q(A_{\mathrm{LY}})=4\pi\approx12.566$, $Q(A_{\Tclass})\approx11.354$,
and $S_{3/2,\curlname}^{\Tclass}\approx11.261$, and no competitor
outside the two classes with a smaller quotient is known; the
exclusion of the Loss--Yau candidate within the axisymmetric class
\cite{BieganowskiMederski2026} does not decide the question.  On
$\mathbb S^3$ the centred extremals $A_*$ and $U_*$ correspond to
one-forms along the two circle factors of the Hopf tori, so the
conjecture asks whether a global
minimizer can be moved by conformal maps onto a single Hopf circle
factor.

\begin{question}[Symmetry and dual exponents]
	For $p\ne3/2$, which of $S_{p,\curlname}^{\Tclass}$ and
	$S_{p,\curlname}^{\Sclass}$ is smaller?  The duality identities relate
	opposite classes at $p$ and $\widehat p$ and do not determine their
	ordering at a fixed exponent.
	Does either restricted class attain the unrestricted constant at $p=2$ or
	at $p=6/5$?
\end{question}

\begin{question}[Rate and basin of the inverse iteration]
Theorem~\ref{thm:main-curl}\textup{(c)} gives convergence of the
normalized iterates without a rate.  Is the convergence geometric,
that is, are there $C>0$ and $\lambda<1$ such that
$\|v_n-v_\infty^{(n)}\|_{L^3}\le C\lambda^n$ and
$Q(u_n)-S_{3/2,\curlname}^{\Tclass}\le C\lambda^n$,
where $v_\infty^{(n)}$ denotes the limit of the even or of the odd
iterates according to the parity of $n$?  The values in
Section~\ref{sec:numerics} suggest that
$Q(u_n)-S_{3/2,\curlname}^{\Tclass}$ decreases by a factor of about
$0.67$ per step.  The second question concerns the
\emph{basin of attraction} of the two-cycle, that is, the set of
initial fields $u_0$ whose normalized iterates $v_n$ converge to
$\pm T_{\lambda,z_0}A_*/\|A_*\|_{L^3}$ and
$\pm T_{\lambda,z_0}U_*/\|U_*\|_{L^3}$ for some centre. 
\end{question}

\section*{Acknowledgments}
The author was partly supported by the National Science Centre, Poland,\\
grant no.~2023/51/B/ST1/00968.

\section*{AI statement}
The author used AI tools to assist with language editing, the presentation of mathematical arguments developed by the author, literature searches, and the numerical computations reported in Sections ~\ref{subsec:main-c} and~\ref{sec:numerics}. All mathematical arguments and proofs in the final manuscript were written and verified by the author.

\begingroup
\sloppy
\raggedright
\raggedbottom

\endgroup

\end{document}